\def\abs#1{\left|#1\right|}
\def\aff{{\sf aff}}
\def\bH{\mathbf{H}}
\def\bJ{\mathbf{J}}
\def\bX{\mathbf{X}}
\def\bTheta{{\boldsymbol\Theta}}
\def\bx{\mathbf{x}}
\def\by{\mathbf{y}}
\def\bbE{\mathbb{E}}
\def\bbH{\mathbb{H}}
\def\bbN{\mathbb{N}}
\def\bbR{\mathbb{R}}
\def\bbS{\mathbb{S}}
\def\cA{\mathcal{A}}
\def\cC{\mathcal{C}}
\def\cE{\mathcal{E}}
\def\cH{\mathcal{H}}
\def\cI{\mathcal{I}}
\def\cO{\mathcal{O}}
\def\cP{\mathcal{P}}
\def\cS{\mathcal{S}}
\def\cT{\mathcal{T}}
\def\cU{\mathcal{U}}
\def\cV{\mathcal{V}}
\def\cX{\mathcal{X}}
\def\conv{{\sf conv}}
\def\KL{{\sf KL}}
\def\extr{{\sf extr }}
\def\norm#1{\left\|#1\right\|}
\def\TV{{\sf TV}}
\newcommand{\innerprod}[1]{\left\langle#1\right\rangle}
\def\doc{\bX_{[n]}}
\def\corpus{\doc^{[m]}}
\def\bTheta{\boldsymbol{\Theta}}
\newtheorem{theorem}{Theorem}
\newtheorem{lemma}{Lemma}[section]
\newtheorem{proposition}{Proposition}
\newtheorem{definition}{Definition}[section]
\newtheorem{corollary}{Corollary}
\theoremstyle{remark}
\newtheorem{example}{Example}
\newtheorem{remark}{Remark}
\title{Learning Topic Hierarchies by Tree-Directed Latent Variable Models
}
\author{
  Sunrit Chakraborty$^*$, Rayleigh Lei$^*$, XuanLong Nguyen \\
  Department of Statistics \\
  University of Michigan \\
  Ann Arbor, MI - 48105\\
  \texttt{\{sunritc, rayleigh, xuanlong\}@umich.edu}
}
\begin{document}
\maketitle

\begin{abstract}
    We study a parametric family of latent variable models, namely topic models, equipped with a hierarchical structure among the topic variables. 
Such models may be viewed as a finite mixture of the latent Dirichlet allocation (LDA) induced distributions, but the LDA components are constrained by a latent hierarchy, specifically a rooted and directed tree structure, which enables the learning of interpretable and latent topic hierarchies of interest. A mathematical framework is developed in order to establish identifiability of the latent topic hierarchy under suitable regularity conditions, and to derive bounds for posterior contraction rates of the model and its parameters.  We demonstrate the usefulness of such models and validate its theoretical properties through a careful simulation study and a real data example using the New York Times articles.
\end{abstract}

\begin{keywords}
    topic model, 
    Latent Dirichlet Allocation,
    topic hierarchy,
    directed tree,
    identifiability,
    inverse bound,
    consistency,
    contraction rate 
\end{keywords}

\tableofcontents

\section{Introduction}\label{sec:introduction}
Topic models have widely been used to analyze text corpora with the goal of discovering abstract \textit{topics} that occur in the collection of documents \citep{BleiEtAlLatentDirichletAllocation2003,BleiLaffertyDynamicTopicModels2006}. A topic is a probability distribution over a vocabulary, representing a particular hidden semantic structure in the corpus. A document may be associated with multiple topics present in different proportions. Understanding such latent semantics enables automated categorization and tagging, which is required by organizations handling a large number of unstructured text documents. Apart from text corpora, the same modelling framework was developed in population genetics \citep{PritchardEtAlInferencePopulationStructure2000} for identifying ancestral population structure from gene samples, and also deployed in a vast range of domains as diverse as quantitative biomedicine (e.g., extracting information of cancers' genomic samples \citep{valle2020topic}) and audio analysis (e.g., understanding hidden structures in music and how music styles evolved over time \citep{shalit2013modeling}). 

Topic models may be broadly classified into two types -- while non-probabilistic models such as 
Non-negative Matrix Factorization \citep{6634167} and Latent Semantic Analysis \citep{8455018} focus on low-rank decomposition of the document-proportion matrix, probabilistic models such as Latent Dirichlet Allocation \citep{BleiEtAlLatentDirichletAllocation2003} and Pachinko Allocation \citep{li2006pachinko} employ a probabilistic generative model for the data using latent variables. In general, mixture models represent the simplest instance of models with latent variables, and provide a basic modelling framework for text corpora -- e.g., mixture of product multinomial distributions is a choice for a text data generating model, where the components represent the topics. However, topic models are considerably richer than the basic mixture models in that different documents contain topics in possibly different proportions, whereas in mixture models, each document has the same mixture probabilities for different components. As a result, in topic models, the mixture probabilities are different across documents -- this structure is often referred to as admixture.


Latent Dirichlet Allocation (LDA) \citep{BleiEtAlLatentDirichletAllocation2003} is the most popular representative of topic modeling and is in fact a building block of this paper. 
In a nutshell, the LDA assumes there are $K$ topics in the corpus. Document specific mixture probabilities (or \textit{allocation}) are assumed to follow a Dirichlet distribution. Given the allocation for a document, words are conditionally i.i.d. from a mixture of multinomial distributions, with parameters given by the topics themselves. Thus, each document uses all the topics. The LDA model has been extended in several directions, for instance, Dynamic Topic Models for temporal topic models \citep{BleiLaffertyDynamicTopicModels2006}, Correlated Topic Models for allowing correlation among the topics \citep{blei2006correlated}, non-parametric LDA using a Hierarchical Dirichlet Process prior over the topics \citep{TehEtAlHierarchicalDirichletProcesses2006} and Gaussian LDA for continuous observations, replacing the multinomial kernel by a Gaussian kernel \citep{das2015gaussian}.

\begin{figure}
    \centering
    \includegraphics[width=0.98\linewidth]{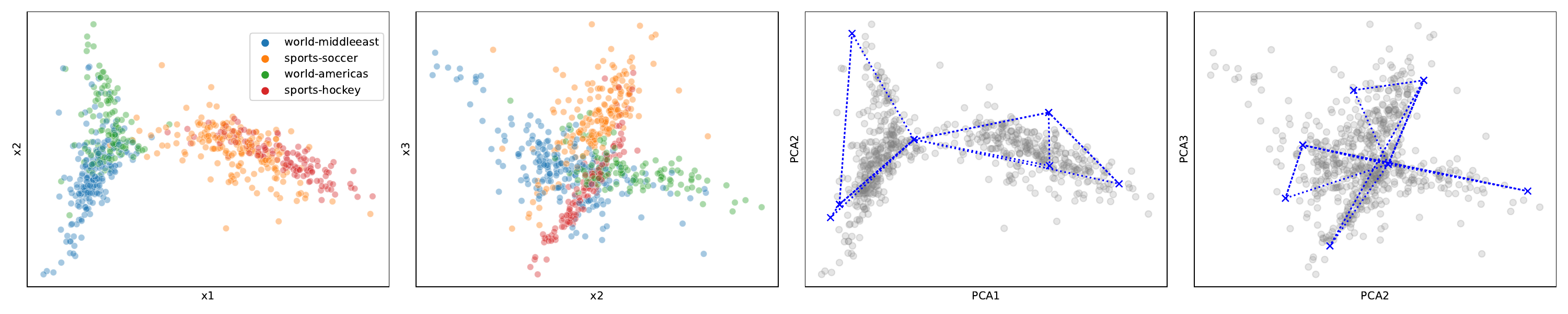}
    \caption{Illustration of a corpus (sample of 2016 New York Times articles) which demonstrates that one topic polytope is insufficient for explaining the heterogeneity in the corpus. Details about the data are in Section \ref{sec:numerical exp}. The right two pictures show estimated component topic polytopes using a tree-directed topic model (the model fitting does \emph{not} take the labeled news categories into account).}
    \label{fig: nyt example intro}
\end{figure}

Despite its versatility and adoption across a vast range of domains, the LDA model is not quite suitable for many data examples. A geometric illustration of this assertion can be seen in Figure \ref{fig: nyt example intro}. As will be elaborated in Section \ref{sec:background}, the LDA model assumes that the document means lie on a single polytope, the convex hull of the topics. This polytope is commonly called \textit{topic simplex} \citep{BleiEtAlLatentDirichletAllocation2003}, and more generally \textit{population polytope} \citep{NguyenPosteriorContractionPopulation2015}. However, a single such topic polytope might not fully explain the latent structure in the corpus. In Figure \ref{fig: nyt example intro}, the left two figures are the 2-dimensional PCA projections of the documents (color indicating category) of a subset of the 2016 New York Times corpus, where each point represents a particular document (the document-wise mean proportion of words). The figure shows that there is clearly a more complex structure in the corpus which may be not captured by using a single polytope. This motivates a "mixture of topic polytopes" approach pursued in this work. Moreover, the mixture components carry additional structures. For instance, although the \textit{sports} and \textit{world} categories roughly lie on disjoint polytopes, these two polytopes should share at least one vertex in the middle, which captures words common to both of these \textit{topics}. Furthermore, within \textit{sports}, the soccer (shown in orange) and hockey (shown in red) documents also share some topics, which are unique to sports. In addition, some have topics unique to themselves. This motivates us to use topic polytopes sharing vertices (or lower dimensional faces, using the language of convex geometry) to model the topics. Note that when fitting the topic models, we did \emph{not} take the categories into consideration. The visualization in Figure \ref{fig: nyt example intro} is only to motivate the need of more complex topic models. The two right images in the figure represent the unlabelled documents along with learnt topic polytopes (shown with dotted blue edges) using our model, which exhibits a finer understanding of the organization of topics in this corpus, and demonstrates that such learning from unlabelled data is possible in practice.

A different and interesting perspective of topic modeling extensions came from the original proposal of \cite{BleiEtAlNestedChineseRestaurant2010}, which focused on the learning of potential latent hierarchies among the topics using a non-parametric Bayesian prior distribution on (infinitely branched) random tree structures. 
These authors were motivated by a notion of hierarchy among topics in which some topics may be more significant than others, and proposed to use a tree structure to represent such a hierarchy among the topics.
As an example with respect to the data used for Figure \ref{fig: nyt example intro}, \textit{soccer} and \textit{hockey} might be naturally considered as sub-topics of the topic \textit{sports}. Using the tree-theoretic language, soccer and hockey may be represented as children of the vertex sport. Thus,
tree structures are natural and economical means for representing the relationship among topics. A probabilistic model which incorporates such a latent topic hierarchy can lead to not only a better understanding of the semantics of topics, thereby improving their interpretability, but also facilitates text categorization and reduces the strong reliance on pre-processing, as required by most existing topic models. 


The primary goal of this paper is to study whether such a latent and abstract topic hierarchy can be learned from unlabelled data and how efficiently. To this end, we investigate the connection between topics arranged in a rooted tree structure in the sense of \cite{BleiEtAlNestedChineseRestaurant2010} and the geometric perspective of multiple topic polytopes sharing vertices described earlier. We ask, in particular, \textit{is the latent topic hierarchy even identifiable to begin with}? This question remains unexplored, to the best of our knowledge. It is quite challenging, even when it is relatively simple to specify a latent structure of interest, because the relationship between the latent quantities and observed data is not very transparent in the associated statistical models. Understanding identifiability in such complex latent variable models is crucial for meaningful estimation of parameters (topics arranged in a tree for example), and for obtaining the learning rates. 
The approach taken in our work is to understand the geometry of the latent topic hierarchies arising from the sharing of topics across documents, and to establish conditions under which such a latent abstract topic structure is identifiable and can be learnt consistently from the corpus.

To address the theoretical issues associated with the learning of topic hierarchies, we focus on a class of probabilistic topic models, namely \textit{tree-directed topic models}, which highlight the setting that the topic hierarchy be represented as a \emph{finite} and \emph{rooted tree} structure. 
Although our topic modeling formulation is inspired by the nonparametric modeling of \cite{BleiEtAlNestedChineseRestaurant2010}, its parametric character is relatively simpler to enable us to address rigorously the question whether such a tree-based hierarchical structure in the latent topics can be identified and learnable from the data. The tree-directed topic model is still quite complex to be a good representative of the broad class of latent structured topic models of interest.
The finite tree assumption allows the question on identifiability and topic hierarchy interpretability to be addressed satisfactorily. It is not too restrictive in practice, as most computational techniques employ parametric approximation during some steps, and interpretability of the topic hierarchy makes practical sense only for a finite number of components. 
The \emph{rooted} tree structure allows for economical and intuitive interpretations and a clean theoretical treatment, although such a constraint can be relaxed in a future work.

An instructive view of the tree-directed topic model studied in this paper is that of a mixture of the LDA-induced components, but with a distinction: the unobserved tree structured topic hierarchy imposes certain restrictions on how the topics associated to each component of this mixture are shared. Identifying and learning such a latent tree structured topic hierarchy consists of two aspects: (i) identifying and learning of the rooted tree structure and (ii) obtaining a topic map, a fundamental notion that we will define formally in the sequel. Such a topic map embeds the abstract tree structure into a space of "meanings", i.e., a collection of topics represented as probability distributions over the vocabulary, along with constraints on the sharing structure. It is the marriage of the rooted tree structures with the topic semantic formalism that constitutes the heart of this work, and requires a major portion of our technical effort.

Summarizing, in this paper we shall present the following contributions.
\begin{itemize}
    \item [(i)] Formulation of a class of rich and computationally tractable topic models, namely tree-directed topic models, which incorporates identifiable and interpretable notions of topic hierarchy.
    \item [(ii)] An identifiability theory for the latent topic hierarchy arising in the tree-directed topic models, under the large document size regime and other suitable regularity conditions.
    \item [(iii)] Establishing bounds on the posterior contraction rates for the latent variables representing the topic structure arising from the empirical data. 
\end{itemize}
We additionally complement our model and theoretical study with a collapsed Gibbs sampler-based inference algorithm and extensive simulation and a real data analysis based on New York Times corpus. It will be shown that our model can capture interesting semantic structures in the corpus. Our work establishes that such complex latent structures can indeed be learnt from data under suitable regularity conditions, and under a well-specified model setting, tools developed here can be used to provide bounds on the learning rates of the parameters of interest. For this theory, we adopt the general framework for the Bayesian asymptotics treatment \citep{GhosalVaartFundamentalsNonparametricBayesian2017}, while the considerable novelty lies in extending the approach pursued in \cite{NguyenPosteriorContractionPopulation2015} for a single convex polytope to the setting of a mixture of convex polytopes.

The remainder of the paper is organized as follows. Section \ref{sec:background} provides the necessary background on topic modeling and graph-theoretic formalism that will be crucial in the rest of the paper. The tree-directed topic models will be introduced in 
Section \ref{sec:model}, along with examples and geometric structures of the distributions such models induce. Section \ref{sec:identifiability} investigates questions of identifiability. Section \ref{sec:rate} presents results on the model's posterior contraction behavior. We provide simulation studies and numerical experiments with real data in Section \ref{sec:numerical exp}. Concluding remarks are given in Section \ref{sec:conclude}. Complete proofs are relegated to the Appendix.

\subsection*{Notations:} $\bbR^d$ represents the $d-$dimensional Euclidean space and $\Delta^{d-1}=\{\bx\in \bbR^d\mid x_i\geq 0\, \forall i,\sum_i x_i=1\}$ represents the $(d-1)-$dimensional probability simplex. We denote the set $\{1,2,\dots,n\}$ by $[n]$. For a set of points $A\subset\bbR^d$, denote the convex hull of $A$ by $\conv A$ and affine hull by $\aff A$. We use the term \textit{simplex} to denote the convex hull of $K$ affinely independent points and each of these points are called \textit{extreme points} of the simplex. We also use \textit{polytope} to refer to a generic convex polytope $\cS$, i.e. convex hull of finitely many points and \textit{extreme points} of a polytope refer to its vertices, denoted as $\extr \cS$. We recall the dimension of a polytope to be the dimension of its affine space, defined as the dimension of the unique linear subspace that is a translation of the affine space. For a tuple $A=(a_1,\dots,a_n)$, we use the notation $\text{set}(A)$ to indicate the multiset (unordered collection of objects with repetitions) containing the elements in the tuple.

For $\bx,\by\in\bbR^d$, $\norm{\bx-\by}$ denotes the $L_2$-norm (aka Euclidean norm) and $\norm{\bx-\by}_1$ denotes the $L_1$-norm. We denote the Euclidean ball centered at $x$ of radius $r$ as $B(x,r):=\{y\in \bbR^d: \norm{y-x}_2 \leq r\}$ (the ambient space is clear from context). The notation $B_p(x,r)$ indicates an Euclidean ball of dimension $p$, i.e. $B(x,r)\cap \cA$ for some $p-$dimensional affine space $\cA$ (in this case we mention the supporting affine space on which this ball lies). For probability measures $P, Q$ on $\bbR^d$, we shall denote the total variation distance, Kullback-Liebler divergence, Hellinger distance and $q-$Wasserstein distance as $d_{\TV}(P,Q)$, $\KL (P\vert Q)$, $h(P,Q)$ and $W_q(P,Q)$ respectively. We shall mostly denote a probability measure on $\bbR^d$ by an upper-case letter , e.g. $P$ and its density by the corresponding lower-case letter, e.g. $p$. If nothing else is mentioned, the density is with respect to the Lebesgue measure on $\bbR^d$. For a measureable function $f:\bbR^d\to\bbR$, denote $Pf := \int f(x) P(dx)$. Finally, for non-empty $A,B\subset \bbR^d$,  the Hausdorff metric between them is denoted by $d_{\cH}(A,B)$. For probability distributions, let $\text{Mult}(n,p)$ denote the multinomial distribution, $\text{Cat}(p)$ the categorical distribution and $\text{Dir}_K(\alpha)$ the symmetric Dirichlet distribution on $\Delta^{K-1}$ with parameter $\alpha$.

\section{Preliminaries}\label{sec:background}
In this section, we provide the necessary background on topic modeling and rooted tree formalism. Our purpose is two-fold. First, we introduce the existing ideas and relevant results in the topic modeling literature. Second, we set up further notations and the setting based on which we shall define and study our model from the following section. In particular, we recollect some notions from graph theory regarding tree and isomorphisms, that will be used extensively in the sequel.

The data corpus consists of $m$ documents, $\bX_1,\dots,\bX_m$, where each document contains $n$ words, $\bX_i=(x_{i1},\dots,x_{in})$. The words in the corpus belong to a fixed vocabulary, say $[V]$. We shall use the same notations when introducing our model in Section \ref{sec:model}.

\subsection{Latent Dirichlet Allocation}
The LDA model provides a probabilistic generative model for the corpus. It posits that there are $K$ topics, specifically, $\theta_1,\dots,\theta_K\in\Delta^{V-1}$, and each document carries all these topics in varying proportions. For each document indexed by $i=1,\ldots, m$, first a document-specific allocation (proportion) $\beta_i\in\Delta^{K-1}$ is generated (the LDA assumes $\beta_i \sim \text{Dir}_K(\alpha)$ for some $\alpha\in\bbR_+^K$ i.i.d. across documents) and then conditionally given the $\beta_i$, words in the document are i.i.d. from the categorical distribution with parameter $\Theta^\top\beta$, where $\Theta\in\bbR^{K\times V}$, obtained by stacking the topics as rows, i.e., for $i\in [m], j\in [n], v \in [V]$,
$$p(x_{ij}=v|\theta,\beta_i) = \sum_k \beta_{ik}\theta_{kv}.$$
It is more common to express the last line equivalently in terms of the latent discrete topic assignment variable, which facilitates designing Gibbs sampler based inference methods:
\begin{align*}
    z_{ij}|\beta_i &\sim \text{Cat}(\beta_i) \\
    x_{ij}|\Theta, z_{ij}=k &\sim \text{Cat}(\theta_k).
\end{align*}
From the above model specification, the joint distribution of a single document can be expressed as
\begin{align*}
    p_{\text{LDA}}(\bx|\Theta,\alpha) &= \int_{\Delta^{K-1}} \prod_{j=1}^n \prod_{v\in[V]}\left(\sum_k \beta_{k} \theta_{kv}\right)^{1(x_j=v)} \text{Dir}_{K,\alpha}(d\beta) \\
    &= \int_{\Delta^{K-1}} \prod_{v\in[V]} \innerprod{\theta_{\cdot v}, \beta}^{m_v} \text{Dir}_{K,\alpha}(d\beta),
\end{align*}
where $\theta_{\cdot v} = (\theta_{1,v}, \dots, \theta_{K,v})^\top$ and $m_v=\sum_j 1(x_j=v)$ is the number of times word $v$ appear in the document. We shall write $\bX\sim \text{LDA}(\Theta,\alpha)$ to denote that a document $\bX$ follows the above distribution. Let us write $\cS=\conv(\Theta)$ to be the topic simplex. The Dirichlet allocation induces a pushforward measure $G = L_{\#} \text{Dir}_{K,\alpha}$, where $L:\Delta^{K-1}\to \cS$ by $L(\beta) = \sum_k \beta_k\theta_k$. In terms of this measure, the joint distribution of a document can be expressed as
$$p_{\text{LDA}}(\bx|\Theta,\alpha) = \int_{\cS} \prod_{v\in[V]} \eta_v^{m_v} G(d\eta).$$
This can be visualized as follows: the Dirichlet distribution induces a probability measure on the topic simplex $\cS$, a document first chooses $\eta\sim G$ and then draws words as conditionally i.i.d. observations from a categorical distribution with parameter $\eta$. As $n\to \infty$ (document size increases), the distribution of $\hat{\eta}$, defined as $\hat{\eta}_v = m_v/n$, converges to $G$ in probability. Furthermore, based on the above display, $G$ may be viewed as the latent mixing measure for the LDA, borrowing the notion from mixture models: $p(x)=\sum_k \pi_k f(x|\theta_k)$, where $f$ is some probability kernel and the mixing measure is $G=\sum_k \pi_k\delta_{\theta_k}$ so that $p(x)=\int f(x|\eta)G(d\eta)$ -- see \cite{nguyen2013convergence,ho2016strong} for details. Note that $G$ is discrete and finitely supported for mixture models and the choice of kernel being multinomial.

The LDA has been studied extensively in recent years -- identifiability has been studied using various tools and estimation rates have been proved under various assumptions. In the non-degenerate case assuming linear independence of $\{\theta_k\}$, \cite{anandkumar2012spectral} applied the method of moments with tensor decomposition techniques to obtain parametric rates for recovering underlying topics -- they also established that under linear independent, $n=3$ words per document is enough for identifiability, if $\bar{\alpha}=\sum_k \alpha_k$ is known. Under stronger `anchor word' type assumptions, \cite{arora2012learning} developed algorithms beyond spectral decomposition of empirical tensors. However, if the frequency of anchor words was not that common, \cite{ke2024using} were able to improve on this rate with a singular value decomposition based technique. Meanwhile, under similar assumptions, \cite{bing2020fast} developed estimators and established minimax lower rates for these estimators if the topics were dense and the number of topics were unknown  or if the topic models were sparse \citep{bing2020optimal}. As an alternative approach, \cite{chen2023learning} relaxed the separability condition. Instead, they developed an estimator based on implicitly minimizing the volume of the polytope. Assuming the data is sufficiently scattered, the model is identified regardless of the number of topics or vocabulary size. Under a Bayesian perspective, \cite{NguyenPosteriorContractionPopulation2015} establish a $\left(\frac{\log n}{n} + \frac{\log m}{m} + \frac{\log m}{n}\right)^{-1/2}$ rate for estimating the topic polytope under the Hausdorff metric (thus, only topics which are extreme points of the convex hull are identified) - this approach, while requiring $n$ to increase, requires milder assumptions on the topics and applies even if the underlying Dirichlet distribution is replaced by any smooth probability distribution on $\Delta^{K-1}$, placing sufficient mass near boundary ($\alpha-$regularity). Meanwhile, \cite{lijoi2023flexible} provided theoretical insight on this type of model from the Bayesian nonparametric perspective. In contrast to the `increasing $n$' regime in \cite{NguyenPosteriorContractionPopulation2015, TangEtAlUnderstandingLimitingFactors2014}, the work in \cite{WangConvergenceRatesLatent2019} considered a `fixed $n$' setting, and studied a slightly weaker notion of finite identifiability in the LDA and established parametric rate for MLE of the topics, under the knowledge of the underlying Dirichlet distribution (their result applies if Dirichlet distribution is replaced by any symmetric probability measure $\nu_0$ on $\Delta^{K-1}$ satisfying a mild moment condition). The optimal achievable rate for estimation of all topics in the LDA with unknown $\alpha$ in terms of $m$ and $n$ (as conjectured in \cite{WangConvergenceRatesLatent2019}, the actual behavior of the MLE should be a combination of both $m$ and $n$ -- higher $n$ might deliver faster rates) is still an open question.

Algorithms for inference in topic models also have a rich literature. Variational inference was proposed for the LDA in \cite{BleiEtAlLatentDirichletAllocation2003}, and in an online fashion \citep{hoffman2010online}, however collapsed Gibbs sampler \citep{griffiths2004finding} and spectral decomposition methods \citep{anandkumar2012spectral} have also been used extensively. Furthermore, the geometric structure of the LDA has been exploited to provide much faster inference algorithms such as Voronoi Latent Admixture (VLAD) \citep{YurochkinEtAlDirichletSimplexNest2019} and Geometric Dirichlet Means (GDM) \citep{YurochkinNguyenGeometricDirichletMeans2016}, which are shown to be consistent under certain conditions. For the nonparametric topic model in \cite{BleiEtAlNestedChineseRestaurant2010}, a collapsed Gibbs sampling based inference method was proposed. On the other hand, while much faster, variational inference methods for non-parametric LDA \citep{wang2009variational,wang2011online} do not come with many theoretical guarantees. 


\subsection{Trees and Isomorphisms}
We assume familiarity with basic notions from graph theory, such as nodes/vertices, edges, directed/undirected graphs, degree, trail, cycle, tree (see, e.g., \cite{harary1969graph}). Now, we define a \textit{directed rooted tree} (DRT) as follows, which will serve in this paper as the fundamental structure controlling the topic hierarchy.

\begin{definition}
    A directed rooted tree $\cT$ is a directed graph $(\cV,\cE)$ such that the underlying undirected graph is a tree and the directed graph has a unique node $v_0$ (to be called root), with inward-degree 0. 
\end{definition}
A DRT will be denoted by $\cT=(\cV,\cE,v_0)$. We remark that this tree is an abstract graph and has no connection at this point to the data corpus. Without loss of generality, we can take $\cV=[K]$, where $K=|\cV|$ is the number of nodes and we shall use this in the remainder of the paper. Define a \textit{maximal path} of $\cT$ as $\varphi=(v_0,v_1,\dots,v_{L-1})$, where $(v_i,v_{i+1})\in\cE$ and $v_L$ is a leaf of $\cT$, i.e., any path of the tree starting at the root and ending at a leaf (nodes with 0 outward degree) -- we shall simply call these \textit{paths} by slight abuse of terminology. For such a path $\varphi$, we say $L$ is its \textit{length}. Let $I$ be the number of leaves of $\cT$, then clearly we have $I$ paths -- let $J_1,\dots,J_I$ be the lengths of these paths. Define the \textit{size} of $\cT$ as the tuple $(I,\bJ, K)$, where $\bJ=\text{set}(J_1,\dots,J_I)$ and referred to as \textit{depth set} of $\cT$. Let $\Phi(\cT)$ denote the set of all paths of $\cT$. Finally, for any function $f:\cV\to \Omega$ (for any arbitrary set $\Omega$, e.g., $\bbR$) and a path $\varphi=(v_0,\dots,v_{L-1})\in \Phi(\cT)$, we write $f(\varphi):=(f(v_0),f(v_1),\dots,f(v_{L-1}))\in \Omega^L$. Next, we state the definition of graph-isomorphism, specifically for DRTs. 

 \begin{definition}
     $\cT=(\cV,\cE,v_0)$ and $\cT'=(\cV', \cE', v_0')$ are isomorphic if and only if there exists a bijection $\sigma: \cV\to\cV'$ such that 
     $$(u,v)\in \cE \iff (\sigma(u), \sigma(v)) \in \cE'.$$
 \end{definition}
If $\cT$ and $\cT'$ are isomorphic, we write $\cT\cong \cT'$. Notice that the definition implies that $\sigma(v_0)=v_0'$ and that any such bijection (if exists) must map a leaf of $\cT$ to a leaf of $\cT'$. We say that DRTs $\cT$ and $\cT'$ have the same \textit{structure} if they are isomorphic. Since isomorphism is an equivalence relation on the space of DRTs, we wish to think of the equivalence classes as representing a particular `tree structure'. We also note that for $\cT\cong \cT'$, the bijection is typically not unique. 

Our probabilistic model, to be defined in the following section, would associate each document to a particular path of the DRT and then, that document would be following a latent Dirichlet allocation model using topics only from that path. While this will be made more precise in the next section, we make a quick remark that the paths of the DRT play important role in our model. Two paths might share a few nodes, meaning that topics associated with these nodes are shared between documents that are assigned to these paths. This topic-sharing structure as induced by the DRT is connected solely to the structure of the DRT. In particular, we wish to identify the tree structure solely from the collection of sets of nodes from each path. The following lemma shows that two non-isomorphic DRTs cannot give rise to the same collection of paths. The proof of this result is deferred to the Appendix, where we also provide further details about DRTs.

\begin{lemma}\label{lemma:tree}
    Given two DRTs $\cT, \cT'$ with the same $\cV$, we have
    $$\{\text{set}(\varphi)\mid \varphi \text{ is a path in }\cT\} = \{\text{set}(\varphi')\mid \varphi' \text{ is a path in }\cT'\} \iff \cT \cong \cT'.$$
    Moreover, $\cT=\cT'$ if additionally, either $\cT$ or $\cT'$ satisfies the following: every element $v$ in $V$ except the leaves has at least 2 children.
\end{lemma}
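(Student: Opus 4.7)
The plan is to prove the non-trivial $(\Rightarrow)$ direction --- that equality of the path-set collections of two DRTs on the same vertex set forces $\cT\cong\cT'$ --- by strong induction on $|\cV|$, and to strengthen the same induction to obtain $\cT=\cT'$ under the branching condition (the \emph{moreover} claim). The converse $(\Leftarrow)$ is routine: any root-preserving edge bijection carries maximal paths bijectively to maximal paths, and the two collections transport into one another via the isomorphism $\sigma$.

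The central idea is to reconstruct the rooted tree from the path-set collection $\mathcal{F}$ alone. First, I would identify the common prefix $C := \bigcap_{S\in\mathcal{F}} S$: a vertex $v$ lies in every maximal path iff every leaf lies in the subtree rooted at $v$, so $C$ is precisely the chain from $v_0$ down to the first vertex with $\geq 2$ children (or down to the unique leaf, if $\cT$ is a single chain). Since $C$ is computable from $\mathcal{F}$, it coincides for $\cT$ and $\cT'$. Next, decompose $\cV\setminus C$ into subtree vertex sets: each residual $S\setminus C$ with $|S|>|C|$ is a path node set within a subtree hanging off the last vertex of $C$, and two such residuals share a vertex iff they come from the same subtree. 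Thus the connected components of the intersection graph on $\{S\setminus C:S\in\mathcal{F}\}$ yield a partition $\cV\setminus C=\cV_1\sqcup\cdots\sqcup\cV_r$ that depends only on $\mathcal{F}$ and hence matches for both trees. Strong induction applied to each sub-DRT on $\cV_i$ (whose path-set collection is the restriction of $\mathcal{F}$ to $\cV_i$ after stripping $C$) yields an isomorphism $\sigma_i:\cV_i\to\cV_i$. To assemble the global $\sigma:\cV\to\cV$, take $\sigma|_{\cV_i}=\sigma_i$ and let $\sigma|_C$ be any chain-order-respecting bijection (both DRTs linearly order the same $|C|$-element set $C$, so such a bijection exists). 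Edge preservation then splits into three routine cases: intra-chain edges, intra-subtree edges (handled by $\sigma_i$), and the edges linking the terminal vertex of $C$ to each subtree root --- the last being preserved because $\sigma_i$ sends the root of $\cT|_{\cV_i}$ (identifiable as the unique vertex of $\cV_i$ contained in every restricted path set) to the root of $\cT'|_{\cV_i}$. For the \emph{moreover} part, the branching condition forces $C=\{v_0\}$ (so $v_0=v_0'$) and is inherited by every subtree; the strengthened induction then gives equality $\cT|_{\cV_i}=\cT'|_{\cV_i}$ within each piece, and hence $\cT=\cT'$.

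The main obstacle I anticipate is handling the chain $C$ carefully: without the branching condition its internal vertices may be permuted freely while leaving $\mathcal{F}$ unchanged, which is precisely why only $\cong$ (and not equality) can be recovered in general. Verifying that $C$ is always a chain in both $\cT$ and $\cT'$ --- which relies on the rooted-tree hypothesis and on the fact that common ancestors of all leaves form a totally ordered set --- and producing the chain-order bijection needed in the assembly step are the places where the hypotheses do their real work.
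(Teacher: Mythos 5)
Your proof is correct and takes a genuinely different route from the paper's. The paper proceeds by a single-pass greedy construction: starting from the root (any element of $\bigcap_i C_i$), it repeatedly attaches new children chosen by a ``maximal count among subsets with a given history'' rule, and then argues (via the membership function $\mathfrak{m}$ and a careful two-step case analysis) that all the ambiguities in this greedy process correspond to automorphisms within chains, so the output is unique up to isomorphism. You instead use a top-down divide-and-conquer: first extract the trunk $C=\bigcap\mathcal{F}$ (same set for both trees since it is defined from $\mathcal{F}$), then recover the partition $\cV\setminus C=\cV_1\sqcup\cdots\sqcup\cV_r$ as connected components of the intersection graph on residuals $\{S\setminus C\}$, recurse on each $\cV_i$, and glue the pieces via the unique order-preserving bijection on $C$. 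The two strategies buy different things: the paper's one-pass construction is closer to an explicit reconstruction algorithm, whereas your recursion isolates the structural reason the isomorphism exists (the trunk is canonical, the subtree partition is canonical) and makes the ``moreover'' part nearly automatic --- under the branching hypothesis $C$ degenerates to $\{v_0\}$, so $v_0=v_0'$, the hypothesis is inherited by each $\cT|_{\cV_i}$, and equality propagates by the strengthened induction, whereas the paper re-runs its whole greedy analysis and argues there is no ambiguity left. Your approach also makes the base cases cleaner (either $|\cV|=1$, or $C=\cV$ and both trees are chains related by the unique order bijection, with $r\ge 2$ in the recursive case following since the deepest element of $C$ must branch).

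One small misstatement to fix: you describe the subtree root $u_i$ as ``the unique vertex of $\cV_i$ contained in every restricted path set.'' That is false when $\cT|_{\cV_i}$ itself begins with a trunk of length $\ge 2$; the set of such vertices is that whole sub-trunk, not a singleton. The claim you actually need --- that $\sigma_i$ maps the root of $\cT|_{\cV_i}$ to the root of $\cT'|_{\cV_i}$ --- is true for a different reason, namely that every DRT isomorphism fixes roots (the paper records this immediately after its definition of isomorphism). Replacing the parenthetical justification with that observation makes the linking-edge step airtight.
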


\section{Tree-Directed Topic Models}\label{sec:model}
\subsection{Model formulation}
Although it is simple to describe the probabilistic model rather quickly, explicating their likelihood structure and the underlying geometry requires some effort. In particular, to incorporate the DRT structure into the probabilistic model, we need to following concept.

\begin{definition}
    Given a DRT, $\cT=(\cV,\cE,v_0)$, any injection map, $\rho:\cV\to \Delta^{V-1}$, is called a topic map on $\cT$.
\end{definition}

Given a DRT, $\cT=(\cV,\cE,v_0)$, of size $(I,\bJ,K)$ and a topic map $\rho$ on it, we are ready to describe the model. Denote a generic document with $n$ words by $\bX_{[n]}$ and the corpus with $m$ such documents by $\bX^{[m]}_{[n]}$, with $X_{\ell j}, \ell\in[m], j\in[n]$ representing the $j$th word in the $i$th document. Fix an enumeration of the paths of $\cT$, say $\varphi_1,\dots,\varphi_I$, and let $\pi:\Phi(\cT)\to[0,1]$ be such that $\pi(\varphi_i)>0$ for all $i\in[I]$ and $\sum_i \pi(\varphi_i)=1$. Thus, if we abuse the notation to take $\pi_i=\pi(\varphi_i)$ and $\pi=(\pi_1,\dots,\pi_I)^\top\in\bbR^I$, then $\pi\in\Delta^{I-1}$ represents the path probability parameter. First, for each document, a path of $\cT$ is chosen according to $\pi$, i.e.,
$$c_1,\dots,c_m\overset{iid}{\sim} \text{Cat}(\pi).$$
Here, $c_\ell\in[I]$ is a discrete random variable indicating the path chosen for document $\bX_\ell$. In other words, $c_\ell=i$ suggests that the path $\varphi_i$ is chosen for the $\ell-$th document. Now, given the path associated with a document, words in that document are generated in the same way as the LDA, using only topics associated with that path under the topic map $\rho$. More precisely, let $\Theta_i = \rho(\varphi_i) \in \bbR^{J_i\times V}$ such that the rows are the topics associated to the nodes along this path. Recall that $J_i$ denotes the length of path $\varphi_i$ and $\alpha_i\in\bbR_+, i\in[I]$ the scalar parameters for the Dirichlet distribution. Let the $k$th row of $\Theta_i$ be denoted by $\Theta_{ik}$. Then, the remainder of the generate model specification  for words in document $\ell$ is:
\begin{align*}
    \beta_\ell | c_\ell = i &\sim \text{Dir}_{J_{i}}(\alpha_i) \\
    z_{\ell,j} | \beta_{\ell} &\overset{iid}{\sim} \text{Cat}(\beta_{\ell}), \quad j\in[n] \\
    x_{\ell,j} | \Theta_i, z_{\ell,j}=k &\sim \text{Cat}(\Theta_{i,k}).
\end{align*}
The above four lines defining our model can be written succinctly and equivalently as:
\begin{equation*}
    c_\ell\sim \text{Cat}(\pi), \; \text{and} \; \bX_\ell|c_\ell=i \sim \text{LDA}(\Theta_i, \alpha_i).
\end{equation*}
    We remark that $J_i$ could potentially be different and as a result, the Dirichlet distributions for different paths might not even be same dimensional. If all $J_i$ are equal to a common value $J$, then we could use $\alpha_1,\dots,\alpha_I\in\bbR^{J}$ as parameters for Dirichlet distributions without assuming symmetric. More generally, we could assume $\alpha_i\in \bbR^{J_i}_+$ as parameters. However, in this paper, we are more interested in the identifiability and rates for the topics (defined via the topic map $\rho$). Thus, we choose a simple symmetric Dirichlet distribution with a single parameter as the basis for the model, which we denote as $\text{Dir}_K$ . It is important to note from the outset that the $\Theta_i$'s share constraints imposed by the underlying DRT $\cT$. For instance,  $\Theta_{i1}=\Theta_{i'1}$ for any $i,i'\in[I]$, since it is always $\rho(v_0)$ (topic associated to the root). Thus, the "root topic" is shared across all the paths. On the other hand, $\Theta_{i J_i} \neq \Theta_{i' J_{i'}}$ for any two $i\neq i'$, since each leaf of $\cT$ appears in a unique path and $\rho$ is injective. This structure will be revisited shortly.

\begin{figure}
    \centering
    \includegraphics[width=0.9\linewidth, trim={2cm 8cm 1cm 12cm}, clip]{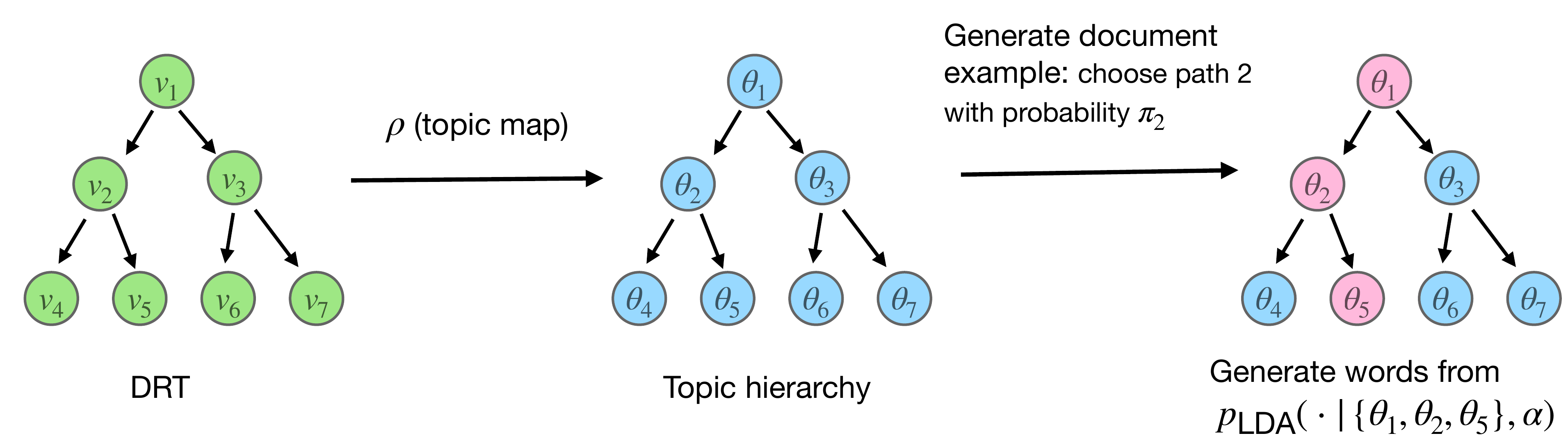}
    \caption{The basic ingredients of the tree-directed topic model. The left figure shows the underlying DRT, an abstract DRT with a single root $v_1$, the middle shows topic hierarchy resulting from the DRT and a topic map, $\rho(v_i)=:\theta_i$, and the right shows an example of the generation of a document given a path chosen categorically with probability, $\pi$.} 
    \label{fig:model}
\end{figure}

Summarizing, we assume that the documents are i.i.d. coming from the above generative model. The words in each document are conditionally i.i.d. and hence exchangeable. Marginalizing out the discrete latent variable $z$, we have the equivalent form of the model as
\begin{align*}
    c_\ell &\sim \text{Cat}(\pi), \\
    \beta_\ell | c_\ell=i &\sim \text{Dir}_{J_i}(\alpha_i), \\
    X_{\ell j} | \beta_\ell, \bTheta &\sim \text{Cat}\left(\sum_{k\in [J_i]} \beta_{\ell k} \Theta_{i,k}\right).
\end{align*}
It follows that the density of a document $\doc$ with respect to the count measure on $[V]^n$ is
\begin{equation}\label{eq:document density1}
    p(\doc | \cT, \rho, \pi, \alpha) = \sum_{i\in[I]} \pi_i p_{\text{LDA}}(\doc | \Theta_i, \alpha_i) = \sum_{i\in[I]} \pi_i \int_{\Delta^{J_i-1}} \prod_{v\in[V]} \innerprod{\Theta_{i \cdot v}, \beta}^{m_v} \text{Dir}_{J_i, \alpha_i}(d\beta),
\end{equation}
where a particular enumeration of the paths $\varphi_i, i\in[I]$ is (arbitrarily) fixed, $\Theta_i = \rho(\varphi_i)$ and $\pi_i, \alpha_i$ are the probability associated to $\varphi_i$ and Dirichlet parameter for the $i$th path respectively, and $m_v = \sum_j 1(X_j = v)$ is the count of the number of occurrences of word $v\in[V]$ in the document. We note that $\Theta_{i\cdot v} = (\Theta_{i1v},\dots,\Theta_{i J_i v})\in\bbR^{J_i}$ is the $v$-th column of $\Theta_i$. We denote this density by $p_{n,\omega}$, where $\omega$ contains all the relevant parameters and denote the corresponding probability measure on $[V]^n$ by $P_{n,\omega}$.
Thus, based on the above display, our model can be viewed as a mixture of the LDA induced distributions, but the LDA components are constrained via shared topics imposed by the latent DRT $\cT$ and topic map $\rho$ to be learned. It is this sharing that makes the model interesting and (as we shall see) the latent topic hierarchy (tree) learnable and interpretable.

\begin{remark}
A particular topic tree leads to a particular sharing structure of the associated topics across the documents. The following illustration will be helpful. For the topic tree shown in Figure \ref{fig:model}, each document only uses 3 topics from the collection of 7 total topics. However, documents cannot use \textit{any} 3 topics -- each tuple of valid 3 topics arises from the paths of the hierarchy. Thus, the topic $\theta_1$ is carried by all documents -- this root topic can be seen as a topic placing high mass on words that occur frequently across all documents in the corpus (e.g., stop words, articles etc.). Meanwhile $\theta_2$ is carried by a certain proportion of documents. On the other hand, if a document uses the topic $\theta_5$, then it has to also use the topics $\theta_2, \theta_1$. This captures the idea that a document about soccer (a type of sport) must also include a more concrete topic about sports (and also stop words) -- this general topic about sports would place higher mass on words that occur frequently in all sports-related documents. A document about basketball (another type of sport) will share the sports topic as well as stop-words topic, while differing from the previous document in one topic -- thus the topics about soccer and basketball are sub-topics of the sports topic, and capture specialized semantics within the particular category of sports. 
\end{remark}

\begin{figure}
    \centering
\includegraphics[clip, trim=3.3cm 0.5cm 2cm 1.2cm, width=0.98\textwidth]{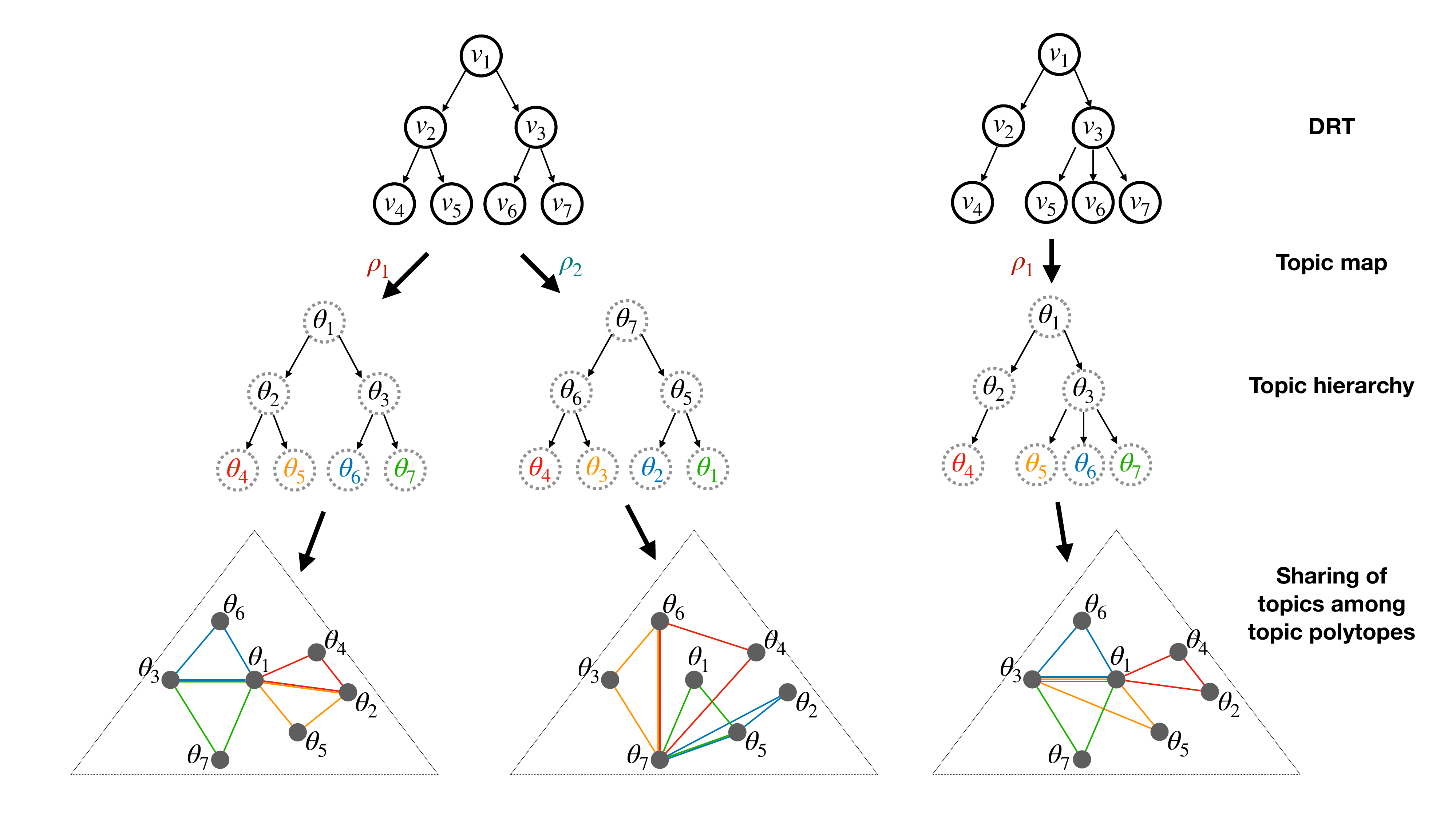}
    \caption{Illustration of DRT and topic hierarchy with fixed set of topics $\Theta$ following the example in text: (top row) two DRT of size $(I=4,J=2,K=7)$; topic maps $\rho_1:v_\ell\to \theta_\ell$ and $\rho_2:v_\ell\to\theta_{8-\ell}$ induce different topic hierarchies (middle row); (third row) shows how these topic hierarchies induce sharing of topics across documents, the topics are the black dots inside the vocabulary simplex (big bounding black triangle here). Each triangle (in different colors) represents a component topic polytope corresponding to a particular path in the tree (the respective leaf is highlighted with the same color). The size of the tree has the following effects: $I=4$ is the number of polytopes, $J=2$ is the dimension of each polytope and $K=7$ is the total number of topics.}
    \label{fig:DRT_example}
\end{figure}

\begin{remark} 
We make the following additional remarks regarding the model formulation.
\begin{enumerate}
    \item The model size is connected to the tree size as follows. If $\cT$ has size $(I,\bJ,K)$, then there are $K$ total topics for the model and $I$ components (seen as a mixture as shown above). $\bJ$ captures the dimensionality of each of the component LDA models. 
    \item The LDA is obviously a special case, with a DRT $\cT$ of size $I=1, J=K$ (a linear tree).
    \item The Dirichlet distribution plays no special role in this paper other than in designing the Gibbs sampler. Other parametric families of identifiable distributions on $\Delta^{J_i-1}$ can be used in place of the Dirichlet. This particular family is considered because of its popularity in topic models and also for ease in designing inference algorithms. 
   
\end{enumerate}
\end{remark}

\subsection{Model's geometry}\label{sec:model-geom}
Consider a particular component of the model, say $i$, corresponding to path $\varphi_i$ of $\cT$. Let $G_i = \text{Dir}_{J_i}(\alpha_i)_{\#} L_i$, where $L_i:\Delta^{J_i-1}\to \Delta^{V-1}$ is defined as $L_i(\beta) = \Theta_i^\top \beta$ for $\beta\in\Delta^{J_i-1}$. $G_i$ is the push-forward measure on $\Delta^{V-1}$ of a Dirichlet distribution under the linear transformation $L_i$. Note that $G_i$ is supported on $\cS_i:= \conv \Theta_i $, the component topic polytope corresponding to the path $\varphi_i$. The Dirichlet distribution for $\beta\in\Delta^{J_i-1}$ induces the distribution $G_i$ for $\eta=\sum_{k\in[J_i]} \beta_k \Theta_{ik}$. If $J_i \leq V$,  $\Theta_i$ are almost surely linearly independent, in this case $L_i$ has rank $J_i-1$ and $G_i$ admits the following density with respect to the $J_i-1$ dimensional Hausdorff measure $\cH^{J_i-1}$ on $\Delta^{V-1}$
$$g_i(\eta) = \text{Dir}_{J_i}(L_i^{-1}(\eta)|\alpha_i) J(L)^{-1},$$
where $J(L)$ is the Jacobian of the linear map. On the other hand, if $J_i\geq V$, then $L_i$ is generally $(V-1)-$ranked and $G_i$ has the following density with respect to the $(J_i-V)$ dimensional Hausdorff measure on $\bbR^V$, given by 
$$g_i(\eta) = \int_{L_i^{-1}(\eta)} \text{Dir}_{J_i}(\beta|\alpha_i) J(L)^{-1} \cH^{J_i-V}(d\beta).$$
Using change of variable, we can write $p_{n,\omega}$ in Eq. \eqref{eq:document density1} in the following alternate way, in terms of these latent measures $G_1,\dots,G_I$:
\begin{equation}\label{eq: document density2}
    p(\doc|\cT,\rho,\pi,\alpha) = \sum_{i\in[I]} \pi_i \int_{\cS_i} \prod_{v\in[V]} \eta_v^{m_v} G_i(d\eta) = \int_{\cup_i \cS_i} \prod_v \eta_v^{m_v} \left(\sum_{i\in[I]} \pi_i G_i\right)(d\eta).
\end{equation}

This can be visualized as follows. For each path $\varphi_i$ of $\cT_i$, there is a component polytope $\cS_i$ (formed as the convex hull of topics only from this path) and the model imposes a continuous distribution $G_i$ supported on $\cS_i$ as described above. For each document, first a component is chosen with probability $\pi$, given this choice, a document-specific $\eta\in \cup_i \cS_i \subset \Delta^{V-1}$ is chosen from the corresponding $G_i$, conditional on which, the words in the document are i.i.d. categorical. This equivalent data generating model for a single document is expressed below
\begin{align*}
    c | \pi &\sim \text{Cat}(\pi) \\
    \eta | c=i, G_1,\dots,G_I &\sim G_i \\
    X_{j} | \eta &\sim \text{Cat}(\eta).
\end{align*}

\begin{remark}
    The above representation suggests a connection to mixture models with a key distinction. Standard
    mixture models are represented by $\sum_i \pi_i f(x|\theta_i)$, where $f$ is some probability kernel's density function (e.g. Gaussian mixture model corresponds to a Gaussian kernel, $\theta_i=(\mu_i,\Sigma_i)$ capture the parameters of the components). The idea of a latent mixing measure is used in the theoretical analysis for the standard mixture model \citep{nguyen2013convergence}, this measure is $G=\sum_i \pi_i \delta_{\theta_i}$, a discrete probability measure capturing all information about the unknown parameters. In terms of $G$, the model can be written as $\sum_i \pi_i f(x|\theta_i) = \int f(x|\eta) G(d\eta)$. Based on the representation in the above display in Equation \eqref{eq: document density2}, in our case, we can consider $G=\sum_i \pi_i G_i$ as the latent mixing measure and the kernel being a product of multinomials (product comes for the $n$ words in the document). However, in our case, this measure is neither discrete nor has a density with respect to Lebesgue measure on $\Delta^{V-1}$ -- this is because the $G_i$ are supported on different affine spaces of potentially different dimensions. 
\end{remark} 

\begin{example} 
We illustrate the model and associated notions with a concrete example, illustrated in Figure \ref{fig:DRT_example}. Consider a fixed set of $K=7$ topics $\theta_1,\dots,\theta_K$ (black dots inside the vocabulary simplex $\Delta^{V-1}$ with $V=3$, shown as the big triangles in the bottom row of Figure \ref{fig:DRT_example}). These topics can lead to very different probability distributions for the corpus under our model, based on the DRT and topic map. We demonstrate two such DRTs (first row in Figure). Let us walk through the model under the left DRT with topic map $\rho_1$ (which maps $v_i\mapsto \theta_i$). There are 4 paths in the DRT, and the collections of topics along these paths are $\{\{\theta_1,\theta_2,\theta_4\}, \{\theta_1,\theta_2,\theta_5\}, \{\theta_1,\theta_3,\theta_6\}, \{\theta_1,\theta_3,\theta_7\}\}$ (leaf nodes colored red, yellow, blue, green respectively). Each of these paths induce a measure $G_i$ supported on a component polytope (corresponding colored triangle inside the vocabulary simplex in the bottom row). The DRT and topic map together impose the topic hierarchy, which in turn, induce a sharing structure of the topics among the components. Clearly, the topic $\theta_1$ is shared by all components, while $\theta_2$ is only shared between 2 components. Note the effect that a different topic map ($\rho_2$ in the figure) has on this sharing structure of the topics among the components. And clearly, a different DRT has a totally different topic hierarchy and hence sharing structure, e.g. for the DRT on the right, $\theta_3$ is shared by 3 component polytopes (no topic has this behavior for the DRT on the left). For each document, first one of these component polytopes is chosen. Then $\eta$ is generated from the corresponding $G_i$ and conditionally, words in that document are generated from this $\eta$. Thus, the sharing structure of topics under the DRT and topic map, also induce how topics are shared across documents — for two documents, one associated to the first path and one to the second in the left-most example in Figure \ref{fig:DRT_example}, they both use $\{\theta_1,\theta_2\}$, however the former uses topic $\theta_4$, while the latter uses $\theta_5$ (which are unique to these components).
\end{example}

Finally, the distribution of a corpus of i.i.d. documents $\corpus=(\doc^1,\dots,\doc^m)$ can be expressed by
$$p(\corpus | \cT, \rho, \pi,\alpha) = \prod_{\ell\in [m]} p(\doc^\ell |\cT,\rho,\pi,\alpha).$$
For brevity, for the rest of the paper this density is also denoted by $p_{\corpus|\omega}$ where $\omega$ captures all the parameters and the corresponding probability measure on $[V]^{m\times n}$ is denoted by $P_{\corpus|\omega}$. Based on the geometric viewpoint and conditional i.i.d. nature of the words, if we let $\hat{\eta}_\ell\in\Delta^{V-1}$ denote the document mean for $\doc^\ell$, i.e., $\hat{\eta}_{\ell v} = \sum_{j\in[n]} 1(X_{\ell j}=v)/n$, then $\hat{\eta}_\ell \to \eta \sim G$ almost surely as $n\to\infty$. We shall exploit this together with the representation of $G$ as a convex combination of low-dimensional probability measures in the following section.

\section{Identifiability}\label{sec:identifiability}
We are ready to investigate the identifiability of the latent structures arising in the tree-directed topic models. Our approach exploits the underlying geometry of the model and essentially shows that any potential latent measure $G=\sum_i \pi_i G_i$ can be uniquely decomposed into the $G_i'$s, from which the DRT and the topic map are identified uniquely. This approach requires that the document length is sufficiently large, and the asymptotic regime of $n\to\infty$, while having the advantage of requiring only very mild assumptions on topics and other related quantities. A possible alternative approach to study identifiability would be through the method of moments (up to a certain order). We pursue the former approach by focusing on the geometry of a collection of polytopes. 

\subsection{Topic hierarchies}
A primary object of inferential interest is the \textit{topic hierarchy}, which is composed of both the actual position of the topics in the vocabulary simplex as well as the way they are shared across components --- up to this point, we have only defined isomorphism between DRTs. We begin with a brief discussion of the notion of \textit{similar topic hierarchy} here, which would be important in the sequel. As discussed in Section \ref{sec:model-geom}, given a DRT $\cT=(\cV,\cE,v_0)$ and a topic map $\rho$, one obtains an induced topic tree $\tilde{\cT}=(\tilde{\cV}, \tilde{\cE}, \theta_0)$ where $\tilde{\cV}=(\rho(u) : u\in\cV\}$, $\theta_0=\rho(v_0)$ and $(\rho(u),\rho(u'))\in\tilde{\cE}\iff (u,u')\in \cE$. In Figure \ref{fig:model}, the tree in the middle is the induced topic tree arising due to the DRT (left) and the topic map $\rho$. Geometrically, the component polytopes of the model have precisely $\theta\in \tilde{\cV}$ as the extreme points and $\theta,\theta'\in\tilde{\cE}$ are extreme points of the same polytope iff $\theta, \theta'$ are both on some maximal path of $\tilde{\cT}$. Given $(\cT,\rho)$, let the induced topic tree be denoted by $\xi(\cT,\rho)$. The topic hierarchy is formalized as follows:
\begin{definition}[Identical Topic Hierarchies]\label{def:topic_hierarchy1}
     The two sets of DRT and associated topic maps $(\cT, \rho)$ and $(\cT', \rho')$ are said to have identical \textit{topic hierarchy} if $\xi(\cT,\rho) \cong \xi(\cT',\rho')$ and there is a unique isomorphism $\sigma:\xi(\cT,\rho)\leftrightarrow \xi(\cT',\rho')$ such that $\theta = \sigma(\theta)$ for all nodes $\theta$ in $\xi(\cT,\rho)$.
\end{definition}

\begin{figure}
    \centering
\includegraphics[clip, trim=3.3cm 9cm 2cm 6cm, width=\linewidth]{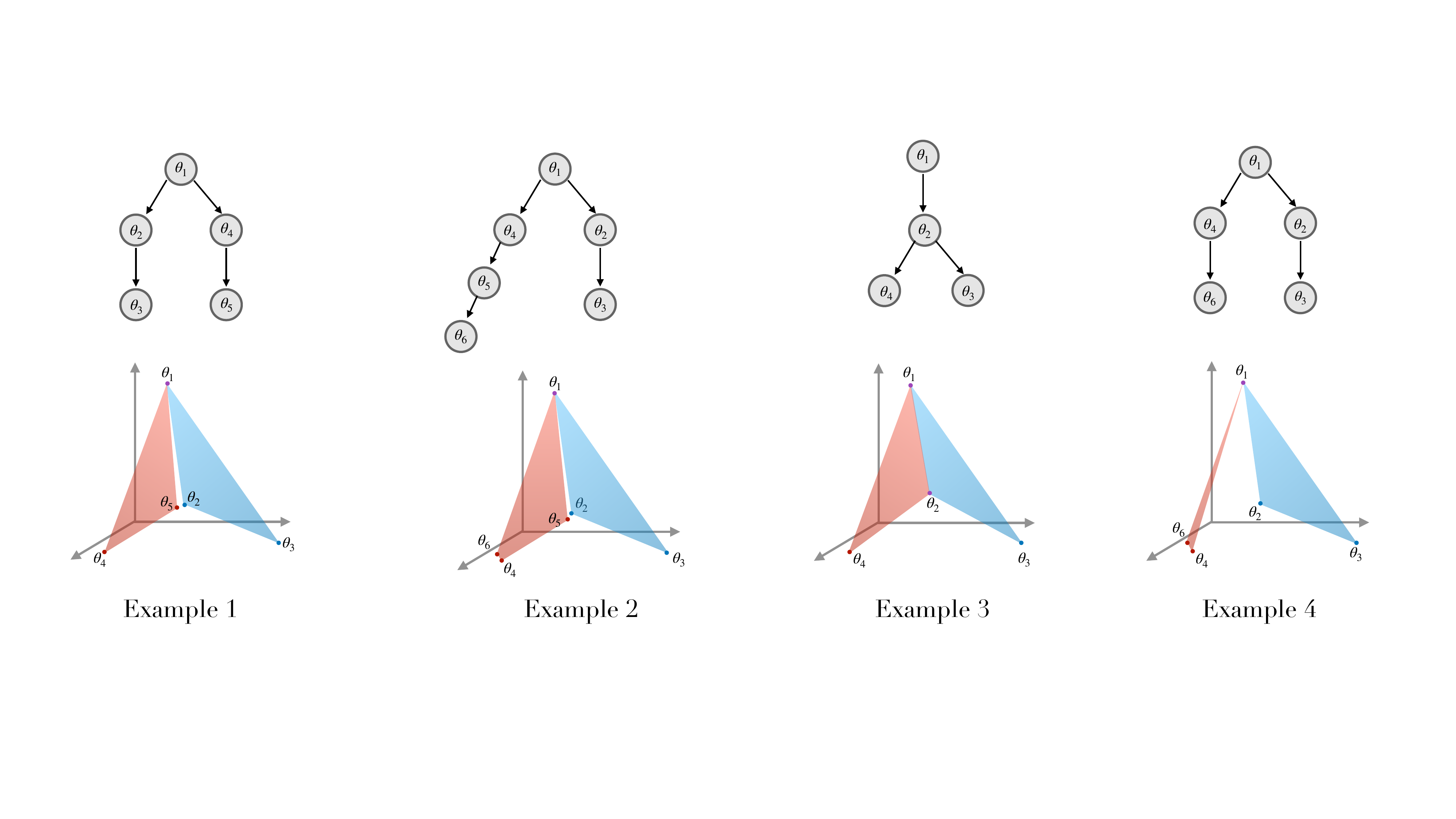}
    \caption{Illustration of topic hierarchies. The top row is the topic tree induced by an underlying DRT $\cT$ and a topic map $\rho:v_i\to\theta_i$. The bottom row shows the geometry of the component polytopes resulting due to the corresponding topic tree above. Examples 1, 2 and 3 are approximately close topic hierarchies, while example 4 is not. Topic trees need not be isomorphic while isomorphic topic trees do not necessarily yield the same topic hierarchy.}
    \label{fig:topic_hierarchy}
\end{figure}

\begin{remark}
The condition in Definition \ref{def:topic_hierarchy1} conveys the intuition that the topic (tree) hierarchies are precisely induced by the DRT and the topic maps. Requiring only isomorphic topic trees is insufficient since they capture only the sharing structure, not the actual topics. Note that while this condition implies that $\cT\cong\cT'$, i.e., the underlying DRTs are isomorphic, Figure \ref{fig:DRT_example} gives examples where the same DRT might lead to different topic hierarchies, even when the set of topics are same. 
\end{remark}

The remark suggests that this notion of topic hierarchy is rigid and does not allow flexibility. Consider the examples in Figure \ref{fig:topic_hierarchy}, and assume $\norm{\theta_2-\theta_5}<\epsilon$ and $\norm{\theta_4-\theta_6}<\epsilon$ in the figure. The first three examples show parameters which all lead to two components (all the polytopes are embedded in the vocabulary simplex $\Delta^{V-1}$), each being a triangle (or almost) and sharing an edge (or almost) and moreover, the positions of the corresponding topics are similar. This motivates our next definition.

\begin{definition}[Approximate Topic Hierarchies]\label{def:topic_hierarchy2}
    The two sets of DRT and associated topic maps $(\cT, \rho)$ and $(\cT',\rho')$ are said to share $\epsilon-$approximate topic hierarchies, for some $\epsilon>0$, if there exist partitions $\cP$ of $\tilde{\cV}$ and $\cP'$ of $\tilde{\cV}'$ with $|\cP|=|\cP'|$, where $\tilde{\cV}$ (resp. $\tilde{\cV}'$) is the set of nodes in $\xi(\cT,\rho)$ (resp. $\xi(\cT',\rho')$), such that $\exists \text{ bijection } \tau:\cP \leftrightarrow \cP'$ satisfying $\theta\in P\in \cP, \theta'\in \tau(P) \in \cP' \Rightarrow \norm{\theta - \theta'}<\epsilon$. Moreover, for every path $(\theta_1,\dots,\theta_J)$ in $\xi(\cT,\rho)$ such that $\theta_j\in P_j\in\cP$, there exists a path $(\theta_1'\dots,\theta_{J'}')$ such that $\theta_{j'}'\in \cup_{j\in[J]} \tau(P_j)$ for every $j'\in [J']$, and vice versa. 
\end{definition}

To illustrate the idea of the partition as described in the preceding definition, consider again Figure \ref{fig:topic_hierarchy}. We argue that the topic hierarchies in examples 2 and 3 are approximately close. Consider the partition $\cP = \{\{\theta_1\}, \{\theta_4,\theta_6\}, \{\theta_2,\theta_5\}, \{\theta_3\}\}$ for example 2 and $\cP'=\{\{\theta_1\}, \{\theta_4\}, \{\theta_2\}, \{\theta_3\}\}$ for example 2. The bijection $\{\theta_1\}\leftrightarrow\{\theta_1\}$, $\{\theta_4,\theta_6\}\leftrightarrow\{\theta_4\}, \{\theta_2,\theta_5\}\leftrightarrow\{\theta_2\}$ and $\{\theta_3\}\leftrightarrow\{\theta_3\}$ between $\cP$ and $\cP'$ satisfies the condition in the preceding definition, which shows that these two topic hierarchies are approximately close. Similarly, the topic hierarchies from example 1 and example 2 in Figure \ref{fig:topic_hierarchy} are also approximately close. We remark that while having the same topic hierarchy implies isomorphism of the underlying DRTs, this is relaxed for the $\epsilon-$approximate topic hierarchies: clearly, the DRTs in examples 1 and 3 are neither isomorphic, nor is one a sub-tree of the other. On the other hand, the topic hierarchy in example 4 is not approximately close to any of those in the former three examples. To see this, comparing to say, example 2, consider the partition $\{\{\theta_1\}, \{\theta_4,\theta_6\}, \{\theta_2\}, \{\theta_3\}\}$ for example 4. While it has a bijection with $\cP$ (for example 2) satisfying the first part of Definition \ref{def:topic_hierarchy2}, it violates the second part. The path $(\theta_1,\theta_4, \theta_6)$ in example 4 uses only two of the sets in the partition, while each path in the topic hierarchy in example 2 uses three distinct sets from its partition.

\begin{remark}
One of the simplest forms of ambiguity (non-identifiability) in our model arises due to the permutation of labels (aka topic map), subject to the DRT restrictions. This is similar in spirit to the issue of label switching for mixture models. However, in this model an arbitrary permutation of the labels might not lead to the same model — e.g., consider example 1 in Figure \ref{fig:topic_hierarchy}: a random permutation of the labels of the topics might remove $\theta_1$ from the root, which would lead to a very different sharing structure. For mixture models, this problem is dealt with by considering a suitable metric (e.g. Wasserstein metric on the latent mixing measures) which is agnostic to the ordering of the components. For ours, we construct a suitable metric for this case, which considers all such permutations respecting the structure imposed by the DRTs. Some non-identifiable examples, which cannot be solved by permuting the labels retaining the topic hierarchy are discussed at the end of Section \ref{sec: identifiability2}.
\end{remark}

\subsection{Metrics on topic hierarchies}\label{sec: metrics}

Define $\mathfrak{T}(I,\bJ,K)$ to be the set of DRTs $\cT=(\cV,\cE,v_0)$ of size $(I,\bJ,K)$ with $\cV=[K]$. Also let $\mathfrak{T}(I)=\cup_{\bJ, K} \mathfrak{T}(I,\bJ,K)$ and $\mathfrak{T}(I,K)=\cup_{\bJ} \mathfrak{T}(I,\bJ,K)$ stand for the set of possible DRTs with $I$ leaves and with $I$ leaves and $K$ nodes, respectively. Finally, $\mathfrak{T}(I,J)$ (for $J\in \bbN$) denotes the space of all DRTs with $I$ leaves and each of whose paths has the same length $J$. 
Given a DRT $\cT$, let $\mathfrak{R}(\cT)=\{\rho:\cV\to \Delta^{V-1}\mid \rho \text{ is injective}, \cV \text{ is the set of nodes of } \cT\}$ be the space of possible topic maps associated with $\cT$.  

The collection of all parameters $\omega$ of the tree-directed topic model is given by
$$\Omega(I,\bJ,K) = \{(\cT,\rho,\pi): \cT\in\mathfrak{T}(I,\bJ,K), \rho\in \mathfrak{R}(\cT), \pi:\Phi(\cT)\to [0,1] \ni (\pi(\varphi) : \varphi\in \Phi(\cT))\in\Delta^{I-1}\},$$
the space of tuples $(\cT,\rho,\pi)$ consisting of a DRT $\cT$ of size $(I,\bJ,K)$, topic map $\rho$ on $\cT$ and path-probability $\pi$ (seen as a map on the set of paths of $\cT$). Similarly, we take $\Omega(I)$ and $\Omega(I,K)$ to be the set of tuples subject to the appropriate restrictions on $\cT$. Finally, we define
$$\Omega(\cT) = \{(\rho,\pi): \rho\in \mathfrak{R}(\cT), \pi:\Phi(\cT)\to [0,1] \ni (\pi(\varphi) : \varphi\in \Phi(\cT))\in\Delta^{I-1}, \text{ where } I=|\Phi(\cT)|\}$$
to be the space of topic maps and path probabilities defined on a fixed DRT $\cT$. Viewing $\pi$ as a map on $\Phi(\cT)$ addresses the ambiguity in the enumeration of the paths of $\cT$.

Given $\omega\in \Omega(I,\bJ,K)$ (or any of $\Omega(I),\Omega(I,K), \Omega(\cT)$), let $p_{n,\omega,\alpha}$ be the probability density (with respect to count measure on $[V]^n$) of a length-$n$ document $\doc\in[V]^n$ drawn from the model described in Section \ref{sec:model}, and $p_{n,\omega,\alpha}$ takes the expression in Eq. \eqref{eq:document density1}, where $\alpha:\Phi(\cT)\to\bbR_+$, represents the Dirichlet parameter associated with each path of the DRT. Note that Eq. \eqref{eq:document density1} is agnostic to the choice of enumeration $\varphi_1,\dots,\varphi_I$ of the paths of the DRT $\cT$, as long as $\pi$ and $\alpha$ are defined as maps on the set of paths. $P_{n,\omega,\alpha}$ is the associated probability measure whose density is $p_{n,\omega,\alpha}$. When $\alpha_i$'s are equal to a common value $\alpha\in\bbR_+$ and is known, we drop the $\alpha$ from the subscript. We are now ready to define a metric on the parameter space just defined. 
\begin{definition}
    Given $\omega=(\cT,\rho,\pi), \omega'=(\cT',\rho',\pi')\in\Omega(I)$, define $d_{\bbH+}:\Omega(I)\times\Omega(I)\to\bbR$ as 
    \begin{equation}\label{def: augmented tree directed Hausdorff metric}
        d_{\bbH+}(\omega,\omega') = \min_{\sigma\in \bbS(I)} \sum_{i\in[I]} \left[d_{\cH}\left(\cS_i, \cS_{\sigma(i)}'\right) + |\pi_i - \pi_{\sigma(i)}'|\right]
    \end{equation}
    where $\pi_i, \cS_i$ are the path-probability and component topic polytope associated with path $i$, given a particular enumeration $\varphi_1,\dots,\varphi_I$ of the paths of $\cT$ (and similarly for $\pi_i', \cS'_i$) and $d_{\cH}$ is the Hausdorff metric on the space of non-empty subsets of $\Delta^{V-1}$, and $\bbS(I)$ is the set of permutations of $[I]$.
\end{definition}
Note that the above definition does not depend on the particular enumerations of paths chosen for $\cT, \cT'$. Thus, the defined metric seeks to find an optimal bijection between the paths of the involved DRTs which minimizes the total distance between the corresponding topic polytopes (through the Hausdorff metric) and the path probabilities. The restriction of $d_{\bbH+}$ to $\Omega(I,K)\subset \Omega(I)$ and $\Omega(\cT)$ (seen as a subset of $\Omega(|\Phi(\cT)|)$) is also denoted by $d_{\bbH+}$. We call $d_{\bbH+}$ as the \textbf{augmented tree-directed Hausdorff metric}. It is clear that $d_{\bbH+}$ is non-negative and symmetric.  The following lemma also guarantees that it satisfies the triangle inequality.

\begin{lemma}\label{lemma: dh+ triangle}
    For any $\omega_1,\omega_2,\omega_3\in\Omega(I)$, there holds
    $d_{\bbH+}(\omega_1,\omega_3) \leq d_{\bbH+}(\omega_1, \omega_2) + d_{\bbH+}(\omega_2,\omega_2).$
\end{lemma}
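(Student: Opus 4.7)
The plan is to reduce the triangle inequality for $d_{\bbH+}$ to the triangle inequalities for the Hausdorff metric $d_{\cH}$ (on non-empty compact subsets of $\Delta^{V-1}$) and the absolute value on $\bbR$, via a composition of optimizing permutations. This is the standard strategy used for proving the triangle inequality for Wasserstein-type minimum-matching metrics and should go through without essential difficulties.

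Concretely, fix enumerations of the paths of $\cT_1,\cT_2,\cT_3$, yielding component polytopes $\cS_i^{(k)}$ and probabilities $\pi_i^{(k)}$ for $k\in\{1,2,3\}$ and $i\in[I]$. Let $\sigma_{12}\in\bbS(I)$ be a permutation attaining the minimum in the definition of $d_{\bbH+}(\omega_1,\omega_2)$, and similarly let $\sigma_{23}\in\bbS(I)$ attain the minimum for $d_{\bbH+}(\omega_2,\omega_3)$. Set $\sigma_{13}:=\sigma_{23}\circ\sigma_{12}\in\bbS(I)$ (the set $\bbS(I)$ is closed under composition, so this is a valid candidate permutation). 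The key observation is that since the definition of $d_{\bbH+}$ takes a minimum over all permutations, any explicit choice yields an upper bound; in particular $d_{\bbH+}(\omega_1,\omega_3)$ is at most the sum over $i\in[I]$ of $d_{\cH}(\cS_i^{(1)},\cS_{\sigma_{13}(i)}^{(3)}) + |\pi_i^{(1)} - \pi_{\sigma_{13}(i)}^{(3)}|$.

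For each fixed $i$, by the triangle inequality for $d_{\cH}$ (applied to the intermediate set $\cS_{\sigma_{12}(i)}^{(2)}$) and for $|\cdot|$ (applied to the intermediate value $\pi_{\sigma_{12}(i)}^{(2)}$), there holds
\begin{align*}
d_{\cH}\bigl(\cS_i^{(1)},\cS_{\sigma_{13}(i)}^{(3)}\bigr) &\le d_{\cH}\bigl(\cS_i^{(1)},\cS_{\sigma_{12}(i)}^{(2)}\bigr) + d_{\cH}\bigl(\cS_{\sigma_{12}(i)}^{(2)},\cS_{\sigma_{23}(\sigma_{12}(i))}^{(3)}\bigr),\\
\bigl|\pi_i^{(1)}-\pi_{\sigma_{13}(i)}^{(3)}\bigr| &\le \bigl|\pi_i^{(1)}-\pi_{\sigma_{12}(i)}^{(2)}\bigr| + \bigl|\pi_{\sigma_{12}(i)}^{(2)}-\pi_{\sigma_{23}(\sigma_{12}(i))}^{(3)}\bigr|.
\end{align*}
Summing both inequalities over $i\in[I]$ gives an upper bound consisting of two sums: the first is precisely $d_{\bbH+}(\omega_1,\omega_2)$ by the choice of $\sigma_{12}$. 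For the second, perform the change of summation index $j=\sigma_{12}(i)$; since $\sigma_{12}$ is a bijection on $[I]$, this sum equals $\sum_{j\in[I]} \bigl[d_{\cH}(\cS_j^{(2)},\cS_{\sigma_{23}(j)}^{(3)}) + |\pi_j^{(2)}-\pi_{\sigma_{23}(j)}^{(3)}|\bigr]$, which is exactly $d_{\bbH+}(\omega_2,\omega_3)$ by the choice of $\sigma_{23}$. Combining yields $d_{\bbH+}(\omega_1,\omega_3)\le d_{\bbH+}(\omega_1,\omega_2)+d_{\bbH+}(\omega_2,\omega_3)$, as desired (correcting the evident typo in the statement).

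The only thing to check beyond this routine argument is that the minima in the definition of $d_{\bbH+}$ are attained, which follows immediately from the finiteness of $\bbS(I)$, and that the Hausdorff metric is well-defined on the relevant class of sets; since each $\cS_i$ is a non-empty compact (convex) polytope in $\Delta^{V-1}$, this is automatic. No substantive obstacle is anticipated: the argument is purely combinatorial bookkeeping on top of two standard triangle inequalities, and extends verbatim to the restrictions of $d_{\bbH+}$ to $\Omega(I,K)$ and $\Omega(\cT)$.
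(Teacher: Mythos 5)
Your proof is correct and follows essentially the same route as the paper's: compose the optimizing permutations $\sigma_{12}$ and $\sigma_{23}$ to obtain a candidate $\sigma_{13}$, apply the triangle inequalities for $d_{\cH}$ and $|\cdot|$ termwise, and reindex the second sum via the bijection $\sigma_{12}$. You also correctly identified the typo in the statement ($\omega_2,\omega_2$ should read $\omega_2,\omega_3$).
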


Although the augmented tree-directed Hausdorff metric satisfies non-negativity, symmetry and the triangle inequality, $d_{\bbH+}(\omega,\omega')=0$ does not necessarily imply that $\omega=\omega'$. Nonetheless, this metric is useful for capturing the latent structure in terms of the topic hierarchy — in particular, we will establish that $d_{\bbH+}(\omega,\omega')=0$ iff $\omega$ and $\omega'$ induce the same topic hierarchy. The following mild conditions on $\omega$ will be required.

\begin{enumerate}[label={(A\arabic*)}]
    \item For any path $\varphi$ of $\cT$, each of $\rho(u)$, for $u\in\varphi$, is an extreme point of $\conv \rho(\varphi)$.\label{assume:all_exposed}
    \item Any pair of component polytopes are distinct, i.e., for $i\neq j\in[I]$, $\cS_i\neq \cS_j$.\label{assume:distinct_components}
\end{enumerate}

These assumptions are jointly on the DRT and topic map. Assumption (A1) states that for each component topic polytope, it must have all its constituent topics as extreme points (vertices which cannot be expressed as convex combinations of other vertices). Thus, we consider topics such that no topic is a convex combination of other topics within a component polytope. Typically $K < V$, and hence with topics in general positions, this condition is almost surely met. This assumption is much weaker than requiring linear independence of the topics. It is even weaker than assuming affine independence among topics in the same component. Assumption (A2) deals with distinctness of the component polytopes. The next result shows the usefulness of the $d_{\bbH+}$ metric for understanding topic hierarchies.

\begin{proposition}\label{lemma: tree-directed identifies tree structure}
\begin{enumerate}
        \item If $\omega_0=(\cT,\rho,\pi)$ satisfying Assumptions \ref{assume:all_exposed} and \ref{assume:distinct_components} is fixed, then there exists $\epsilon_0=\epsilon_0(\omega_0)>0$ and  $C_0=C_0(\omega_0)>0$, such that for all $\omega'=(\cT',\rho',\pi')\in\Omega(I)$, whenever $d_{\bbH+}(\omega_0,\omega')<\epsilon\leq \epsilon_0$, the topic hierarchy of $(\cT',\rho')$ is $C_0\epsilon-$approximately close to that of $(\cT,\rho)$. If additionally, the number of nodes of $\cT$ and $\cT'$ are same, then $\cT\cong \cT'$.
        \item For $\omega, \omega'$ satisfying Assumption \ref{assume:all_exposed}, we have $d_{\bbH+}(\omega,\omega')=0$ if and only if $(\cT,\rho)$ and $(\cT',\rho')$ have the same topic hierarchy and $\pi(\varphi)=\pi'(\sigma(\varphi))$ for every path $\varphi$ of $\cT$, where $\sigma(\varphi)$ is the path of $\cT'$ mapped from $\varphi$ under the unique isomorphism in Definition \ref{def:topic_hierarchy1}.
    \end{enumerate}
\end{proposition}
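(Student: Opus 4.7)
The plan is to leverage assumption \ref{assume:all_exposed}, which guarantees that the extreme points of each component polytope $\cS_i$ are precisely the image topics $\rho(\varphi_i)$, thereby turning a statement about Hausdorff closeness of polytopes (which is what $d_{\bbH+}$ controls) into a statement about vertex-level topic matching (which is what the topic hierarchy encodes).

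For part (2), the ``only if'' direction will go as follows. I will take a permutation $\sigma^\star\in\bbS(I)$ that attains the zero of $d_{\bbH+}$; then $\cS_i = \cS'_{\sigma^\star(i)}$ as subsets of $\Delta^{V-1}$ and $\pi_i = \pi'_{\sigma^\star(i)}$. Assumption \ref{assume:all_exposed} identifies extreme-point sets with topic sets, so $\rho(\varphi_i) = \rho'(\varphi'_{\sigma^\star(i)})$ as sets. In particular $\tilde{\cV} = \tilde{\cV}'$, the root topic (which lies on every path) is common, and the collections of path-vertex-sets of $\tilde{\cT}=\xi(\cT,\rho)$ and $\tilde{\cT}' = \xi(\cT',\rho')$ coincide. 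Lemma \ref{lemma:tree} then yields $\tilde{\cT} \cong \tilde{\cT}'$, and a layer-by-layer argument anchored at the shared root promotes the isomorphism to the identity on topics, establishing the identical topic hierarchy of Definition \ref{def:topic_hierarchy1} together with the claimed path-probability equality. The ``if'' direction is a direct computation: identical topic hierarchies supply an explicit permutation under which each Hausdorff term and each probability difference in Eq. \eqref{def: augmented tree directed Hausdorff metric} vanishes.

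For part (1), I will set
$$\delta_0 = \min\Bigl(\min_{i\neq j} d_{\cH}(\cS_i,\cS_j),\; \min_i \min_{\theta\neq\theta'\in\rho(\varphi_i)} \|\theta-\theta'\|\Bigr),$$
which is strictly positive by \ref{assume:distinct_components}, injectivity of $\rho$, and \ref{assume:all_exposed}, and choose $\epsilon_0 = c_0\delta_0$ for a sufficiently small universal constant $c_0$. Fixing an optimal permutation $\sigma^\star$ for $d_{\bbH+}(\omega_0,\omega')<\epsilon\leq\epsilon_0$ and applying a quantitative vertex-perturbation step (see below) to each pair $(\cS_i,\cS'_{\sigma^\star(i)})$, I will obtain, for every $\theta\in\rho(\varphi_i)$, some extreme point $\theta'\in\rho'(\varphi'_{\sigma^\star(i)})$ with $\|\theta-\theta'\|< C\epsilon$, and symmetrically. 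This induces a proximity relation on $\tilde{\cV}\sqcup\tilde{\cV}'$ whose transitive closure partitions both sides into cells $\cP,\cP'$ with a canonical bijection $\tau$ and diameter bound $C_0\epsilon$ (the chain length in the closure is bounded by $|\cV|$). The path condition in Definition \ref{def:topic_hierarchy2} will follow by letting $\varphi'_{\sigma^\star(i)}$ witness $\varphi_i$: its vertices populate exactly the $\tau$-images of the cells of the vertices of $\varphi_i$. If additionally $|\cV|=|\cV'|$, the approximate matching becomes a true bijection, the partition cells are singletons, and Lemma \ref{lemma:tree} yields $\cT\cong \cT'$.

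The main obstacle will be the quantitative vertex-to-vertex perturbation step, since Hausdorff closeness a priori only gives each vertex a nearby \emph{point} of the other polytope, not a nearby vertex. To upgrade to vertex-level proximity I plan to combine \ref{assume:all_exposed} on both polytopes with the separation scale $\delta_0$: pick a supporting hyperplane exposing $\theta\in\extr\cS_i$, perturb it by $O(\epsilon)$, and observe that the resulting exposed face of $\cS'_{\sigma^\star(i)}$ sits in an $O(\epsilon)$-neighbourhood of $\theta$, hence by \ref{assume:all_exposed} applied to $\cS'_{\sigma^\star(i)}$ must contain an actual extreme point. Tracking constants through the cascade---single-polytope perturbation, then per-path vertex bijection, then cell-level partition bijection---so that the final constant depends only on $\omega_0$ is where most of the technical bookkeeping will live.
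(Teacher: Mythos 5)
Your proposal follows the paper's own proof quite closely in structure: both rely on (i) separation quantities measured on the fixed regular $\omega_0$, (ii) an upgrade from Hausdorff closeness of the matched component polytopes to vertex-level proximity, (iii) a partition construction on $\tilde{\cV}$ and $\tilde{\cV}'$ built from that proximity, and (iv) Lemma \ref{lemma:tree} to recover the tree isomorphism. Your treatment of part (2) is essentially identical to the paper's (extreme-point sets coincide by \ref{assume:all_exposed}, pass to the induced topic trees, apply Lemma \ref{lemma:tree}, then build the unique isomorphism path by path). For part (1) your main deviation is that you re-derive the Hausdorff-to-vertex-matching step via a supporting-hyperplane perturbation, whereas the paper simply invokes Lemma 1(b) of Nguyen (2015); your transitive-closure cell construction is also a slightly different bookkeeping device than the paper's balls $\{u' : \|\rho(u)-\rho'(u')\|<C_0\epsilon\}$, but the two play the same role.

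The genuine gap is in the phrase ``and symmetrically'' (and the companion ``by \ref{assume:all_exposed} on both polytopes''). Your supporting-hyperplane perturbation uses a positive angle at each vertex of $\cS_i$, which is available because $\omega_0$ is fixed and satisfies \ref{assume:all_exposed}. That gives you, for each vertex $\theta$ of $\cS_i$, a nearby vertex $\theta'$ of $\cS'_{\sigma^\star(i)}$. It does \emph{not} give the reverse: an extreme point of $\cS'_{\sigma^\star(i)}$ sitting near the middle of a face of $\cS_i$ (a nearly flat corner created by a small outward bump) is within Hausdorff-$\epsilon$ of $\cS_i$ yet far from every extreme point of $\cS_i$, and \ref{assume:all_exposed} is \emph{not} a hypothesis on $\omega'$, which ranges over all of $\Omega(I)$. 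Worse still, because $\omega'$ need not satisfy \ref{assume:all_exposed}, some topics $\rho'(u')$ may fail to be extreme points of their component polytope at all, yet they still populate $\tilde{\cV}'$ and must be placed in the partition $\cP'$ of Definition \ref{def:topic_hierarchy2}. Without the reverse matching you cannot assign such a topic to any cell consistently with the $C_0\epsilon$-diameter requirement, so the partition $\cP'$ cannot be constructed and the proof of the approximate-hierarchy claim breaks down. The paper addresses precisely this obstacle by imposing its ``Condition 1'' on $\epsilon_0$ --- forcing $\omega'$ in the $d_{\bbH+}$-ball to inherit the separation and non-obtuse angle bounds of $\omega_0$ --- before invoking Nguyen's Lemma 1(b) (whose conclusion $d_M\lesssim d_{\cH}$ needs the angle control on both polytopes). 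Your write-up omits any analogue of this step; you should either insert the corresponding constraint on $\epsilon_0$ or explicitly restrict the class of competing $\omega'$, and while you are at it, fold the cross-component topic separation $\min_{i\neq j}\min_{\theta\in\extr\cS_i,\theta'\in\extr\cS_j,\theta\neq\theta'}\|\theta-\theta'\|$ into your $\delta_0$ (the paper's $r_1$ includes it; yours currently does not), otherwise the transitive closure can merge cells that ought to stay separated.
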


Recall that for any isomorphism $\sigma:\cV\to\cV'$ between isomorphic DRTs $\cT=(\cV,\cE,v_0), \cT'=(\cV',\cE',v_0')$ of $\omega,\omega'$ respectively, $\sigma$ induces a bijection $\tilde{\sigma}:\Phi(\cT)\to\Phi(\cT')$ between the set of paths of $\cT$ and $\cT'$ — the last part of the above proposition makes use of this bijection (denoted as $\sigma(\varphi)$ by slight abuse of notation). 
\begin{remark}
We end our preparation with additional remarks.
\begin{enumerate}
    \item As noted before, the isomorphisms between isomorphic DRTs are not necessarily unique (see Section \ref{sec:background}). However, the particular bijection in the above proposition is special in being unique — this attests to the fact of the claim that the set of topics $\{\rho(u):u\in\cV\}$ and $\{\rho'(u):u\in\cV'\}$ are not just close but are arranged as a tree so that they have the same parent-child structures — in other words, they have the same topic hierarchy.
    \item The first part of the proposition ensures that for fixed $\omega$, whenever another parameter $\omega'$ comes in a sufficiently small neighborhood of $\omega$, in terms of $d_{\bbH+}$ metric, then the underlying topic hierarchy of $\omega$ must be a subset of the topic hierarchy of $\omega'$, i.e., the hierarchy for $\omega'$ contains the hierarchy information of $\omega$ (and possibly some very close other redundant topics). If additionally, $K=K'$ is known, then $\bJ=\bJ'$, where $\cT, \cT'$ have size $(I,\bJ,K)$ and $(I, \bJ',K')$ respectively.
    \item $d_{\bbH+}$ is a pseudo-metric on $\Omega(I)$. Based on part 2) of the above lemma, on $\{\omega\in\Omega(I):\omega \text{ satisfies \ref{assume:all_exposed}}\}$, the equivalence classes under $d_{\bbH+}$ correspond to distinct topic hierarchies. Note that Assumption \ref{assume:distinct_components} is only required in the first part. The reason is that in part 2), the condition that $d_{\bbH+}$ is 0 is strong enough to identify all the components under just assumption \ref{assume:all_exposed}.
    \item The converse direction of Proposition \ref{lemma: tree-directed identifies tree structure} is also true, subject to the fact that $d_{\bbH+}$ is only defined when the underlying DRTs have same number of leaves and the notion of topic hierarchy does not include the path probabilities. We can state the converse direction as follows: if $(\cT,\rho), (\cT',\rho')$ have $\epsilon-$approximate topic hierarchy, then
    $$\min_{\sigma\in\bbS(I)}\sum_{i\in[I]} d_{\cH}(\cS_i, \cS_{\sigma(i)}) < \epsilon,$$
    where we note that the left side of the above display is basically $d_{\bbH+}$ as in Definition \ref{def: augmented tree directed Hausdorff metric}, ignoring the path-probabilities.
\end{enumerate}
\end{remark}

\subsection{Identifiability theorems}\label{sec: identifiability2}
Having developed the requisite technical tools, we are ready to describe our main identifiability results in this section. For a standard parametric model $\{P_\theta:\theta\in\Theta\}$, the notion of identifiability boils down to injectivity of the mapping $\theta\mapsto P_\theta$. This is a necessary requirement for obtaining a consistent estimator $\hat{\theta}_m$ from i.i.d. observations $X_1,\dots,X_m\sim P_{\theta_0}$, i.e. $\hat{\theta}_m\to \theta_0$ as $m\to \infty$. For our model setting, the data size is determined by $m$ and $n$ and the i.i.d. structure is across documents, not the words inside a document. Using the notations from the previous section, we wish to establish identifiability in the `increasing $n$' regime, i.e. we want to ensure that $\omega$ can be estimated consistently given $\bX=(\doc^1,\dots,\doc^m)$ as $m,n\to\infty$. Recall from Eq. \eqref{eq: document density2} that for $\omega\in\Omega(I)$,

$$p_{n,\omega,\alpha}(\doc) = \int f(\doc|\eta) G(d\eta), \quad f(x_1,\dots,x_n|\eta) = \prod_{v \in [V]} \eta_v^{\sum_{j=1}^{n} 1(x_j=v)},$$
where the kernel $f$ is a $n$ product of multinomials and the mixing measure $G = \sum_{i\in[I]} \pi_i G_i$, where $G_i$ is as defined in Section \ref{sec:model-geom}. More specifically, for every $\omega=(\cT,\rho,\pi)\in\Omega(I)$ and $\alpha:\Phi(\cT)\to\bbR_+$ , there is a latent mixing measure $G(\omega,\alpha)$, based on which we have observations from $\doc \sim P_{n,G}$, with $p_{n,G}(\bx)=\int f(\bx|\eta)G(d\eta)$ as above. The reason we represent the model in these two steps, $(\omega,\alpha)\mapsto G(\omega,\alpha), G\mapsto P_{n,G}$, is that in order to identify the latent structure as desired, we hope to identify $\omega$ (and possibly $\alpha$), not just $G$. This leads to the question whether $(\omega,\alpha)\mapsto G(\omega,\alpha)$ is injective (under the metric we consider on the space for $\omega$) — this is the question we are interested in here. Note that this can be seen as `increasing $n$' regime, since as $n\to \infty$, the distribution of $\bar{\bX}$ (document mean), defined as $\bar{\bX}_v = \sum_j 1(X_j=v)/n$, converges weakly to $G$ itself. This is equivalent to the `noiseless' setting (similar to \cite{javadi2019nonnegative}), where we observe i.i.d. $\eta_1,\dots,\eta_m \sim G(\omega,\alpha)$ and wish to estimate $\omega$.

Our approach is to exploit the geometric structure of $G$ to identify $\omega$, noting that $G$ is supported on the union of $I$ convex polytopes. The particular nature of the underlying distribution which induces the measure $G_i$ on component polytope $\cS_i$ (the Dirichlet distribution in our case) is irrelevant for this study -- any distribution, continuous on the appropriate probability simplex, parametrized by $\alpha$, works. The following lemma, which deals with intersections of affine spaces and polytopes, is crucial in the results to follow.

\begin{lemma}\label{lemma: intersection affine space}
    Suppose $\mathcal{A},\mathcal{A}'$ are affine spaces and $\mathcal{S}$ is a convex polytope
\begin{enumerate}
    \item $\mathcal{S}\cap\mathcal{A}$ is either empty or another convex polytope.
    \item $\mathcal{A}\cap\mathcal{A}'$ is either empty or another affine space.
    \item If $\dim \mathcal{A}=\dim\mathcal{A}'$, then $\mathcal{A}\cap\mathcal{A}'$ either has strictly lower dimension or $\mathcal{A}=\mathcal{A}'$.
    \item If $\dim\mathcal{A}<\dim\mathcal{A}'$, then either $\mathcal{A}\cap\mathcal{A}'$ has dimension strictly less than $\dim\mathcal{A}$ or $\mathcal{A}\subset \mathcal{A}'$.
\end{enumerate}
\end{lemma}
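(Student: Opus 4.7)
The approach is to reduce each claim to elementary facts about linear systems and half-space representations. The four parts are essentially standard convex-geometric bookkeeping, with no deep obstacle; the main care is in conventions about dimensions of empty intersections and in assembling the representations cleanly.

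For part (1), I would invoke the H-representation: every convex polytope can be written as $\mathcal{S}=\{x\in\bbR^d:\langle a_i,x\rangle\le b_i,\; i\in[N]\}$ for finitely many $(a_i,b_i)$, and every affine space can be written as $\mathcal{A}=\{x\in\bbR^d:\langle c_j,x\rangle=d_j,\; j\in[M]\}$, each equality being the intersection of two opposite closed half-spaces. Consequently $\mathcal{S}\cap\mathcal{A}$ is the intersection of finitely many closed half-spaces, hence a polyhedron; it is bounded because $\mathcal{S}$ is bounded, and so (if nonempty) is a convex polytope. Part (2) follows immediately from the same linear-system description: $\mathcal{A}\cap\mathcal{A}'=\{x:\langle c_j,x\rangle=d_j,\;\langle c_j',x\rangle=d_j'\}$, which, whenever consistent, is the solution set of a linear system and therefore another affine space (with the convention that empty is not an affine space).

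For parts (3) and (4), the key point is that any affine space $\mathcal{B}\subseteq\mathcal{A}$ of the same dimension as $\mathcal{A}$ must satisfy $\mathcal{B}=\mathcal{A}$; this is because both are translates of linear subspaces of equal dimension with one contained in the other, forcing equality of the underlying linear subspaces and hence of the affine spaces (which share a point). Applying this with $\mathcal{B}=\mathcal{A}\cap\mathcal{A}'$ and using part (2), in part (3) either $\mathcal{A}\cap\mathcal{A}'$ is empty (which I will count as having strictly lower dimension by convention), or it is an affine space contained in the equal-dimensional $\mathcal{A}$ and $\mathcal{A}'$; if its dimension is not strictly less than $\dim\mathcal{A}$, the observation gives $\mathcal{A}\cap\mathcal{A}'=\mathcal{A}=\mathcal{A}'$. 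Part (4) is analogous: $\mathcal{A}\cap\mathcal{A}'\subseteq\mathcal{A}$, so $\dim(\mathcal{A}\cap\mathcal{A}')\le \dim\mathcal{A}$, and equality (with nonempty intersection) forces $\mathcal{A}\cap\mathcal{A}'=\mathcal{A}$, i.e.\ $\mathcal{A}\subseteq\mathcal{A}'$.

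The only mild obstacle is the treatment of empty intersections in (3) and (4); I will adopt the convention used throughout the paper that the empty set is regarded as having dimension strictly less than any nonempty affine space, which keeps the dichotomies in the statements literally true. With that convention, the proofs reduce to the observations above plus the linear-algebraic fact that an affine subspace of an affine space having full dimension must coincide with it, so no nontrivial calculation is required.
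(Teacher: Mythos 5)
Your proposal is correct and takes essentially the same approach as the paper: parts (1)–(2) by the H-representation of affine spaces and polytopes as solution sets of linear (in)equalities, and parts (3)–(4) by reducing to the corresponding statement about the linear subspaces underlying the affine spaces. The only differences are cosmetic improvements on your side: you explicitly note that the intersection in part (1) is bounded (so a polytope rather than just a polyhedron), you handle the empty-intersection case in parts (3)–(4) via an explicit dimension convention, and in parts (3)–(4) you invoke the cleaner abstract fact that a sub-affine-space of full dimension must coincide with the ambient affine space rather than explicitly computing $\mathcal{A}\cap\mathcal{A}'=(S\cap S')+x$ as the paper does; all of these are equivalent and standard.
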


The proof is straightforward and is included in the appendix for completeness. We state the key assumptions that we require in this section, which are both arguably mild (so as to avoid degenerate situations).
\begin{enumerate}[label={(B\arabic*)}]
    \item For any two maximal paths $\varphi$ and $\varphi'$, if $\dim \conv (\rho(\varphi)) = \dim \conv (\rho(\varphi'))$, then $\aff(\rho(\varphi)) \neq \aff(\rho(\varphi'))$. \label{assume:diff_affine_space}
    \item $\pi(\varphi)>0$ for any path $\varphi$ in $\cT$. \label{assume:pos}
\end{enumerate}
Now, we are ready to state the first identifiability theorem.

\begin{theorem}
    \label{thm:identifiability1}
    For any two parameter $\omega=(\cT,\rho,\pi)\in \Omega(I)$ and $\omega'=(\cT',\rho',\pi')\in \Omega(I')$, if both satisfy the assumptions \ref{assume:diff_affine_space} and \ref{assume:pos}, then the following statements are equivalent:

    \begin{itemize}
    \item [(a)]
    $d_{\TV}(P_{n,\omega,\alpha}, P_{n,\omega',\alpha'}) = 0$ for all $n=1,2,\ldots$.
    \item [(b)]
    $d_{\TV}(P_{n,\omega,\alpha}, P_{n,\omega',\alpha'}) \to 0 \text{ as } n\to\infty$. 
    \item [(c)]
    $I=I' \text{ and } d_{\bbH+}\left(\omega,\omega'\right)=0.$
    \end{itemize}
\end{theorem}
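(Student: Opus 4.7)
The plan is to prove the three equivalences in the natural cycle (a) $\Rightarrow$ (b) $\Rightarrow$ (c) $\Rightarrow$ (a). The implication (a) $\Rightarrow$ (b) is immediate. For (c) $\Rightarrow$ (a), I would invoke the second part of Proposition \ref{lemma: tree-directed identifies tree structure}: under $d_{\bbH+}(\omega,\omega')=0$ and $I=I'$, together with Assumption \ref{assume:all_exposed} (implied here because each component has $J_i$-many vertices in general position under Assumption \ref{assume:diff_affine_space}), the DRTs are isomorphic via a unique $\sigma$, the topic values match exactly along this isomorphism, and the path probabilities satisfy $\pi(\varphi) = \pi'(\sigma(\varphi))$. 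Provided the Dirichlet parameters $\alpha$ and $\alpha'$ are aligned through the same $\sigma$, the pushforwards $G_i$ and $G_{\sigma(i)}'$ coincide; substituting into Equation \eqref{eq: document density2} yields $p_{n,\omega,\alpha} = p_{n,\omega',\alpha'}$ pointwise for every $n$, hence zero total variation.

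The substantive direction is (b) $\Rightarrow$ (c), which I would prove in two geometric stages. First I would promote the document-level TV convergence to equality of the latent mixing measures on $\Delta^{V-1}$. Let $\bar{\bX}_n$ denote the document-mean vector under $P_{n,\omega,\alpha}$. Conditional on $\eta \sim G(\omega,\alpha)$, the law of $\bar{\bX}_n$ is multinomial with mean $\eta$ and hence concentrates on $\eta$ as $n\to\infty$; marginalising gives weak convergence $\bar{\bX}_n \Rightarrow G(\omega,\alpha)$, and analogously for $\omega'$. Because pushforward is a contraction for TV, the hypothesis (b) forces the TV distance between the marginals of $\bar{\bX}_n$ under the two models to go to zero. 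Combined with the two weak limits, this forces $G(\omega,\alpha) = G(\omega',\alpha')$ as Borel measures. The second stage, which I expect to be the main obstacle, is to show that this single mixing measure $G = \sum_{i\in[I]} \pi_i G_i$ admits only one decomposition into components indexed by the paths of a DRT. Writing $G = G'$ and stratifying the support $\cup_i \cS_i$ by affine dimension, I would use Lemma \ref{lemma: intersection affine space} together with Assumption \ref{assume:diff_affine_space} to argue that two components $\cS_i$, $\cS_j'$ of equal dimension must either share the same affine span or meet in a set of strictly lower Hausdorff dimension; since each $G_i$ is absolutely continuous with respect to the relevant $(J_i{-}1)$-dimensional Hausdorff measure on $\aff(\cS_i)$, the identity $G = G'$ when restricted to each maximal affine piece of the support isolates $\pi_i G_i$ from $\pi_j' G_j'$. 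Assumption \ref{assume:pos} ensures that no component disappears, so the collection $\{(\cS_i,\pi_i)\}_{i\in[I]}$ equals $\{(\cS_j',\pi_j')\}_{j\in[I']}$ as multisets; in particular $I = I'$.

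Once the component polytopes and their weights are matched, I would read off the topics as the extreme points of the $\cS_i$ (using Assumption \ref{assume:all_exposed}, which holds automatically when the $J_i$ topics in each path are in general position as guaranteed by \ref{assume:diff_affine_space}), and then invoke Lemma \ref{lemma:tree}: since the multiset of path-vertex-sets $\{\text{set}(\rho(\varphi_i))\}$ coincides with $\{\text{set}(\rho'(\varphi_j'))\}$, the two DRTs are isomorphic, and the natural bijection between paths matches both the polytopes (hence all vertex positions) and the probabilities. Plugging this bijection into the definition of $d_{\bbH+}$ in \eqref{def: augmented tree directed Hausdorff metric} gives zero on each summand, yielding (c). The delicate technical point throughout is justifying that the affine-dimension stratification of $\supp G$ truly separates the components; the combination of Assumption \ref{assume:diff_affine_space} (distinct affine hulls within each dimension class) and parts 3--4 of Lemma \ref{lemma: intersection affine space} (intersections drop dimension unless the spans coincide) is precisely what makes this stratification well-defined and unambiguous.
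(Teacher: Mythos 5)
Your plan follows the paper's own proof closely: you establish the cycle (a)$\Rightarrow$(b)$\Rightarrow$(c)$\Rightarrow$(a), and for the substantive (b)$\Rightarrow$(c) direction you (i) pass from $d_{\TV}(P_{n,\omega,\alpha},P_{n,\omega',\alpha'})\to 0$ to equality of the mixing measures $G(\omega,\alpha)=G(\omega',\alpha')$ via weak convergence of the document means plus data processing, and (ii) decompose $G=G'$ into its components by restricting to affine pieces of the support, invoking Lemma~\ref{lemma: intersection affine space} together with Assumption~\ref{assume:diff_affine_space}. This is precisely the paper's argument; the paper carries out the decomposition by iteratively peeling from the lowest-dimensional component so that only one component can contribute to each restriction, but your stratification-by-dimension is the same idea and the iteration is what makes it fully rigorous.

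There is, however, one concrete error: you twice assert that Assumption~\ref{assume:all_exposed} ``holds automatically'' because the topics on a path are ``in general position as guaranteed by~\ref{assume:diff_affine_space}.'' This implication is false. Assumption~\ref{assume:diff_affine_space} only constrains the affine hulls of \emph{distinct} component polytopes of equal dimension; it says nothing about whether each individual topic along a \emph{single} path is an extreme point of that path's convex hull. A topic could lie in the relative interior of the segment joining two others on the same path, violating~\ref{assume:all_exposed}, while~\ref{assume:diff_affine_space} still holds. Fortunately this is a detour rather than a fatal gap in your (b)$\Rightarrow$(c) argument: once you have matched the multisets $\{(\cS_i,\pi_i)\}$ and $\{(\cS_j',\pi_j')\}$, the conclusion $d_{\bbH+}(\omega,\omega')=0$ follows directly from the definition~\eqref{def: augmented tree directed Hausdorff metric}, with no need to extract individual topics, invoke~\ref{assume:all_exposed}, or apply Lemma~\ref{lemma:tree} to recover a tree isomorphism. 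Indeed the remark after these theorems notes explicitly that Assumption~\ref{assume:all_exposed} is not required because $d_{\bbH+}$ depends only on the component supports and weights. You should excise the false implication and the tree-isomorphism coda; for (c)$\Rightarrow$(a), note that the caveat you raise about aligning $\alpha,\alpha'$ is a real one and should be stated explicitly rather than smoothed over by an appeal to the nonexistent~\ref{assume:diff_affine_space}$\,\Rightarrow\,$\ref{assume:all_exposed} step.
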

$(a)\Rightarrow (b)$ trivially, while $(c)\Rightarrow (a)$ follows immediately from the conclusion of Proposition \ref{lemma: tree-directed identifies tree structure} (part 2). The proof for $(b)\Rightarrow (c)$ is given in Appendix \ref{app:proof of identifiability1}, but here is a high-level proof sketch. As $n\to\infty$, the distribution of the sample mean of $\doc$ converges to the noiseless distribution $G(\omega,\alpha)$ of the latent variable $\eta$, as discussed at the beginning of this section. This probability measure is a mixture of components where each component is a probability measure supported on a polytope (constituent topic polytope) and these component measures are absolutely continuous with respect to the Hausdorff measure on the respective affine spaces. The assumption on the topic map guarantees that topic polytopes of the same dimension do not share a common affine hull. Suppose we start with this mixture probability measure and try to reconstruct its components. Consider the component whose polytope has the smallest number of dimensions (if multiple, select one arbitrarily). If we take the restriction of the whole measure on the affine hull of this polytope, the only contribution comes from just this component. We argue that there cannot be any component whose polytope has smaller number of dimensions and then argue that there must be a unique component which matches it. We continue this process of eliminating one component at a time until all components have been exhausted. This process uniquely identifies all the constituent topic polytopes and associated probability measures supported on such polytopes. 

\begin{remark}
There are additional remarks regarding the above theorem.
\begin{enumerate}
    \item The theorem can also be expressed as follows: for $\omega\in\Omega(I), \omega'\in\Omega(I')$ satisfying stated assumptions and $\alpha, \alpha'$ (seen as maps $\Phi(\cT)\mapsto \bbR_+$ on the associated set of paths), $G(\omega,\alpha)=G(\omega',\alpha')\Rightarrow I=I', d_{\bbH+}(\omega,\omega')=0$. Note that $d_{\bbH+}$ is a metric on $\Omega(I)$ and thus is applicable only if the underlying DRTs have same number of paths. The theorem guarantees that this is indeed the case and that the topics and their hierarchy can be identified. We emphasize the fact that the theorem requires no restriction on the number of paths in the associated DRTs — hence the model size is also identifiable in this sense.
    \item It is crucial that both $\omega, \omega'$ satisfy the assumptions (B1) and (B2). This is required since we do not make assumptions about the model size (e.g. $I$ or the underlying DRT). If only one, say $\omega$, satisfies the assumptions, it is not hard to see that there exists $\omega'$ (which would have to violate at least one of the assumptions) with underlying DRT $\cT'\ncong \cT$, along with some $\alpha,\alpha'$ such that $G(\omega,\alpha)=G(\omega',\alpha')$, where $\cT, \cT'$ are the corresponding DRTs for $\omega, \omega'$.
    \item We emphasize that we do not require specific parametric distribution for $\beta$ within each path — any continuous probability distribution on $\Delta^{J_i-1}$ would work (although in modeling practice the Dirichlet distribution is a natural choice as in the LDA) . For any such $P^{(i)}$ for path $i$, the component measure $G_i = P^{(i)}_{\#}L_i$, where $L_i(\beta)=\Theta_i^\top \beta$ (see Section \ref{sec:model-geom}) is supported on $\cS_i=\conv \Theta_i$, is absolutely continuous with respect to the Hausdorff measure on $\cA_i=\aff \cS_i$ and the density is positive in the interior of the support — this is all we require in the result. If we specialize to the case where the symmetric Dirichlet is employed in our model formulation, then the theorem entails that the $\alpha$ parameter for each component is also uniquely identifiable, i.e. $G(\omega,\alpha)=G(\omega',\alpha') \Rightarrow I=I', d_{\bbH+}(\omega,\omega')=0$ and $\alpha=\alpha'$.
\end{enumerate}
\end{remark}
The above result implies that given two distinct parameters $\omega, \omega'$ (satisfying the assumptions \ref{assume:diff_affine_space} and \ref{assume:pos}), there exists $n$ such that $d_{\TV}(P_{n,\omega,\alpha}, P_{n,\omega',\alpha'})>0$, i.e., there is a sufficiently large document size which makes these two parameters distinguishable from the data (Note here the distinct $\omega, \omega'$ are distinguishable under the metric $d_{\bbH+}$).

For parameter estimation, either by a maximizing log likelihood estimator or applying the Bayesian framework, we are interested in estimating $\omega$ based on $\doc^1,\dots,\doc^m \sim  P_{n,\omega^*,\alpha^*}$ for some \textit{true} parameter $\omega^*,\alpha^*$ and to be consistent, we want $\hat{\omega}_{m,n} \to \omega^*$ as $m,n\to\infty$, where $\hat{\omega}_{m,n}$ is some estimator based on $\doc^1,\dots,\doc^m$. For the above result to be useful, we need to restrict the parameter space to some $\Omega_A\subset \cup_I \Omega(I)$, where $\Omega_A$ is such that for any sequence $\omega_n\in\Omega_A$, any limit point of the sequence should satisfy assumptions \ref{assume:diff_affine_space} and \ref{assume:pos}. Thus, even if the true parameter $\omega^*$ satisfies the assumptions, under either the MLE or Bayesian framework this would require making stronger restrictions on the parameter space and require such restrictions be known to the statistical modeller. Instead of this requirement, we would rather want to place minimal restrictions on the parameter space (or the prior) and hope to recover unique latent structure based on data, assuming \emph{only} that the true latent structure satisfies such assumptions. The following theorem is motivated from such a practical consideration. 

\begin{theorem}
    \label{thm:identifiability2}
    Consider $\omega=(\cT,\rho,\pi)\in \Omega(I)$ satisfying assumptions \ref{assume:diff_affine_space} and \ref{assume:pos}. Then, for any $\omega'=(\cT',\rho,\pi)\in \Omega(I)$ the following statements are equivalent:

    \begin{itemize}
    \item [(a)]
    $d_{\TV}(P_{n,\omega,\alpha}, P_{n,\omega',\alpha'}) = 0$ for all $n=1,2,\ldots$.
    \item [(b)]
    $d_{\TV}(P_{n,\omega,\alpha}, P_{n,\omega',\alpha'}) \to 0 \text{ as } n\to\infty$. 
    \item [(c)]
    $d_{\bbH+}\left(\omega,\omega'\right)=0.$
    \end{itemize}
\end{theorem}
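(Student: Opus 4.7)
The plan is to establish the cyclic implications $(a)\Rightarrow(b)\Rightarrow(c)\Rightarrow(a)$. The implication $(a)\Rightarrow(b)$ is immediate. For $(c)\Rightarrow(a)$, I would invoke Proposition \ref{lemma: tree-directed identifies tree structure}(2): $d_{\bbH+}(\omega,\omega')=0$ produces a bijection between the paths of $\cT$ and $\cT'$ under which the component topic polytopes coincide as closed sets (Hausdorff distance is a metric on closed sets, and polytopes are closed) and the path probabilities match. Assuming the Dirichlet parameters correspond under this bijection, the component pushforward measures $G_i$ match component-wise, so the latent mixing measures $G(\omega,\alpha)=G(\omega',\alpha')$ agree, and hence $P_{n,\omega,\alpha}=P_{n,\omega',\alpha'}$ for every $n$.

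The heart of the argument is $(b)\Rightarrow(c)$. I would first use the weak convergence of the document-mean statistic $\hat{\eta}$ to $\eta\sim G$ as $n\to\infty$ (exactly as at the end of Section \ref{sec:model-geom}) to conclude from $d_{\TV}(P_{n,\omega,\alpha},P_{n,\omega',\alpha'})\to 0$ that $G(\omega,\alpha)=G(\omega',\alpha')$ as probability measures on $\Delta^{V-1}$. Then I would mirror the peeling/decomposition argument of Theorem \ref{thm:identifiability1}: working on the $\omega$ side, Assumption \ref{assume:diff_affine_space} guarantees that among components of smallest dimension $d_1$, restricting $G$ to the affine hull $\cA_i$ of any such component isolates $\pi_i G_i$ (components of higher dimension contribute zero mass to the $d_1$-dimensional Hausdorff measure on $\cA_i$, and distinct components at dimension $d_1$ live on distinct affine hulls by \ref{assume:diff_affine_space}). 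Iterating upward in dimension yields the unique decomposition $\{(\pi_i,G_i,\cA_i)\}_{i\in[I]}$, and \ref{assume:pos} ensures all $I$ pieces are nontrivial.

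The main departure from Theorem \ref{thm:identifiability1} is that $\omega'$ is not assumed to satisfy \ref{assume:diff_affine_space} or \ref{assume:pos}, so the task becomes forcing these properties on $\omega'$ from the single premise $I'=I$ together with $G(\omega,\alpha)=G(\omega',\alpha')$. I would argue by contradiction. If some $\pi'_j=0$, then at most $I-1$ nontrivial components of $\omega'$ contribute to $G$, incompatible with the $I$ distinct nontrivial pieces recovered from $\omega$ in the peeling decomposition. If two components $G'_{j_1},G'_{j_2}$ share the same affine hull $\cA$ at the same dimension, the restriction of $G$ to $\cA$ (modulo contributions of strictly lower-dimensional intersections with other affine hulls, which are null) matches, from the $\omega$ side, exactly one Dirichlet pushforward $\pi_i G_i$ on a single polytope $\cS_i\subset\cA$, whereas from the $\omega'$ side it equals $\pi'_{j_1}G'_{j_1}+\pi'_{j_2}G'_{j_2}$ on the (possibly different) polytopes $\cS'_{j_1},\cS'_{j_2}\subset\cA$. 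Ruling out such an equality, via the rigidity of Dirichlet pushforwards on polytopes (unique minimal support, continuous strictly positive density on the relative interior of that support, and vanishing outside), is the crux. Having established that $\omega'$ satisfies \ref{assume:diff_affine_space} and \ref{assume:pos}, Theorem \ref{thm:identifiability1} applies directly to yield $d_{\bbH+}(\omega,\omega')=0$.

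The main obstacle I anticipate is precisely this rigidity step: showing that $\pi_i G_i=\pi'_{j_1}G'_{j_1}+\pi'_{j_2}G'_{j_2}$ forces $\cS'_{j_1}=\cS'_{j_2}=\cS_i$, after which the convexity of Dirichlet pushforwards in their parameter gives the collapse to a single component. My plan here is to first compare supports — if either $\cS'_{j_k}\not\subset\cS_i$, then the RHS puts mass outside $\cS_i$ while the LHS does not, forcing $\cS'_{j_k}\subset\cS_i$; conversely, if $\cS'_{j_k}\subsetneq\cS_i$, then near a boundary point of $\cS'_{j_k}$ lying in the relative interior of $\cS_i$, the RHS density has a jump while the LHS is continuous and positive, a contradiction. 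This pins down $\cS'_{j_1}=\cS'_{j_2}=\cS_i$, after which matching Dirichlet densities on a common polytope forces the $\omega'$-side mixture to be trivial, contradicting two genuinely distinct components and completing the reduction to Theorem \ref{thm:identifiability1}.
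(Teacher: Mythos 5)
Your overall strategy is sound, and the treatment of Assumption \ref{assume:pos} is correct: if some $\pi'_j = 0$, the remaining at most $I-1$ components of $\omega'$ cannot account for the $I$ pieces of $G$ that the $\omega$-side peeling extracts, since those $I$ pieces live on $I$ pairwise-distinct affine hulls (distinct because $\omega$ satisfies \ref{assume:diff_affine_space}), and each $\omega'$-component, being supported on a single affine hull, can match at most one of them at the correct dimension. That is a counting argument, and it is exactly the right tool.

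The gap is in your treatment of Assumption \ref{assume:diff_affine_space}, where you abandon the counting argument in favor of an analytic ``rigidity of Dirichlet pushforwards'' step. Two problems. First, your boundary-jump argument is not sound across the full parameter range: for symmetric Dirichlet parameter $\alpha > 1$ the pushforward density $g'_{j_1}$ vanishes \emph{continuously} as one approaches $\partial\cS'_{j_1}$ from the interior, so the mixture density $\pi'_{j_1}g'_{j_1} + \pi'_{j_2}g'_{j_2}$ has no jump there and can in principle remain positive and continuous, matching the LHS pointwise; your contradiction evaporates. Second, even granting $\cS'_{j_1}=\cS'_{j_2}=\cS_i$, the claim that a single Dirichlet pushforward cannot equal a nontrivial mixture of two Dirichlet pushforwards on the same polytope (with different $\alpha$'s, which the theorem explicitly permits via $\alpha \neq \alpha'$) is genuinely nontrivial and unproved; moreover, the theorem's remarks emphasize that the identifiability argument should not hinge on the Dirichlet form at all, which a density-rigidity argument unavoidably would. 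The fix is that you do not need rigidity: the same counting argument you already used for \ref{assume:pos} closes the \ref{assume:diff_affine_space} case. If $G'_{j_1}$ and $G'_{j_2}$ both have affine hull $\cA_1$ and together account for $\pi_1 G_1$, then the remaining $I-2$ components $\{G'_j : j \neq j_1, j_2\}$ must cover the remaining $I-1$ pieces $\pi_2 G_2, \ldots, \pi_I G_I$, each supported on a distinct affine hull, with each $G'_j$ able to cover at most one. Since $I-2 < I-1$, this is impossible, so each affine hull of $\omega$ is matched by exactly one $\omega'$-component. This is precisely the route the paper takes; it sidesteps all questions about the internal structure of the component measures and then proceeds by the same iterative peeling as in Theorem~\ref{thm:identifiability1}.
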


Note the difference between Theorems \ref{thm:identifiability1} and \ref{thm:identifiability2}. In Theorem \ref{thm:identifiability2}, no assumption is required on $\omega'$, while it was crucial that $\omega'$ also satisfied the assumptions in Theorem \ref{thm:identifiability1}. Instead, Theorem \ref{thm:identifiability2} requires the knowledge of the number of paths -- note that for this result, both $\omega,\omega'\in \Omega(I)$ ---- thus their underlying DRTs have the same number of paths. Since the number of paths accounts for the number of components in the overall mixture structure of the model, this assumption ensures that for each component of $\omega$ there can only be at most one component of $\omega'$ to match it.

\begin{remark}
    Assumption \ref{assume:all_exposed} is not required in either of the results in this section. The reason is that the metric $d_{\bbH+}$ only cares about the support of the individual components — thus, the component measures can be identified. Assumption \ref{assume:all_exposed} would be additionally required to identify \textit{all} the topics and the topic hierarchy, as established by Proposition \ref{lemma: tree-directed identifies tree structure}.
\end{remark} 

\begin{figure}
    \centering
\includegraphics[clip, trim=10cm 5.5cm 10cm 6.6cm, width=0.95\textwidth]{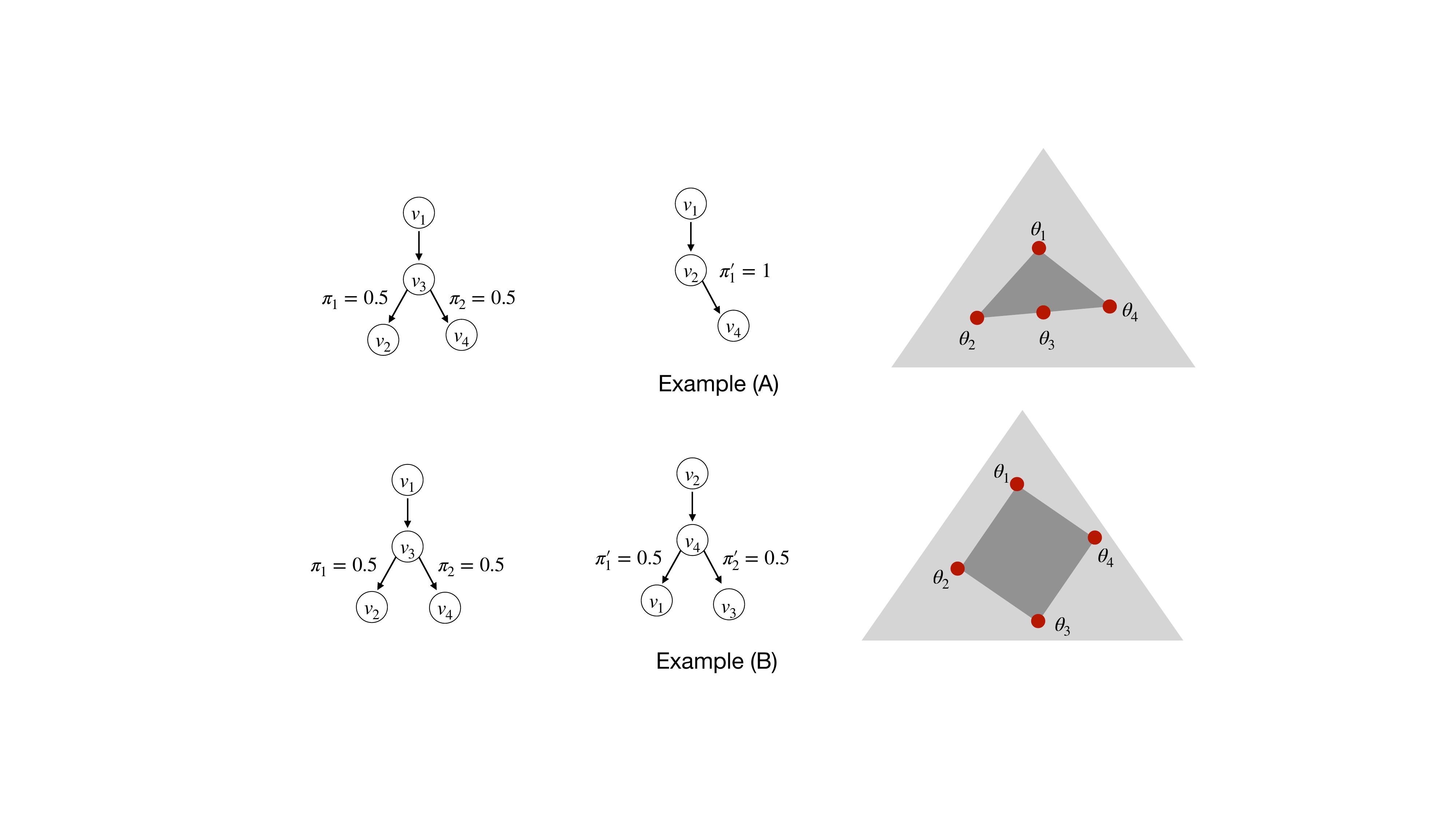}
    \caption{Examples of non-identifiability when Assumption \ref{assume:diff_affine_space} are violated — Examples (A) and (B) respectively. The outer light gray triangle is the vocabulary simplex and the dark gray region represent the noiseless probability measure under the parameter choices as shown. In both cases, the topic map is $\rho:v_\ell\to \theta_\ell$ for all $\ell$. Note that in (A) two different trees with $I\neq I'$ led to the same model, which for (B), the underlying DRT is the same, as well as the set of topics — but two very different topic hierarchies lead to the same model.} 
    \label{fig:DRT_nonidentifiable}
\end{figure}

\begin{remark} Here are examples of non-identifiability that arises if we drop the above assumptions. Two such examples are given in Figure \ref{fig:DRT_nonidentifiable}. In the first example, we see two distinct DRTs with different number of paths leading to the same model (the underlying Dirichlet is assumed with parameter 1, i.e., a uniform distribution on the associated component simplexes). The problem arises since polytopes $\conv (\theta_1,\theta_2,\theta_3)$ and $\conv (\theta_1,\theta_3,\theta_4)$ have the same affine hull. In the second example, we see the same DRT (i.e., isomorphic) yet the topic hierarchies are different. For this set of topics ($\theta_1,\theta_2,\theta_3,\theta_4$ for the vertices of a rectangle here) the decomposition of $\cup_i \cS_i$ into $\cS_1$ and $\cS_2$ become non-unique (since they have the same affine hull). We also note that if $\theta_4$ was not in the affine hull of $\conv (\theta_1,\theta_2,\theta_3)$, then the two topic hierarchies would lead to two distinct models (note that in the former $\theta_1$ is shared between the two components, while in the latter $\theta_2$). Thus, without regularity conditions such as Assumptions (B1) and (B2), the knowledge of the underlying DRT and the set of topics is not enough to determine the latent structure, the topic hierarchy, in this model.
\end{remark}

\section{Posterior Contraction Rates}\label{sec:rate}

We turn our attention to convergence behaviors of the density and parameters in our model, under a Bayesian well-specified and identifiable model setting. Given the $m\times n$ data
$$\doc^1,\dots,\doc^m \overset{iid}{\sim} P_{n,\omega, \alpha}$$
where we know the underlying DRT up to an isomorphism. Consider $\omega, \omega'$ such that $\cT\cong \cT'$, then it is enough to consider a single tree. Indeed, since they are isomorphic, let $\sigma$ be an isomorphism and $\tilde{\sigma}$ be the corresponding bijection between $\Phi(\cT)$ and $\Phi(\cT')$, then instead of $\omega'$, we can consider $(\cT, \rho'\circ\sigma, \pi'\circ \tilde{\sigma})$. Thus, it is reasonable to fix a common DRT for both $\omega$ and $\omega'$. Assume that $\omega\in\Omega(\cT)$ for a fixed $\cT$. Note that this does not mean that the topic hierarchy is known, as the identifiability theory has established in the previous section.
Furthermore, we are interested in the estimation of $\rho$ specifically, since $\rho$ and $\cT$ combine to determine the topics as well as the topic hierarchy, which is our target. Hence, in this section, we assume that $\alpha$ is known, i.e. $\alpha(\varphi)=\alpha_0>0$, known and fixed (we remark that what we just require is that the distribution $p_{\beta}$ for each path only depends on the path length — i.e., for symmetric Dirichlet case, we need $\alpha(\varphi)=\alpha_{J}$ where $J=|\varphi|$ and $\{\alpha_J\}_{J\in\bJ}$ known), as such we drop the $\alpha$ in our notation. Moreover, for simplicity of exposition, in this section we assume that each path of $\cT$ has the same length $J$.

Assume that the $m\times n$ data set is generated from true parameter $\omega^*\in \Omega(\cT)$, where $\cT\in \mathfrak{T}(I,J)$. We place an appropriate prior for $\omega\sim \Pi$ on $\Omega(\cT)$ and are interested in the posterior distribution of $p_{n,\omega,\alpha_0}$ in Section \ref{sec:density estimation}, in particular the contraction rate of the posterior to the truth $p_{n,\omega^*,\alpha_0}$ in terms of the Hellinger metric as $m,n\to\infty$. We shall adopt the standard Bayesian asymptotic framework to address this problem \citep {GhosalVaartFundamentalsNonparametricBayesian2017}. In Section \ref{sec:parameter estimation}, we are interested in the posterior distribution of $\omega$ itself and study the contraction rate of the posterior to the truth $\delta_{\omega^*}$ under an appropriate metric $d$. To this end, we establish an inverse bound of the form $d_{\TV}(p_{n,\omega}, p_{n,\omega'}) \gtrsim d_{\cU\cH}(\omega,\omega')$, which allows us to use the density estimation rate to prove an upper bound for the parameter estimation rate in the metric $d_{\cU\cH}$, which we shall define shortly. We utilize and extend the techniques developed in \cite{NguyenPosteriorContractionPopulation2015} — however, in our case, the latent measure $G$ is no longer supported on a single convex polytope as in \cite{NguyenPosteriorContractionPopulation2015}; here it is supported on a union of such convex polytopes, which requires novel modification to the inverse bound.

\subsection{Density estimation}\label{sec:density estimation}
We start our study of posterior density contraction rates by deriving an upper bound on the KL divergence in terms of the $W_1$ distance between the component measures $\{G_i:i\in[I]\}$. This result is crucial to understanding the behavior of the model in a KL-neighborhood of the truth. We aim to derive a connection between the KL divergence between the document-distributions and the distance between the associated parameters, in terms of the augmented tree-directed Hausdorff metric. This would allow us to control certain properties in the density space $\{p_{n,\omega}:\omega\in\Omega(\cT)\}$ in terms of the underlying parameter space $\Omega(\cT)$. 

\begin{lemma}\label{lemma: kl_upper_W}
    Suppose $\omega=(\rho,\pi), \omega'=(\rho',\pi')\in \Omega(\cT)$. Assume that both $\omega, \omega'$ satisfy
    \begin{enumerate}[label=({C}\arabic*)]
        \item  $\min_{u \in \cV} \rho(u)_v \geq c_0$ for all $v\in [V]$ for some constant $c_0 > 0$.\label{assume: bound topic boundary}
        \item $\pi(\varphi) \geq c_1$ for all paths $\varphi\in\Phi(\cT)$ for some constant $c_1 > 0$. \label{assume: bound path probability}
    \end{enumerate}
    Then, for any permutation $\sigma\in \bbS(I)$, we have
    $$\KL(p_{n,\omega}\Vert p_{n,\omega'}) \leq \frac{n}{c_0}\sum_{i\in[I]} \pi_i W_1\left(G_i, G_{\sigma(i)}'\right) + \frac{1}{c_1}\sum_{i\in[I]} \pi_i |\pi_i - \pi_{\sigma(i)}'|,$$
    where $G_i, \pi_i$ are the latent component measure and path probability associated to the $i-$th path of $\cT$.
\end{lemma}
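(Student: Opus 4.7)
The plan is to leverage the mixture structure of $p_{n,\omega}$ and $p_{n,\omega'}$: viewing them as $I$-component mixtures indexed by the paths of $\cT$, I would write $p_{n,\omega}(\doc)=\sum_i \pi_i p_i(\doc)$ with $p_i(\doc)=\int f(\doc\mid\eta)\,G_i(d\eta)$ and $f(\doc\mid\eta)=\prod_v \eta_v^{m_v}$, and the analogous expression for $\omega'$ using the relabeling induced by the permutation $\sigma$. Augmenting $\doc$ by the latent path label $c$ yields joint laws on $(c,\doc)$ whose KL decomposes via the chain rule into a discrete KL between the mixing weights plus a weighted sum of component KLs; data processing applied to the marginalization $(c,\doc)\mapsto \doc$ then gives
$$\KL\!\left(p_{n,\omega}\,\Vert\,p_{n,\omega'}\right)\leq \KL\!\left(\pi\,\Vert\,\pi'\!\circ\!\sigma\right) + \sum_{i\in[I]} \pi_i\, \KL\!\left(p_i\,\Vert\,p'_{\sigma(i)}\right).$$

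For the weights piece, I would apply $\log x\leq x-1$ to each summand: $\pi_i\log(\pi_i/\pi'_{\sigma(i)}) \leq \pi_i(\pi_i - \pi'_{\sigma(i)})/\pi'_{\sigma(i)}$, which (regardless of sign) is dominated by $\pi_i|\pi_i-\pi'_{\sigma(i)}|/\pi'_{\sigma(i)}\leq \pi_i|\pi_i-\pi'_{\sigma(i)}|/c_1$ by assumption \ref{assume: bound path probability}, delivering the second summand of the target bound. For each component KL, the plan is a coupling-based data processing argument. Fix an arbitrary coupling $\gamma_i$ of $G_i$ and $G'_{\sigma(i)}$ and consider joint distributions $P,Q$ on $(\eta,\eta',\doc)$: under both, draw $(\eta,\eta')\sim \gamma_i$, then $\doc\mid\eta\sim f(\cdot\mid\eta)$ under $P$ and $\doc\mid\eta'\sim f(\cdot\mid\eta')$ under $Q$. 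Since the $\doc$-marginals are $p_i$ and $p'_{\sigma(i)}$, data processing yields
$$\KL\!\left(p_i\,\Vert\,p'_{\sigma(i)}\right)\leq \int \KL\!\left(f(\cdot\mid\eta)\,\Vert\,f(\cdot\mid\eta')\right)\gamma_i(d\eta,d\eta').$$
Because $f(\cdot\mid\eta)$ is a product of $n$ i.i.d.\ $\text{Cat}(\eta)$ factors, the inner KL equals $n\sum_v\eta_v\log(\eta_v/\eta'_v)$. Assumption \ref{assume: bound topic boundary} forces $\theta_{kv}\geq c_0$, so any convex combination of topics satisfies $\eta_v,\eta'_v\geq c_0$; combining $\log x\leq x-1$ with this lower bound gives $\eta_v\log(\eta_v/\eta'_v)\leq |\eta_v-\eta'_v|/c_0$, hence $\KL(f(\cdot\mid\eta)\,\Vert\,f(\cdot\mid\eta'))\leq (n/c_0)\|\eta-\eta'\|_1$. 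Minimizing over couplings converts the integral into $W_1(G_i,G'_{\sigma(i)})$, and assembling the two pieces delivers the claim.

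The main subtlety is the careful application of the data processing inequality in two distinct places: first, to compress a chain-rule expansion on $(c,\doc)$ into the mixture-KL upper bound, and second, to reduce the component KL to an integral over a coupling of latent measures so that taking the infimum produces $W_1$. Everything else is an elementary sign-aware use of $\log x\leq x-1$ combined with the two explicit lower bounds $c_0$ and $c_1$; in particular, the $L_1$-Wasserstein metric enters naturally because $\text{Cat}(\eta)$ is parametrized linearly by $\eta$ and because the entries of $\eta$ are uniformly bounded away from zero, which is precisely what converts a transport cost on $\Delta^{V-1}$ into a linear-in-$n$ bound on the per-component document KL.
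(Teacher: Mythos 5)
Your proof is correct, and the overall route is the same as the paper's: split the KL between the two $I$-component mixtures into a KL on the path weights plus a $\pi$-weighted sum of per-path component KLs, then bound the two pieces using the $c_1$ and $c_0$ lower bounds respectively. The paper obtains the decomposition by a direct application of the log-sum inequality to $\sum_i a_i\log(a_i/b_i)$; you arrive at the same decomposition via the KL chain rule on the augmented pair $(c,\doc)$ followed by the data-processing inequality for marginalization — these are interchangeable, and your version is arguably cleaner to read. For the weights term, the paper uses $\log(\pi_i/\pi_i')\le|\log\pi_i-\log\pi_i'|$ followed by the mean-value bound, while you use $\log x\le x-1$; both produce $\frac{1}{c_1}\sum_i\pi_i|\pi_i-\pi'_{\sigma(i)}|$. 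The only substantive difference is the per-path component KL bound $\KL(p_i\Vert p'_{\sigma(i)})\le\frac{n}{c_0}W_1(G_i,G'_{\sigma(i)})$: the paper outsources this to Lemma 6 of \cite{NguyenPosteriorContractionPopulation2015}, whereas you reprove it with a coupling of $G_i$ and $G'_{\sigma(i)}$, data processing, tensorization of the categorical KL, and the elementary bound $\eta_v\log(\eta_v/\eta'_v)\le|\eta_v-\eta'_v|/c_0$ (valid since Assumption~\ref{assume: bound topic boundary} propagates $\eta_v,\eta'_v\ge c_0$ to convex combinations of topics). This makes your argument self-contained — a genuine improvement in exposition, since it is precisely the content of the cited lemma. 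One small point worth flagging: the coupling step produces $W_1$ with $\ell_1$ ground cost (because the bound is in terms of $\|\eta-\eta'\|_1$); you should confirm this is the convention used for $W_1$ here, or else note that any discrepancy is at most a benign $\sqrt{V}$ factor between $\ell_1$ and $\ell_2$ on $\Delta^{V-1}$.
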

The assumptions state that all associated topics are uniformly bounded away from the boundary of $\Delta^{V-1}$, a condition required to control the KL-divergence from exploding near the boundary. Similarly, the second condition assumes that $\pi$ is bounded away from the boundary of $\Delta^{I-1}$. The above lemma, 
along with a result of \cite{NguyenPosteriorContractionPopulation2015} (Lemma 7), allows us to control the KL-divergence in terms of the $d_{\bbH+}$ metric. We use the following geometric regularity condition to ensure that for each of the associated component polytopes, each `corner' of the polytope is sufficiently acute.

\begin{definition}\label{def: non-obtuse corner}
    We say that a convex polytope $\cS$ satisfies the \textit{non-obtuse corner property} if at each vertex of $\cS$, there is a supporting hyperplane whose angle formed with any edges adjacent to that vertex is bounded from below by $\delta$, where $\delta>0$ is a fixed number.
\end{definition}

\begin{corollary}\label{corollary: kl upper dH+}
    Given $\omega,\omega'$ as in Lemma \ref{lemma: kl_upper_W} such that all associated component polytopes have the non-obtuse corner property in Definition \ref{def: non-obtuse corner}. Then,
    \begin{equation} \label{eq: upper bound KL dH+}
        \KL(p_{n,\omega}\Vert p_{n,\omega'}) \leq  \left(\frac{nC_{\delta}}{c_0} \vee \frac{1}{c_1}\right) d_{\bbH+}(\omega, \omega')
    \end{equation}
    where $C_\delta>0$ is a constant depending only on $\delta$ in Definition \ref{def: non-obtuse corner}.
\end{corollary}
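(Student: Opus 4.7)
The plan is to combine Lemma \ref{lemma: kl_upper_W} with a Wasserstein-to-Hausdorff inequality (the analogue of Lemma 7 in \cite{NguyenPosteriorContractionPopulation2015}) applied component-by-component, and then take an infimum over permutations to introduce $d_{\bbH+}$. First I would invoke Lemma \ref{lemma: kl_upper_W} to obtain, for every $\sigma \in \bbS(I)$,
\begin{equation*}
\KL(p_{n,\omega}\Vert p_{n,\omega'}) \le \frac{n}{c_0}\sum_{i\in[I]} \pi_i\, W_1(G_i, G_{\sigma(i)}') + \frac{1}{c_1}\sum_{i\in[I]} \pi_i\, |\pi_i - \pi_{\sigma(i)}'|.
\end{equation*}

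Next, I would control each $W_1(G_i, G_{\sigma(i)}')$ by the Hausdorff distance $d_{\cH}(\cS_i, \cS_{\sigma(i)}')$ between the corresponding component topic polytopes. Under the non-obtuse corner property (Definition \ref{def: non-obtuse corner}), the result of \cite{NguyenPosteriorContractionPopulation2015} (their Lemma 7) gives a pointwise Lipschitz-type bound $W_1(G_i, G_{\sigma(i)}') \le C_\delta\, d_{\cH}(\cS_i, \cS_{\sigma(i)}')$ with a constant $C_\delta$ depending only on the lower bound $\delta$ on the supporting-hyperplane angles at the vertices. The non-obtuse corner assumption is precisely what prevents the Dirichlet-pushforward measures from concentrating mass near a sharp vertex that could be moved only a tiny Hausdorff distance while moving significant Wasserstein mass. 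Since this inequality is invoked separately for each component $i$, and each $\cS_i$ and $\cS_{\sigma(i)}'$ inherits the property by hypothesis, the same $C_\delta$ can be used uniformly.

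Plugging these bounds back in, and using $\pi_i \le 1$ on both sums, I obtain
\begin{equation*}
\KL(p_{n,\omega}\Vert p_{n,\omega'}) \le \frac{nC_\delta}{c_0}\sum_{i\in[I]} d_{\cH}(\cS_i, \cS_{\sigma(i)}') + \frac{1}{c_1}\sum_{i\in[I]} |\pi_i - \pi_{\sigma(i)}'| \le \left(\frac{nC_\delta}{c_0}\vee\frac{1}{c_1}\right)\sum_{i\in[I]}\left[d_{\cH}(\cS_i,\cS_{\sigma(i)}') + |\pi_i - \pi_{\sigma(i)}'|\right].
\end{equation*}
Finally, since this holds for every $\sigma \in \bbS(I)$ and the left-hand side is independent of $\sigma$, I minimize the right-hand side over $\sigma$, which by Definition \eqref{def: augmented tree directed Hausdorff metric} yields the factor $d_{\bbH+}(\omega,\omega')$ and completes the bound \eqref{eq: upper bound KL dH+}.

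The main obstacle is the component-wise Wasserstein bound $W_1(G_i, G_{\sigma(i)}') \lesssim d_{\cH}(\cS_i, \cS_{\sigma(i)}')$: this is the step where the geometric regularity (non-obtuse corners) is genuinely used, and where the constant $C_\delta$ enters. Everything else — assembling the two sums, bounding $\pi_i$ by $1$, and taking the infimum over $\sigma$ — is essentially bookkeeping. Since this Wasserstein-Hausdorff inequality has been established in \cite{NguyenPosteriorContractionPopulation2015} for the single-polytope LDA-type pushforward under exactly this corner condition and an $\alpha$-regularity-type assumption on the Dirichlet (guaranteed here by our fixed known $\alpha_0$), applying it path-by-path is legitimate, and the corollary follows.
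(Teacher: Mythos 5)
Your proof is correct and follows essentially the same route as the paper: invoke Lemma \ref{lemma: kl_upper_W}, apply the Wasserstein--Hausdorff bound $W_1(G_i, G_{\sigma(i)}') \le C_\delta\, d_{\cH}(\cS_i, \cS_{\sigma(i)}')$ from Lemma 7 of \cite{NguyenPosteriorContractionPopulation2015} (which needs exactly the non-obtuse corner property), drop the $\pi_i \le 1$ factors, factor out the larger coefficient, and minimize over permutations to obtain $d_{\bbH+}$. The paper's proof is a near-verbatim version of this argument.
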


In the above corollary, we abuse notations slightly, treating $\omega$ as $(\rho,\pi)$ (resp. $(\rho',\pi')$) in the left-side and as $(\cT,\rho,\pi)$ (resp. $(\cT,\rho',\pi')$) in the right-side of Eq. \ref{eq: upper bound KL dH+}. We note that with sufficiently large $n$, the result reads
$$\KL(p_{n,\omega}, p_{n,\omega'}) \leq \tilde{c}n d_{\bbH+}(\omega,\omega'),$$
i.e., the bound worsens as $n$ becomes large. However, this corollary allows us to derive both prior concentration property and control of the complexity of the model class, based on which we establish the following posterior contraction rate for density estimation.

\begin{theorem}\label{thm: density contraction rate}
    Suppose $\Pi$ is a prior distribution on $\Omega(\cT)$ such that for some constants $c_0, c_1, c_2, c_3>0$ the following holds for any $\omega=(\rho,\pi)$ in the prior support:
    \begin{enumerate}
    \item Assumptions \ref{assume: bound topic boundary}, \ref{assume: bound path probability} hold.
        \item All component polytopes $\cS_i$ satisfy the non-obtuse corner property given in Defn. \ref{def: non-obtuse corner}.
        \item For any small $\epsilon>0$, there exists constants $c_2, c_3>0$ such that
        \begin{align*}
            \Pi\left(\norm{\rho(u) - \rho^*(u)}\leq \epsilon, \, \forall u\in \cV\right) &\geq c_2 \epsilon^{K(V-1)} \\
            \Pi\left(\norm{\pi(\varphi) - \pi^*(\varphi)}\leq \epsilon, \, \forall \varphi\in \Phi(\cT)\right) &\geq c_3 \epsilon^{I-1}
        \end{align*}
        and the events in the above display are independent under $\Pi$ (recall $I=|\Phi(\cT)|$ and $K=|\cV|$, size of the set of nodes of $\cT$), where $\omega^*=(\rho^*,\pi^*)$ is a specific parameter in the support of $\Pi$.
    \end{enumerate}
     Then, we have the following:
    \begin{enumerate}[label=(\alph*)]
        \item (fixed $n$ case) For sufficiently large constant $C$, as $m\to \infty$ with $n$ held fixed
        \begin{equation}
            \Pi\left(h(p_{n,\omega^*}, p_{n,\omega}) > C \sqrt{\frac{\log m}{m}} \Bigg| \doc^1,\dots,\doc^m\right) \to 0 \text{ in } P_{n,\omega^*}^\infty-\text{probability}.
        \end{equation}
        \item (increasing $n$ case) For sufficiently large constant $C$, as $m,n\to \infty$ such that $\log n = o(m)$, we have
        \begin{equation}
            \Pi\left(h(p_{n,\omega^*}, p_{n,\omega}) > C \sqrt{\frac{\log (m\vee n)}{m}} \Bigg| \doc^1,\dots,\doc^m\right) \to 0 \text{ in } P_{n,\omega^*}^\infty-\text{probability}.
        \end{equation}
    \end{enumerate}
    where $h$ is the Hellinger metric.
\end{theorem}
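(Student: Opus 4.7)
My plan is to apply the standard Ghosal--Ghosh--van der Vaart (GGV) framework for posterior contraction with i.i.d.\ observations, treating each document $\doc^{\ell}$ as a single draw from $P_{n,\omega}$ and taking $m$ as the effective sample size. This requires verifying three ingredients: (i) sufficient prior mass on a KL-neighborhood of $\omega^*$, (ii) a bound on the Hellinger metric entropy of the density class, and (iii) prior mass decay outside an appropriate sieve. The linchpin for the tree-directed setting is Corollary \ref{corollary: kl upper dH+}, which translates the geometric distance $d_{\bbH+}(\omega,\omega')$ into a bound on $\KL(p_{n,\omega}\|p_{n,\omega'})$, at the cost of a multiplicative factor of $n$ (absorbing the constants $c_0, c_1, C_\delta$ guaranteed by assumptions 1 and 2).

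For the prior mass condition, I would observe that any $d_{\bbH+}$-ball of radius $\delta$ around $\omega^*$ contains a product neighborhood of the form $\{\omega:\|\rho(u)-\rho^*(u)\|\le \delta/K\ \forall u,\ |\pi(\varphi)-\pi^*(\varphi)|\le \delta/I\ \forall \varphi\}$, since the Hausdorff distance between two matched polytopes is at most the maximal Euclidean distance between their vertices and the identity permutation is an admissible choice in the minimum defining $d_{\bbH+}$. By assumption 3 of the theorem (and the stated independence), the prior mass of this product region is at least $c \,\delta^{K(V-1)+I-1}$. Combined with Corollary \ref{corollary: kl upper dH+}, choosing $\delta \asymp \epsilon_m^2/n$ shows that the KL-ball of radius $\epsilon_m^2$ has prior mass of order at least $(\epsilon_m^2/n)^{K(V-1)+I-1}$, whose negative logarithm is $O\!\left((K(V-1)+I-1)\log(n/\epsilon_m^2)\right)$. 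The strengthened GGV condition involving the variance of the log-likelihood ratio follows because assumption \ref{assume: bound topic boundary} forces $|\log p_{n,\omega}(\bx)/p_{n,\omega'}(\bx)|$ to be uniformly bounded in $\bx$, so the variance is dominated by the KL.

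For the entropy step, since $\Omega(\cT)\subset (\Delta^{V-1})^K \times \Delta^{I-1}$ is compact and finite-dimensional, I take the sieve to be the full space $\mathcal{P}_m = \{p_{n,\omega}:\omega\in\Omega(\cT)\}$, which makes condition (iii) trivial. Using $h^2\le \KL \le Cn\, d_{\bbH+}$ on this class, the Hellinger $\epsilon$-covering number is bounded by the $d_{\bbH+}$-covering number at scale $\epsilon^2/(Cn)$, and the latter is at most the covering number of a finite-dimensional box at that scale, giving
\[
  \log N(\epsilon,\{p_{n,\omega}\},h) \;\lesssim\; (K(V-1)+I-1)\log\!\left(\frac{n}{\epsilon^2}\right).
\]
Setting $\epsilon_m^2 = C\log(m\vee n)/m$ balances $m\epsilon_m^2 \asymp \log(m\vee n)$ against both the prior-mass and entropy estimates above (they are of matching order up to constants), and the GGV theorem yields the stated contraction. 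For fixed $n$ the $\log n$ term is absorbed into constants, leaving the rate $\sqrt{\log m/m}$; for increasing $n$ with $\log n = o(m)$ we get $\sqrt{\log(m\vee n)/m}$ and $m\epsilon_m^2\to\infty$ as required.

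The main obstacle is the $n$-dependence in Corollary \ref{corollary: kl upper dH+}: the factor of $n$ degrades the prior concentration (KL balls shrink relative to $d_{\bbH+}$ balls) and simultaneously inflates the entropy estimate, each contributing a $\log n$ term and hence the $\log(m\vee n)$ factor in the rate. A secondary subtlety is that while the metric $d_{\bbH+}$ is invariant under permutations of paths, the prior naturally factors over $\rho$ and $\pi$ on a fixed DRT $\cT$; the identity-permutation lower bound used in the prior-mass step is what makes this compatible with the assumption 3 bounds.
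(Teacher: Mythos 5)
Your overall plan is the right one and matches the paper's: invoke the Ghosal--Ghosh--van der Vaart framework with each document treated as one i.i.d.\ draw, verify prior mass on a Kullback--Leibler neighborhood plus Hellinger entropy, take the sieve to be the whole model (so the third GGV condition is trivial), and propagate the factor of $n$ from Corollary~\ref{corollary: kl upper dH+} into both estimates. The entropy step you sketch coincides with Proposition~\ref{prop: model entropy} in the paper, and the final arithmetic (matching $m\epsilon_{m,n}^2$ against $\log(m\vee n)$) is correct.

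The gap is in the prior mass step, specifically in your one-line treatment of the second moment of the log-likelihood ratio. You claim that because $\abs{\log(p_{n,\omega^*}/p_{n,\omega})}$ is uniformly bounded, ``the variance is dominated by the KL.'' This is not correct: the bound you get from Assumptions \ref{assume: bound topic boundary} and \ref{assume: bound path probability} is $p_{n,\omega^*}/p_{n,\omega}\le 1/(c_1 c_0^{n})$, so $\norm{\log(p_{n,\omega^*}/p_{n,\omega})}_\infty = O(n)$, and boundedness of a random variable does not give $\Var(L)\lesssim \Ex[L]$ with an $n$-free constant (a two-point example with $\Pr(L=M)=\varepsilon$, $\Pr(L=0)=1-\varepsilon$ has $\Var(L)/\Ex[L]\approx M$). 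The right tool, used by the paper, is the Wong--Shen bound: if $\int p_{n,\omega^*}^2/p_{n,\omega}\le M_n$ and $h(p_{n,\omega^*},p_{n,\omega})\le\epsilon$, then $\KL\lesssim\epsilon^2\log(M_n/\epsilon)$ and $K_2\lesssim\epsilon^2[\log(M_n/\epsilon)]^2$, where here $M_n\le 1/(c_1 c_0^{n})$ so $\log M_n\asymp n$. With your choice of $d_{\bbH+}$-ball radius $\delta\asymp\epsilon_m^2/n$, one only gets $h^2\lesssim\epsilon_m^2$ and hence $K_2\lesssim\epsilon_m^2\,(n+\log(1/\epsilon_m))^2$, which is \emph{not} $O(\epsilon_m^2)$ once $n$ grows; the strengthened prior-mass condition is therefore not verified at that radius.

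The fix, as in Proposition~\ref{prop: KL property prior}, is to shrink the $d_{\bbH+}$-neighborhood: take $\delta\asymp\epsilon_{m,n}^2/n^3$ when $n\gtrsim\log(1/\epsilon_{m,n})$, and $\delta\asymp\epsilon_{m,n}^2/(n\log^2(1/\epsilon_{m,n}))$ otherwise, so that $h^2\lesssim\epsilon_{m,n}^2/n^2$ and the squared log-factor from Wong--Shen is absorbed, giving $K_2\lesssim\epsilon_{m,n}^2$. The negative log of the prior mass of this smaller neighborhood is still of order $(K(V-1)+I-1)(\log n+\log(1/\epsilon_{m,n}))\lesssim m\epsilon_{m,n}^2$ because $\log n\lesssim\log(m\vee n)$ and $\log(1/\epsilon_{m,n})\lesssim\log m$, so your final rate $\epsilon_{m,n}=\sqrt{\log(m\vee n)/m}$ is unchanged. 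In short: the conclusion and the skeleton of the argument are fine, but the $K_2$ step needs the Wong--Shen lemma and a correspondingly smaller ball, not the informal ``variance is dominated by KL'' shortcut.
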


The proof of the above result is provided in the Appendix \ref{app: proof of density rate}. We show that the conditions required to employ Theorem 8.9 from \cite{GhosalVaartFundamentalsNonparametricBayesian2017} are indeed satisfied for our model — prior mass property in Proposition \ref{prop: KL property prior} and model entropy in Proposition \ref{prop: model entropy} in the Appendix. 

\begin{remark}
We make the following remarks regarding the preceding theorem.
\begin{enumerate}
    \item The model enjoys an almost parametric density contraction rate, up to logarithmic terms. Part (a) of the theorem shows that for density estimation, we do not require $n\to \infty$. 
    \item No additional assumptions are required for the true parameter $\omega^*$ as long as it is in the support of the prior $\Pi$.
    \item Restrictions on the prior $\Pi$: the first two are regularity conditions required to establish Corollary \ref{corollary: kl upper dH+}, which in turn is required to establish prior concentration as well as controlling entropy of the model class. The last assumption only states that the prior places sufficient mass near the true parameter. The first three regularity conditions are required for technical purposes to ensure that the associated parameters are bounded away from the boundary of the respective probability simplexes — this ensures that the multinomial density stays bounded. In practice, we use Dirichlet distribution as a prior for each of the topics and also for the path-probability vector.
    \item The constant $C$ depends only on size of the DRT $\cT$ (in particular $I,K$) and also the vocabulary size $V$. It does not depend on the true parameter $\omega^*$ in this case.
    \item The theorem shows that large $n$ affects the contraction rate of the density only slightly (in logarithmic terms). This arises due to the upper bound on the KL divergence in terms of the $W_1$ metric on the latent measures $G_i$. As $n$ increases, the conditional distributions of the document mean given that the document is generated from a particular path become singular if the underlying component polytopes have different affine spaces. This also results in worsening of the upper bound on the entropy of the model space as $n$ gets large, although the upper bound on the entropy of the parameter space does not depend on $n$.
\end{enumerate}
\end{remark}

\subsection{Latent structure estimation}\label{sec:parameter estimation}
In this section, we turn to the convergence behavior of the parameters arising in our latent structured model. This kind of question has been studied in recent works for mixture models \citep{nguyen2013convergence,guha2021posterior} and hierarchical models \citep{NguyenPosteriorContractionPopulation2015,NguyenBorrowingStrenghHierarchical2016}. A general approach considered in these works is to establish so-called inverse bounds, which are a type of lower bound for the total variation distance between distributions of the observed data in terms of a distance between the corresponding latent structures using an appropriate metric. Such an inverse bound may be of the form $d_{\TV}(P_{G}, P_{G'}) \gtrsim d(G, G')$, where $G, G'$
capture all information about the latent structure in the model and $d$ is some appropriate metric on the space of the latent $G$. For finite and infinite mixture models, $G$ is the discrete mixing measure, e.g., $G=\sum_k \pi_k \delta_{\theta_k}$ and the metric $d$ is often chosen as $W_1$ or $W_2$. For the LDA, $G$ is taken as the underlying topic polytope (called \textit{population polytope}) and the metric as the Hausdorff distance \citep{NguyenPosteriorContractionPopulation2015}. It is worth noting that such an inverse bound is the reverse direction of what we used in the previous sections. For instance, Corollary \ref{corollary: kl upper dH+} gives an upper bound for the KL divergence and by Pinsker's inequality, $d_{\TV}^2 \leq 2\KL$, which yields an upper bound for $d_{\TV}$ in terms of the $d_{\bbH+}$ metric on the parameters which represent the latent structure. However, establishing lower bounds for the total variation distance turns out to be more challenging. Given such an inverse bound, say $d_{\TV}(P_{G}, P_{G'}) \gtrsim d(G, G')$, it is easy to transfer density contraction rates to parameter contraction rates under the metric $d$.

For the tree-directed topic models, the latent structure is represented by the parameter $\omega\in\Omega(\cT)$; the metric $d_{\bbH+}$ on this space has been utilized to identify $\omega$ and to establish the data density contraction rates. It is difficult to obtain an inverse bound directly with this metric. For a given $\cT$, the topic hierarchy is captured entirely by the topic map $\rho$. Thus, in this section, the following pseudo-metric for $\rho$, to be called \textbf{union Hausdorff metric}, will be employed.

\begin{definition}
    For a fixed DRT $\cT$ and $\rho,\rho'\in \mathfrak{R}(\cT)$, define
    \begin{align*}
        d_{\cU\cH}(\rho,\rho'|\cT) &= d_{\cH}\left(\cup_{i\in [I]} \cS_i, \cup_{i\in [I]} \cS_i'\right)
    \end{align*}
    where $I=|\Phi(\cT)|$ is the number of paths of $\cT$ and $\cS_i$ is the component topic polytopes associated with path $i$ under $\rho$, given a particular enumeration of the paths of $\cT$ (and similarly for $\cS_i'$).
\end{definition}

The notation $d_{\cU\cH}(\rho,\rho')$, which hides the dependence on the underlying DRT $\cT$, will also be used  when this is clear from context. The union Hausdorff metric inherits metric properties from the Hausdorff metric directly. Since the metric is the Hausdorff distance on union of the polytopes, in general it may not be used to disentangle the individual polytope components, unless some suitable conditions hold. 
Lemma \ref{lemma: union Hausdorff identifies polytopes} in the Appendix gives such sufficient conditions to identify the complete latent structure from the union of the supports, by exploiting the special geometric structure of the model. Note that $\cup_i \cS_i$ is not a convex set in general for our model, which makes the "disentangling" challenging. However, if the measure $G$ places enough mass everywhere in $\cS$, then this issue can be overcome. Fortunately, because $G=\sum_i \pi_i G_i$, our next result demonstrates that each $G_i$ indeed places sufficient probability mass everywhere in its support, provided that the component $G_i$'s are push-forwards of Dirichlet distributions. This guarantees that if the $\pi_i'$s are also bounded from below, $G$ places sufficient mass everywhere in its support. 


\begin{lemma}\label{lemma:lb_mass_small_ball}
    For $G=\text{Dir}(K;\alpha)_{\#}L$, where $L(\beta)=\sum_{k\in[K]}\beta_k\theta_k$, then  for any $\eta\in \cS$, where $\cS=\conv(\{\theta_1,\dots,\theta_K\})$, and for all $0<\epsilon<1/K$,
    $$G(B(\eta,\epsilon)\cap \cS)\geq C(K,p,\alpha)\epsilon^{Q(K,p,\alpha)}$$
    where $p=\dim \cS$, $C(K,p,\alpha)>0$ is a constant free of $\epsilon$ and
    \begin{align*}
        Q(K,p,\alpha) = \begin{cases}
            p + \alpha(K-p-1) &\alpha \leq  1 \\
            \alpha K -1 &\alpha>1.
        \end{cases}
    \end{align*}
\end{lemma}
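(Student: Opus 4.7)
The plan is to reduce to a Dirichlet probability computation via the pushforward $G=L_\#\text{Dir}(K;\alpha)$, and then construct explicit subsets of $\Delta^{K-1}$ whose image lies in $B(\eta,\epsilon)\cap\cS$ with tractable Dirichlet mass. Writing $G(B(\eta,\epsilon)\cap\cS)=\text{Dir}(K;\alpha)(L^{-1}(B(\eta,\epsilon)\cap\cS))$, the key tool is that $L$ is $1$-Lipschitz from $(\Delta^{K-1},\|\cdot\|_1)$ to $(\Delta^{V-1},\|\cdot\|_2)$ (since $\|\theta_k\|\leq 1$), so the preimage of $B(\eta,\epsilon)$ contains any $\ell_1$-ball of radius $\epsilon$ about a point $\beta^*\in L^{-1}(\eta)$. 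Applying Carath\'eodory to the $p$-dimensional polytope $\cS$, I pick $\beta^*\in\Delta^{K-1}$ with $L(\beta^*)=\eta$ and $s:=|\supp\beta^*|\leq p+1$, and relabel so $\supp\beta^*\subseteq[s]$. The construction of the region $R_\epsilon$ then depends on the regime of $\alpha$.

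For $\alpha\leq 1$, I define
\[
R_\epsilon=\bigl\{\beta\in\Delta^{K-1}\,:\,|\beta_k-\beta^*_k|\leq c\epsilon\text{ for }k\in\supp\beta^*,\,0\leq\beta_k\leq c\epsilon\text{ for }k\notin\supp\beta^*\bigr\},
\]
with $c>0$ chosen so that $L(R_\epsilon)\subseteq B(\eta,\epsilon)$ by the Lipschitz bound. Factorizing $\int_{R_\epsilon}\prod_k\beta_k^{\alpha-1}\,d\beta$: for $k\in\supp\beta^*$, $\beta_k\asymp\beta^*_k$ keeps $\beta_k^{\alpha-1}$ bounded and an $\epsilon$-interval contributes a factor of order $\epsilon$; for $k\notin\supp\beta^*$, $\int_0^{c\epsilon}\beta_k^{\alpha-1}\,d\beta_k=(c\epsilon)^\alpha/\alpha$, an $\epsilon^\alpha$ factor. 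After accounting for the simplex constraint $\sum_k\beta_k=1$ (which removes one $\epsilon$ factor), this yields $\text{Dir}(K;\alpha)(R_\epsilon)\gtrsim\epsilon^{s-1+\alpha(K-s)}$. Since $1-\alpha\geq 0$ this exponent is maximized at $s=p+1$, giving the uniform lower bound $\epsilon^{p+\alpha(K-p-1)}$.

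For $\alpha>1$ the support-based construction becomes suboptimal because the density vanishes on any zero coordinate. Instead I perturb $\beta^*$ inward via $\hat\beta:=(1-\epsilon)\beta^*+\epsilon\cdot\mathbf{1}/K$, so $\hat\beta_k\geq\epsilon/K$ uniformly in $k$ and $\|L(\hat\beta)-\eta\|=O(\epsilon)$. Taking $R_\epsilon$ to be an $\ell_1$-ball of radius $c\epsilon$ about $\hat\beta$ inside $\Delta^{K-1}$ with $c$ chosen small enough that $L(R_\epsilon)\subseteq B(\eta,\epsilon)$, every coordinate in $R_\epsilon$ satisfies $\beta_k\gtrsim\epsilon/K$, hence $\prod_k\beta_k^{\alpha-1}\gtrsim\epsilon^{K(\alpha-1)}$. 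Combined with the $(K-1)$-dimensional Lebesgue mass $\sim\epsilon^{K-1}$ of $R_\epsilon$ in the simplex, this gives $\text{Dir}(K;\alpha)(R_\epsilon)\gtrsim\epsilon^{K(\alpha-1)+K-1}=\epsilon^{\alpha K-1}$.

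The main obstacle is the qualitative difference in the Dirichlet density's boundary behavior: $\beta_k^{\alpha-1}$ blows up for $\alpha<1$ and vanishes for $\alpha>1$, which forces the case split and two different constructions (support-localized versus interior-perturbed). Secondary technical points are verifying that the regions stay inside $\Delta^{K-1}$, which requires $\epsilon<1/K$ (exactly the hypothesis of the lemma), tuning the Lipschitz constant $c$ uniformly so $L(R_\epsilon)\subseteq B(\eta,\epsilon)$, and arranging the implicit constants to depend only on $K,p,\alpha$ and not on the particular $\eta\in\cS$.
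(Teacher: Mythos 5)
Your proof is correct and follows essentially the same route as the paper: push forward to a Dirichlet mass computation, use that $L$ is $1$-Lipschitz from $(\Delta^{K-1},\|\cdot\|_1)$ to $\ell_2$, take a Carath\'eodory representation $\beta^*$ of $\eta$ with support of size $\leq p+1$, and lower-bound the Dirichlet mass of a small region near $\beta^*$ by factorizing the integral into support and non-support coordinates. The one minor variation is how you handle the vanishing density for $\alpha>1$: you perturb $\beta^*$ to the interior point $\hat\beta=(1-\epsilon)\beta^*+\epsilon\ones/K$ and take an $\ell_1$-ball there, while the paper keeps the same box around $\beta^*$ and simply truncates each coordinate at $\delta=\epsilon/3K$; the two devices accomplish the same thing and yield the identical exponent $\alpha K-1$.
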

The above lemma guarantees that each component measure arising in the model gives sufficient mass to each $\epsilon-$ball inside the component, each such ball is of the same dimension as the corresponding component polytope. In the special case, $\alpha\leq 1$ and the topics are affinely independent, i.e., $p=K-1$,  $Q(K,p,\alpha)=K-1$. In all cases, the bound worsens as $\alpha>1$ gets larger, because the Dirichlet distribution puts lower mass near the boundary of its support. If the intrinsic dimension is small compared to $K$ and $\alpha<1$, then we find that $Q(K,p,\alpha)=\alpha(K-1)+(1-\alpha)p$, a convex combination of $K-1$ (similar to the affinely independent case) and $p$ (the intrinsic dimension). If $\alpha>1$, then we always have the worse exponent $\alpha K-1$ regardless of $p$. We note that $\max_{1\leq p \leq K-1} Q(K,p,\alpha) = (1\vee \alpha)K -1$ denotes the worst possible value of $Q(K,p,\alpha)$ for fixed $K$ and $\alpha$. Based on this result, we can prove the following inverse bound. 

\begin{theorem}\label{thm: inverse bound}
There is a constant $\epsilon_0>0$ depending on $J$ such that
    for any $\omega=(\rho,\pi)\in\Omega(\cT), \omega'=(\rho',\pi')\in\Omega(\cT)$ satisfying Assumption \ref{assume: bound path probability}, whenever $d_{\cU\cH}(\rho,\rho')<\epsilon\leq\epsilon_0$ there  holds
    \begin{equation}\label{eq: inverse bound}
        C_1 d_{\cU\cH}(\rho,\rho')^{(1\vee\alpha_0)J-1} \leq d_{\TV}(p_{n,\omega}, p_{n,\omega'}) + 6V \exp\left[-\frac{n}{8V} d_{\cU\cH}(\rho,\rho')^2\right],
    \end{equation}
    where $J$ is the depth of the DRT $\cT$,  $C_1=C_1(c_1,J,\alpha_0)>0$ is a constant, and $c_1$ is the constant in Assumption \ref{assume: bound path probability}.
\end{theorem}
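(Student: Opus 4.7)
The plan is to construct a test event that separates $P_{n,\omega}$ from $P_{n,\omega'}$ by exploiting the fact that a Hausdorff separation $d:=d_{\cU\cH}(\rho,\rho')$ forces a small ball around a witness point to carry substantial mass under one document-mean measure but essentially none under the other. The two terms on the right-hand side of Eq.~\eqref{eq: inverse bound} then arise naturally: the polynomial $d^{(1\vee\alpha_0)J-1}$ comes from the mass lower bound in Lemma~\ref{lemma:lb_mass_small_ball}, and the exponential $\exp(-nd^2/(8V))$ comes from Hoeffding-type concentration of the empirical word frequencies around their conditional mean $\eta$.

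\textbf{Step 1 (Witness point).} By symmetry of $d_{\cU\cH}$ and compactness of each $\cS_i$, WLOG there exists $\eta_0\in\cup_i\cS_i$ with $\dist(\eta_0,\cup_j\cS_j')=d$; hence the ball $B(\eta_0,d)$ is disjoint from $\cup_j\cS_j'$. Pick any index $i^*$ with $\eta_0\in\cS_{i^*}$.

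\textbf{Step 2 (Local mass).} The component measure $G_{i^*}$ is the push-forward of $\text{Dir}_J(\alpha_0)$ under the linear map $\beta\mapsto\Theta_{i^*}^\top\beta$. Taking $\epsilon_0$ small enough that $d/4<1/J$, Lemma~\ref{lemma:lb_mass_small_ball} yields $G_{i^*}(B(\eta_0,d/4)\cap\cS_{i^*})\geq C'(d/4)^{Q(J,p,\alpha_0)}$ with $p=\dim\cS_{i^*}$. Since $Q(J,p,\alpha_0)\leq(1\vee\alpha_0)J-1$ uniformly in $p$, and $\pi_{i^*}\geq c_1$ by Assumption~\ref{assume: bound path probability}, we obtain
$$G(B(\eta_0,d/4))\ \geq\ c_1 C'(d/4)^{(1\vee\alpha_0)J-1}.$$

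\textbf{Step 3 (Test event and concentration transfer).} Define $A:=\{\bX\in[V]^n:\hat\eta(\bX)\in B(\eta_0,d/2)\}$, where $\hat\eta_v=m_v/n$. Conditional on the latent $\eta$, each $\hat\eta_v$ is the sample mean of $n$ i.i.d.\ $\text{Bern}(\eta_v)$ variables; Hoeffding with a union bound over $v\in[V]$ and $\|\cdot\|_2\leq\sqrt{V}\|\cdot\|_\infty$ give $\Pr(\|\hat\eta-\eta\|_2>t\mid\eta)\leq 2V\exp(-2nt^2/V)$. Splitting on $\{\eta\in B(\eta_0,d/4)\}$ and using the triangle inequality $\|\hat\eta-\eta_0\|\leq\|\hat\eta-\eta\|+\|\eta-\eta_0\|$,
$$P_{n,\omega}(A)\ \geq\ G(B(\eta_0,d/4))-2V e^{-nd^2/(8V)}\ \geq\ c_1 C'(d/4)^{(1\vee\alpha_0)J-1}-2V e^{-nd^2/(8V)}.$$
Conversely, every $\eta'\in\cup_j\cS_j'$ has $\|\eta'-\eta_0\|\geq d$, so on $A$ we have $\|\hat\eta-\eta'\|\geq d/2$; hence
$$P_{n,\omega'}(A)\ \leq\ 2V\exp(-2n(d/2)^2/V)\ =\ 2V e^{-nd^2/(2V)}\ \leq\ 2V e^{-nd^2/(8V)}.$$

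\textbf{Step 4 (Assemble).} Using $d_{\TV}(p_{n,\omega},p_{n,\omega'})\geq P_{n,\omega}(A)-P_{n,\omega'}(A)$ and collecting all constants into $C_1=C_1(c_1,J,\alpha_0)$ yields Eq.~\eqref{eq: inverse bound} (in fact with $4V$ in place of the stated, slightly looser, $6V$).

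\textbf{Main obstacle.} The technically delicate part is the uniform control of the mass-lower-bound exponent across all admissible intrinsic dimensions $p\in\{1,\dots,J-1\}$ of the component polytopes; this is precisely what forces the pessimistic exponent $(1\vee\alpha_0)J-1$ and keeps $C_1$ dependent only on $(c_1,J,\alpha_0)$ rather than on the particular topic map. A secondary but routine care is the choice of the two radii $d/4$ and $d/2$ so that the Hoeffding tails on both sides absorb cleanly into a single $\exp(-nd^2/(8V))$ term; the smallness condition $d<\epsilon_0$ enters only to keep $d/4$ inside the validity range $\epsilon<1/J$ of Lemma~\ref{lemma:lb_mass_small_ball}.
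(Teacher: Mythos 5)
Your proposal is correct and takes essentially the same approach as the paper: both produce a witness point $\eta_0$ realizing the Hausdorff separation, lower-bound the mass of a small ball around it via Lemma~\ref{lemma:lb_mass_small_ball}, and transfer that separation to the observed document means via Hoeffding. Your version is in fact slightly cleaner — by testing directly on $\{\hat\eta\in B(\eta_0,d/2)\}$ rather than on an $\epsilon$-fattening $A^*_\epsilon$ as the paper does, you obtain the marginally better constant $4V$ in place of the paper's $6V$, and you make the role of $\epsilon_0$ (ensuring $d/4<1/J$ so the lemma applies) explicit.
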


\textit{Proof sketch}: The proof uses the probabilistic model $\eta\sim G=\sum_i \pi_i G_i$ (where $G_i$s are the component measures, each supported on a component polytope $\cS_i$) and $\doc|\eta$ are i.i.d. from a multinomial distribution with $\eta$ as the parameter. By a standard concentration inequality, for large $n$, the distribution of the document means $\hat{\eta}_i\in\Delta^{V-1}$ concentrate in small balls around that of the corresponding $\eta$, under the joint distribution of $(\eta, \doc)$. Finally, we argue that if the union Hausdorff metric is $\epsilon$, then there exists a set $A^*\subset\Delta^{V-1}$, which is in either $\cS=\cup_i \cS_i$ and well-separated from $\cS'=\cup_i \cS_i'$ or in $\cS'$ and well-separated from $\cS$. In either case, this set enables one to obtain a lower bound on the total variation distance between the document distributions, since $G, G'$ place different probability on a sufficiently small fattening of $A^*$. The complete proof of this theorem is given in the Appendix.

\begin{remark}
    We make a few remarks about the above theorem:
\begin{enumerate}
    \item Special case: If each of the component polytopes have the same dimension $p$, and $\alpha\leq 1$, then the inverse bound improves to
    $$C_1 d_{\cU\cH}(\rho,\rho')^{\alpha J + (1-\alpha) p -1} \leq d_{\TV}(p_{n,\omega}, p_{n,\omega'}) + 6V \exp\left[-\frac{n}{8V} d_{\cU\cH}(\rho,\rho')^2\right].$$
    In the case the topics in each component are affinely independent (which is the case when $J<V$ and the topics are in general positions), then the exponent of the union Hausdorff metric in the left-hand side becomes simply $(J-1)$.
    \item The only assumption required for the inverse bound is that the path probabilities for each model are bounded from below by a constant ($c_1$ in the lemma).
    \item Although the above theorem is presented using the Dirichlet distribution $p_\beta$ on the probability simplex, this is not strictly required. In fact, the Dirichlet distribution may be replaced by any distribution so long as the conclusion of Lemma \ref{lemma:lb_mass_small_ball} holds for the pushforward probability measure $(p_{\beta})_{\#}L$. Specifically, the theorem continues to hold for any probability distribution $p_{\beta}$ arising from an $\alpha$-regular family (as defined in \cite{NguyenPosteriorContractionPopulation2015}). 
    
    \item Due to the presence of the additive term (the second quantity) in the right hand side of Eq. \eqref{eq: inverse bound}, we cannot directly obtain $d_{\cU\cH}\lesssim d_{\TV}$ in this model and require $n\to\infty$, since that term decays exponentially with $n$. However, if $d_{\cU\cH}(\rho,\rho')>\epsilon$, then taking $n\geq \frac{8V}{\epsilon^2}\log \left[\frac{12V}{C_1\epsilon^{(1\vee\alpha)J-1}}\right]$ ensures that $d_{\TV}(p_{n,\omega}, p_{n,\omega'})\geq \frac{C_1}{2}\epsilon^{(1\vee\alpha)J-1}$. As a consequence, we have showed that as long as the Hausdorff distance between $\cup_i \cS_i$ and $\cup_i \cS_i'$ are bounded away from 0, the total variation is bounded away in terms of this for sufficiently high $n$, irrespective of the path probabilities subject to Assumption \ref{assume: bound path probability}.
\end{enumerate}
\end{remark}

The inverse bound allows us to transfer the density estimation rate we derived in the last section to derive estimation rates for the parameter $\rho$ in terms of the $d_{\cU\cH}$ metric.

\begin{theorem}\label{thm :parameter contraction rate}
    Suppose $\Pi$ is a prior on $\Omega(\cT)$ satisfying the assumptions in Theorem \ref{thm: density contraction rate} and $\omega_0=(\rho_0,\pi_0)$ is any point in the support of $\Pi$. Then, given data $\doc^1,\dots,\doc^m\overset{iid}{\sim} P_{n,\omega_0}$, the posterior distribution of $\rho$ contracts to $\delta_{\rho_0}$ as 
    \begin{equation}
        \Pi\left(d_{\cU\cH}(\rho, \rho_0) > C\epsilon_{m,n}\Bigg| \doc^1,\dots,\doc^m\right)\to 0\text{ in } P_{n,\omega_0}^\infty \text{ probability as } m,n\to\infty 
    \end{equation}
    such that $\log n = o(m)$, for some suitably large constant $C>0$, for the choice
    \begin{equation}
        \epsilon_{m,n} = \left[\log (m\vee n) \left(\frac{1}{m} + \frac{1}{n}\right)\right]^{\frac{1}{2[(1\vee\alpha)J - 1]}}
    \end{equation}
    where $J$ is the depth of the tree $\cT$.
\end{theorem}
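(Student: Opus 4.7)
The plan is to deduce the parameter contraction rate from the density contraction rate of Theorem \ref{thm: density contraction rate}(b), using the inverse bound of Theorem \ref{thm: inverse bound} as the bridge. The standard inequality $d_{\TV} \leq \sqrt{2}\,h$ converts the Hellinger contraction into a total-variation contraction, and then Theorem \ref{thm: inverse bound} turns a $d_{\TV}$-ball into a $d_{\cU\cH}$-ball with the exponent $(1\vee\alpha)J-1$. The particular form of $\epsilon_{m,n}$ comes from balancing two requirements on the inverse bound: the left-hand side $C_1\epsilon_{m,n}^{(1\vee\alpha)J-1}$ must dominate the density rate $\sqrt{\log(m\vee n)/m}$, and the exponential term $6V\exp[-n\epsilon_{m,n}^2/(8V)]$ must be negligible --- the latter being what forces the $1/n$ contribution inside the bracket defining $\epsilon_{m,n}$.

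First I would invoke Theorem \ref{thm: density contraction rate}(b) to obtain, for some $C'>0$,
\begin{equation*}
\Pi\!\left(d_{\TV}(p_{n,\omega}, p_{n,\omega_0}) > C'\sqrt{\log(m\vee n)/m}\,\Big|\,\doc^1,\dots,\doc^m\right)\to 0
\end{equation*}
in $P_{n,\omega_0}^\infty$-probability, using $d_{\TV}\leq\sqrt{2}\,h$. Next, on the complementary event, suppose additionally $d_{\cU\cH}(\rho,\rho_0) > C\epsilon_{m,n}$. For $m,n$ large enough that $C\epsilon_{m,n} < \epsilon_0$, the inverse bound gives
\begin{equation*}
C_1\bigl(C\epsilon_{m,n}\bigr)^{(1\vee\alpha)J-1} \leq d_{\TV}(p_{n,\omega},p_{n,\omega_0}) + 6V\exp\!\left[-\frac{n(C\epsilon_{m,n})^2}{8V}\right].
\end{equation*}
By the defining identity $\epsilon_{m,n}^{(1\vee\alpha)J-1}=\sqrt{\log(m\vee n)(1/m+1/n)}$, the left-hand side dominates $C_1 C^{(1\vee\alpha)J-1}\sqrt{\log(m\vee n)/m}$. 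For the exponential, one computes $n\epsilon_{m,n}^2 \geq n^{\,1-1/[(1\vee\alpha)J-1]}\bigl(\log(m\vee n)\bigr)^{1/[(1\vee\alpha)J-1]}$, which diverges under the regime $\log n = o(m)$; hence the exponential term is $o(\sqrt{\log(m\vee n)/m})$. Choosing $C$ so that $C_1 C^{(1\vee\alpha)J-1} > 2C'$ then contradicts the upper bound on $d_{\TV}$, so the posterior probability of $\{d_{\cU\cH}(\rho,\rho_0) > C\epsilon_{m,n}\}$ tends to zero.

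The main subtlety is that Theorem \ref{thm: inverse bound} is stated only for $d_{\cU\cH}<\epsilon_0$, so the regime $d_{\cU\cH}(\rho,\rho_0)\geq\epsilon_0$ must be ruled out separately. I would handle this by invoking the separating-set construction underpinning the proof of Theorem \ref{thm: inverse bound}: applied at the threshold value $d_{\cU\cH}=\epsilon_0$, combined with compactness of the topic-map space $\mathfrak{R}(\cT)\subset(\Delta^{V-1})^K$ and the identifiability conclusion in Theorem \ref{thm:identifiability2}, it yields a uniform positive lower bound $d_{\TV}(p_{n,\omega},p_{n,\omega_0})\geq\delta_0>0$ over all $\omega$ in the prior support with $d_{\cU\cH}(\rho,\rho_0)\geq\epsilon_0$ and sufficiently large $n$. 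Since $\delta_0 \gg \sqrt{\log(m\vee n)/m}$ eventually, this case is absorbed into the density-contraction event, completing the argument. Tracking constants across the uniformity assumptions in Theorem \ref{thm: density contraction rate} is then a routine bookkeeping exercise; the hard part is isolated in the $d_{\cU\cH}\geq\epsilon_0$ regime and in verifying that $n\epsilon_{m,n}^2\to\infty$ under the stated coupling of $m$ and $n$.
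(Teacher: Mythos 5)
Your overall plan --- transfer the Hellinger density contraction of Theorem \ref{thm: density contraction rate}(b) into a $d_{\TV}$ contraction, split into the near regime $d_{\cU\cH}<\epsilon_0$ (where Theorem \ref{thm: inverse bound} applies) and the far regime $d_{\cU\cH}\geq\epsilon_0$, and then balance the two terms of the inverse bound against the density rate to pin down $\epsilon_{m,n}$ --- is exactly the paper's strategy, and your computation showing $n\epsilon_{m,n}^2\to\infty$ under $\log n = o(m)$ is correct.

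The gap is in your treatment of the far regime. You propose to combine the separating-set construction with compactness of $\mathfrak{R}(\cT)$ and with Theorem \ref{thm:identifiability2}. But Theorem \ref{thm:identifiability2} requires $\omega_0$ to satisfy Assumptions \ref{assume:diff_affine_space} and \ref{assume:pos}, whereas the prior conditions inherited from Theorem \ref{thm: density contraction rate} guarantee only Assumptions \ref{assume: bound topic boundary} and \ref{assume: bound path probability}. Nothing forces the component polytopes of $\omega_0$ to have distinct affine hulls, so Assumption \ref{assume:diff_affine_space} need not hold for the true parameter, and the identifiability theorem cannot be invoked. Separately, even if it did apply, compactness plus a \emph{pointwise} $\liminf_n d_{\TV}(P_{n,\omega},P_{n,\omega_0})>0$ for each fixed $\omega$ does not by itself give a \emph{uniform} positive $\delta_0$ over the far-away set: the map $\omega\mapsto\liminf_n d_{\TV}(P_{n,\omega},P_{n,\omega_0})$ would have to be lower semicontinuous, which you would need to establish rather than assume.

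The paper sidesteps both issues by never leaving the inverse-bound argument. In the last step of the proof of Theorem \ref{thm: inverse bound}, replace the separating ball of radius $\epsilon_1 = d_{\cU\cH}/4$ by the ball of radius $\epsilon_0$ whenever $d_{\cU\cH}\geq\epsilon_0$; Lemma \ref{lemma:lb_mass_small_ball} then yields $G(B(x,\epsilon_0)\cap\cS_i)\geq C\epsilon_0^{(1\vee\alpha)J-1}$ --- a \emph{uniform} constant independent of $d_{\cU\cH}$ --- and hence $\liminf_n d_{\TV}(p_{n,\omega},p_{n,\omega_0}) \geq c\,\epsilon_0^{(1\vee\alpha)J-1}$ uniformly over all $\omega$ in the prior support with $d_{\cU\cH}(\rho,\rho_0)\geq\epsilon_0$. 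This needs only Assumption \ref{assume: bound path probability} (which the prior does impose) and no appeal to identifiability or a compactness argument.
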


\textit{Proof sketch:} The inverse bound allows us to control $\liminf_{n\to\infty} h^2(p_{n,\omega_0}, p_{n,\omega})$ in terms of $\epsilon>0$ for $\omega$ in the support of $\Pi$ such that $d_{\cU\cH}(\rho,\rho_0)\geq \epsilon$. In particular, as long as $\epsilon\gtrsim \sqrt{\log n /n}$, for such $\omega$, we have $\liminf_{n\to\infty} h^2(p_{n,\omega_0}, p_{n,\omega}) \geq C\epsilon^{2[(1\vee\alpha)J-1]}$ for some constant $C>0$. Thus, the posterior probability on $\{d_{\cU\cH}(\rho,\rho_0)\geq \epsilon_{m,n}\}$ can be upper bounded by the posterior mass on $\{h(p_{n,\omega_0}, p_{n,\omega})\geq C'\epsilon_{m,n}^{(1\vee\alpha)J-1}\}$ as $n\to\infty$, which we further know goes to 0 as $m\to \infty$ because of the density contraction result.

\begin{remark}
    We make a few remarks regarding the preceding theorem.
\begin{enumerate}
    \item $\epsilon_{m,n}$ is merely an upper bound for the contraction rate in terms of the union Hausdorff metric. We require both $m,n\to \infty$ for this result. This is in contrast to Theorem \ref{thm: density contraction rate}, which only requires $m\to\infty$ and $n$ could be fixed. The situation arises due to the need to dominate the second term in Eq. \eqref{eq: inverse bound} in Theorem \ref{thm: inverse bound}. This can be taken as a consequence of the proof technique, where we study parameter estimation through density estimation — for fixed $n$, as $m\to\infty$, the density can be estimated at the usual parametric rate (with respect to the number of iid documents); however, the component polytopes can be estimated from this density at a rate given in the inverse bound as $n\to\infty$.
    \item In the special case $\alpha\leq 1$ and each component consists of affinely independent topics, the rate boils down to $\epsilon_{m,n}=[\log m (1/m + 1/n)]^{1/2(J-1)}$ in the typical case when $m>n$.
    \item The only restrictions required are those on the prior, same as the ones for the density estimation result. They arise in our attempt to upper bound $K_2$, defined as $K_2(p, q) = \int (\log(p/q) - \KL(p\Vert q))^2 dP$, which plays a crucial role in the Bayesian asymptotic analysis for density estimation. 
    \item The result does not explicitly require the use of the Dirichlet distribution, and is applicable to any distribution $p_\beta$ on the probability simplex subject to a certain restriction. Specifically, under the well-specified setting as considered here, the restriction is related to placing sufficient mass near the boundary of the simplex, as discussed in the third remark after Theorem \ref{thm: inverse bound}.
\end{enumerate}
\end{remark}

\section{Inference algorithm}\label{sec:inference}

In this section, we describe an inference algorithm for estimating the topics in this model. We consider a collapsed Gibbs sampler, which is derived based on the collapsed Gibbs sampler for the LDA \citep{griffiths2004finding}. We utilize the equivalent model with the additional latent variables $C\in [I]^m$ and $L \in [J]^{m\times n}$, where $C_i$ is the label of the path of the underlying DRT associated to document $\doc^i, i\in[m]$ and $L_{i,j}$ is the depth associated to word $X_{i,j}$ in document $\doc^i$ for $i\in[m], j\in[n]$. Given a DRT $\cT$ of size $(I,J,K)$,  topics $\theta_1,\dots,\theta_K$ (where $\theta_k = \rho(v_k)$ for topic map $\rho$), path-probability vector $\pi\in\Delta^{I-1}$ and document-specific topic mixture $\beta_i\in\Delta^{J-1}$ for $i\in[m]$, the model specification is given as
\begin{align*}
    C_i | \pi &\overset{i.i.d.}{\sim} \text{Cat}(\pi) \\
    L_{i,j} | \beta_i &\overset{i.i.d.}{\sim} \text{Cat}(\beta_i) \\
    X_{i,j} | C_i, L_{i,j} &\sim \text{Cat}(\theta_{z(c_i, L_{i,j}))}.
\end{align*}
Here, we use the following parametrization of the tree — for a path label $c$ and depth label $\ell$, $z(c,\ell)$ is the index of the node among $\{1,\dots,K\}$, such that $v_k$ is at depth $\ell$ from the root along path $\varphi^c$ in the DRT. Let $z^{-1}(k) := \{(c,\ell): c\in [I], \ell\in[J], z(c,\ell)=k\}$ denote the set of all tuples of path and depth such that it locates node $k$. The following priors are used for simplicity, using symmetric Dirichlet parameterized by a scalar parameter
\begin{align*}
    \pi &\sim \text{Dir}_I(\pi_0), \\
    \beta_m &\overset{i.i.d.}{\sim} \text{Dir}(\alpha), \\
    \theta_k &\overset{i.i.d.}{\sim} \text{Dir}(\eta).
\end{align*}
The joint distribution of $\corpus, C, L, \Theta, \beta, \pi$ (where $\Theta$ includes all the topics $\theta_1,\dots,\theta_K$ and $\beta$ includes all the document allocations $\beta_1,\dots,\beta_m$) is given by
\begin{align*}
    p\left(\corpus, C, L, \Theta, \beta, \pi\right) &= p\left(\corpus|C,L\right) p(C,L|\Theta,\beta,\pi)p(\Theta|\eta)p(\pi|\pi_0)p(\beta|\alpha)\\
    &=p(\pi|\pi_0)p(\Theta|\eta)\prod_{i\in[m]} p(C_i|\pi)p(\beta_i|\alpha)\prod_{j\in[n]} p(L_{i,j}|\beta_i)p(X_{i,j}|\Theta, C_i, L_{i,j}).
\end{align*}
Employing the Dirichlet-multinomial conjugacy, one can marginalize out $\Theta, \beta$ and $\pi$ to obtain
$p\left(\corpus, C, L\right) = p(\corpus|C, L) p(C, L)$ where
\begin{align}
    p\left(\corpus\Big| C, L\right) &= \left(\frac{\Gamma(V\eta)}{\Gamma(\eta)^V}\right)^K \prod_{k\in[K]} \frac{\prod_{v\in [V]} \Gamma(N_{vk}+\eta)}{\Gamma(N_{\cdot k} + V\eta)} \label{eq: X given C,L}\\
    p(C, L) &= \left[\frac{\Gamma(I\pi_0)}{\Gamma(\pi_0)^I}\frac{\prod_{i\in[I]} \Gamma(M_i + \pi_0)}{\Gamma(M_\cdot + I\pi_0)}\right] \times \left[\left(\frac{\Gamma(J\alpha)}{\Gamma(\alpha)}\right)^m \prod_{i\in [m]} \frac{\prod_{k\in \varphi^{c_i}} \Gamma(\tilde{N}_{ik} + \alpha)}{\Gamma(\sum_{k\in \varphi^{c_i}} \tilde{N}_{ik} + J\alpha)}\right]. \label{eq: C,L}
\end{align}
In the above expressions, $k\in \varphi^i$ indicates iterating only over those $k$ for which node $v_k$ is in the $i$-th path in the DRT; and for each such $i$, there are $J$ such $k'$s. The count matrices $M\in \bbN_0^{I}, N\in \bbN_0^{V\times K}$ and $\tilde{N}\in \bbN_0^{m\times K}$ (where $\bbN_0=\bbN\cup \{0\}$) are defined as follows
\begin{align*}
    M_\ell &= \sum_{i\in[M]} 1(C_i = \ell),  \\
    N_{vk} &= \sum_{i\in[m], j\in[n]} 1\left(X_{i,j}=v, (C_i, L_{i,j})\in z^{-1}(k)\right), \\
    \tilde{N}_{ik} &= \begin{cases}
        0, &\text{if } k \text{ not in path } C_i \\
        \sum_{j\in[n]} 1(L_{i,j}=\ell), &\text{if node } k \text{ is at depth } \ell \text{ in path } C_i.  
    \end{cases} 
\end{align*}

Next, we shall describe the Gibbs sampler over just $C$ and $L$. For $L$, we have the following update
\begin{align}\label{eq: update L}
    p\left(L_{i,j}=\ell|L_{-(i,j)}, C, \doc^m\right) \propto \frac{N_{x_{i,j}, z(c_i,\ell)}^{-(i,j)} + \eta}{N_{\cdot z(c_i,\ell)}^{-(i,j)} + V\eta} \times \left(\tilde{N}_{i,z(c_i,\ell)}^{-(i,j)} + \alpha\right),
\end{align}
where $-(i,j)$ just indicates the count without taking the current assignment of $L_{i,j}$ into account.

The update for $C$ is a bit more involved since it affects multiple documents together. Note that if we change $C_i$ from $c'$ to $c$, then the second (product) term in Eq. \eqref{eq: C,L} stays the same (since $L_i$ stays the same — the topics along path $\varphi^{c_i}$ might change, but the counts along the depths stay the same).

\begin{align}\label{eq: update C}
    p\left(C_i=c|C_{-i}, L, \corpus\right) \propto (M_{c}^{-i} + \pi_0) \times \prod_{k\in[K]} \left(\frac{\Gamma(N_{\cdot k}^{-c_i} + V\eta)}{\Gamma(N_{\cdot k}^{(c_i=c)} + V\eta)} \times \prod_{v\in[V]} \frac{\Gamma(N_{v k}^{(c_i=c)} + \eta)}{\Gamma(N_{v k}^{-c_i} + \eta)}\right),
\end{align}
where $N_{vk}^{-c_i}$ is the corresponding count without considering the assignment for $c_i$ while $N_{vk}^{(c_i=c)}$ considers the particular assignment for $c_i$ being $c$. These count terms can be computed efficiently by simply saving the word-topic count matrix for each document.

\textit{Overall Sampler:} The Gibbs sampler performs the following updates per iteration:
\begin{enumerate}
    \item Update $L_{i,j}$ using Eq. \eqref{eq: update L} for $i\in[m], j\in[n]$
    \item Update $C_i$ using Eq. \eqref{eq: update C} for $i\in[m]$.
\end{enumerate}
Given $C, L$, we can estimate the parameters by using the following equations (posterior means of the corresponding variables given others)
\begin{align*}
    \hat{\pi}_\ell &= \frac{M_\ell + \pi_0}{m + I\pi_0}, \\
    \hat{\beta}_{ik} &= \frac{\tilde{N}_{ik} + \alpha}{\sum_{k\in \varphi^{c_i}}\tilde{N}_{ik} + J\alpha}, \\
    \hat{\theta}_{kv} &= \frac{N_{vk} + \eta}{N_{\cdot k} + V\eta}.
\end{align*}

To monitor the Gibbs sampler, we can approximate the likelihood of the data as the harmonic mean of $p(\corpus|C,L)$ based on a sample of size $S$ from $p(C,L|\corpus)$ \citep{kass1995bayes}, which can be drawn from the Gibbs sampler described above. We have implemented the sampler in Jax. To overcome the sampler getting stuck at local modes, we restarted the sampler several times in our simulations, and report using the chain corresponding to the highest log-likelihood of the data, approximated as discussed above.

\section{Experiments}\label{sec:numerical exp}
\subsection{Simulation studies}

We conducted a number of simulations to study the performance of the algorithms and gain further understanding into the latent structure estimation in our model. The primary objectives for the experiments are understanding the (a) estimation rate and (b) identifiability through model selection. Further details about the experiments are provided in the Appendix \ref{app:numerical exp}. For estimation rates, we consider the metric 
$$d_{L_2}(\rho, \rho') = \min_{\sigma\in\bbS_I} \sum_{i=1}^I \min_{\tau_i\in \bbS_J} \sum_{j\in[J]} \norm{\theta_{i,j} - \theta_{\sigma(i), \tau_i(j)}'}$$
for the ease of computation, compared to the theoretical union Hausdorff metric. Here $\theta_{i,j}$ is the topic corresponding to the node at level $j$ along path $i$ in the fixed DRT $\cT$. The intuition for this choice is clear --- for each pair of component polytopes, their `distance' is the sum of the distances between an optimal matching of their extreme points. Finally, the overall metric is optimally matching the components, following this metric. We show that $d_{\cU\cH}(\rho,\rho')\leq d_{L_2}(\rho,\rho')$ in the Appendix — hence, estimation rate in terms of $d_{L_2}$ provides an upper bound for the rate in terms of $d_{\cU\cH}$. 

\subsubsection*{Experiment 1: } In the first set of simulations, we consider a directed and rooted tree of size $I=2, J=3, K=5$ sharing only the root. Thus, in this model there are 2 component polytopes sharing one extreme point. We take $V=10$ as the vocabulary size. The true topics were drawn from a Dirichlet distribution $\text{Dir}_V(1.0)$ (uniform), the path probabilities were set to be uniform and $\alpha_0=0.8$. Two choices were taken for the number of words per document $n=50, 100$ and for each setting of $n$,  we selected $m$ on an equi-spaced grid from 200 to 5000 (8 values on a log scale). For each choice of $(m,n)$, the experiment was repeated $L=15$ times, each time drawing a corpus of size $(m,n)$ from the model (keeping the same parameter) and using the Gibbs sampler (we ran for 5500 iterations, dropping the first 5000 as burn-in and using a thinning of 10, giving us 50 samples) to estimate the topics. The DRT is shown in Figure \ref{fig: exp1exp2 setting} (top row left column) and the two images on the right show the true topic polytopes and the estimated topic polytope (for an instance in the case $n=50, m=200$), each point is a document (this is the document mean $\hat{\eta}_i$) projected on the first and second principal component directions (middle column) and second and third (right column), the solid triangles are the true component polytopes (sharing one common vertex), while the dotted blue triangles are the estimated polytopes. 

\begin{figure}
    \centering
    \includegraphics[width=0.9\linewidth]{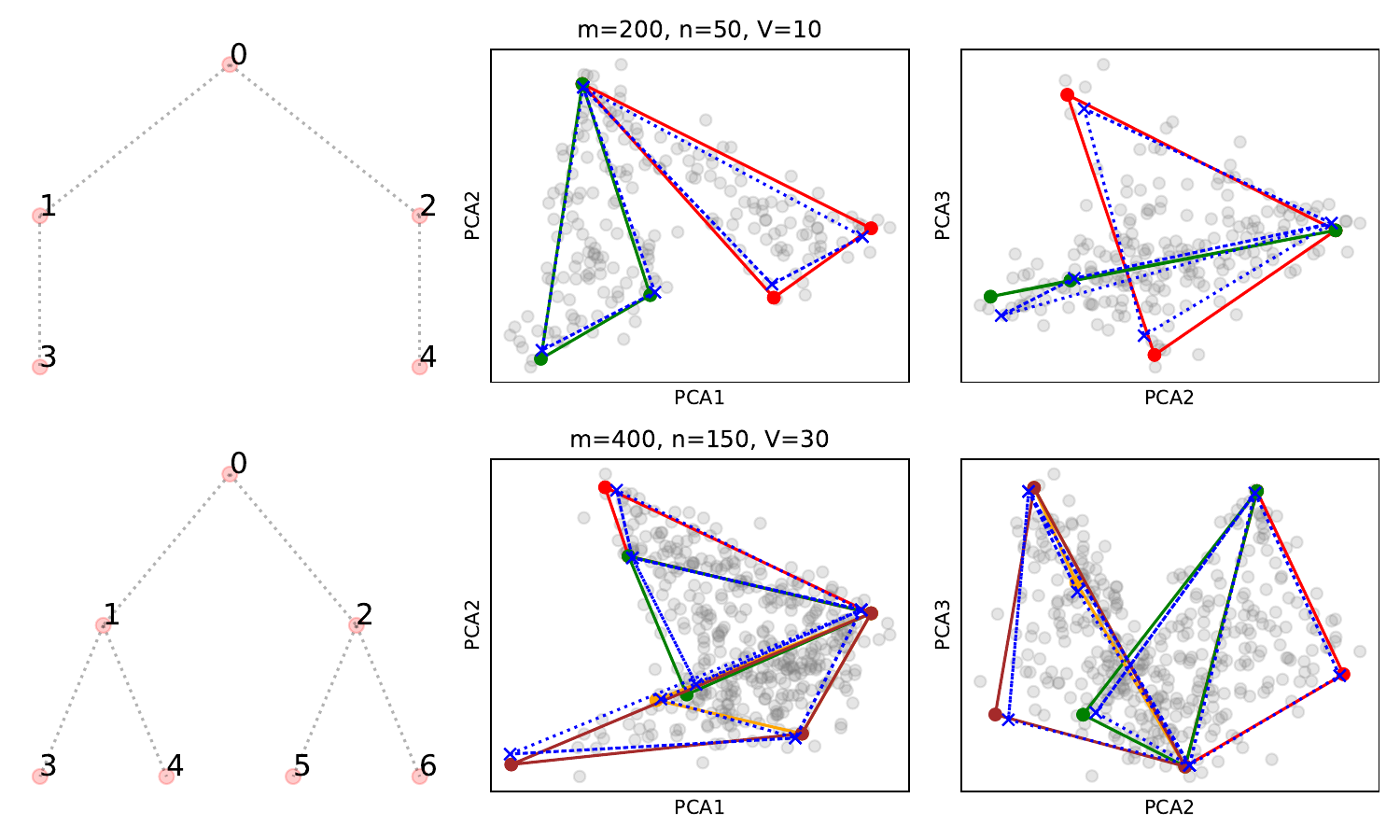}
    \caption{Illustrations for Experiments 1 (top row) and 2 (bottom row): The left-most column shows the underlying DRTs, the middle and right columns show the true polytopes, estimated polytopes and documents projected on the labelled PCA-subspaces (for one particular run of the experiments, $m,n$ given at the top respectively). Each gray circle represents a document (as the document mean), the true polytopes are shown in solid colors and the estimated polytopes in blue dotted lines.}
    \label{fig: exp1exp2 setting}
\end{figure}

For each instance, the performance of our methods was measured by the $d_{L_2}$ metric, computed on the sample of size 50. The result is shown in Figure \ref{fig: exp1 exp2 rates} (Left), where we plot the $\log d_{L_2}$ against $\log m$ (color indicates the two choices of $n$). We can see a similar trend for both $n$ (somehow in this simulation, $n=100$ had higher standard errors compared to $n=50$ with very similar means) — the estimation rate of the topics under $d_{L_2}$ metric is indeed very close to a parametric rate, as the slope suggests (obtained by fitting a linear regression model). Our theory in Section \ref{sec:parameter estimation} showed an upper bound of $[\log m(1/m+1/n)]^{1/4}$ (in the union Hausdorff metric) in this model as $\alpha<1$ and $J-1=2$.  The experiment suggests that this upper bound on the parameter estimates may not be tight. Furthermore, the simulation shows that sending $n\to\infty$ is probably not required in this case.

\subsubsection*{Experiment 2: } In the second set of simulations, we consider another DRT of size $I=4, J=3, K=7$ shown in Figure \ref{fig: exp1exp2 setting} (bottom row left column). Thus, in this model there are 4 component polytopes, with one topic shared across all 4 components and two different pairs of polytopes share one common edge — this is shown as the solid colored triangles in the middle and right columns of the bottom row in Figure \ref{fig: exp1exp2 setting}. We take $V=30$ as the vocabulary size in this experiment, $n\in\{150, 300\}$ and $m$ increasing from $400$ to $4000$ (5 values chosen equi-spaced in log scale). The true parameters are generated similarly as in experiment 1, with $\alpha_0=1.0$. We perform a similar study as in experiment 1 (with 15 repetitions for each choice of $m,n$) and Figure \ref{fig: exp1 exp2 rates} (middle) shows that the results, in terms of the rate, are also similar to experiment 1 — the plot shows $\log d_{L_2}$ against $\log m$, having a slope of around $-1/2$, verifying a parametric rate. However, we note that in this case, there is a marked difference between $n=150$ and $n=300$ case — this tells that while $n\to\infty$ might not be necessary for consistent parameter estimation, increasing $n$ improves the estimation rate of the parameters. 

We like to highlight an interesting challenge for this class of model. Although we fit the model using the true underlying DRT, it does not ensure that the Gibbs sampler converges to a \textit{correct sharing structure} within our prescribed computational limit. As an example, compare the plots in Figures \ref{fig: exp1exp2 setting} (bottom row middle column) and \ref{fig: exp1 exp2 rates} (right most column) — the blue dotted triangles are the estimated components polytopes. The Gibbs sampler was run for 5500 iterations for the former, while even with 10000 iterations, the Gibbs sampler in the later case did not improve beyond the shown structure. Note that in the latter plot, although the underlying DRT is the same and there is an estimated topic nearby each of the true topics, the sharing structure is not at all close to the true sharing structure. For our simulations, we used 8 parallel chains and chose the one based on highest log likelihood. This also demonstrates that $d_{L_2}$ metric by itself does not give a full picture of the accuracy of the estimation of the latent structure. In experiments 1 and 2, we checked for this accuracy in the following way: we first find a permutation $\sigma\in\bbS_K$ minimizing $\sum_k \Vert\theta_k - \hat{\theta}_{\sigma(k)}\Vert$, i.e., an optimal matching of the estimated topics to the true topics. Then we checked whether estimated topics play the same role in the estimated topic hierarchy as the true counterparts (based on the optimal matching) — i.e., for example, if $\theta_1$ is the root of the true topic hierarchy (the vertex shared by all the 4 polytopes), then whether $\hat{\theta}_{\sigma(1)}$ is the vertex shared by all the estimated polytopes (also similar checks for all the other topics). In Figure \ref{fig: exp1 exp2 rates} (right), this is not true since the true root of the topic hierarchy is the rightmost point, which is closest to an estimated topic (right-most blue topic) which is a topic not shared by any two polytopes (hence a leaf). In all of the instances in experiments 1 and 2 (using 8 parallel chains), the true sharing structure was estimated correctly in this sense.

\begin{figure}
    \centering
    \includegraphics[width=0.96\linewidth]{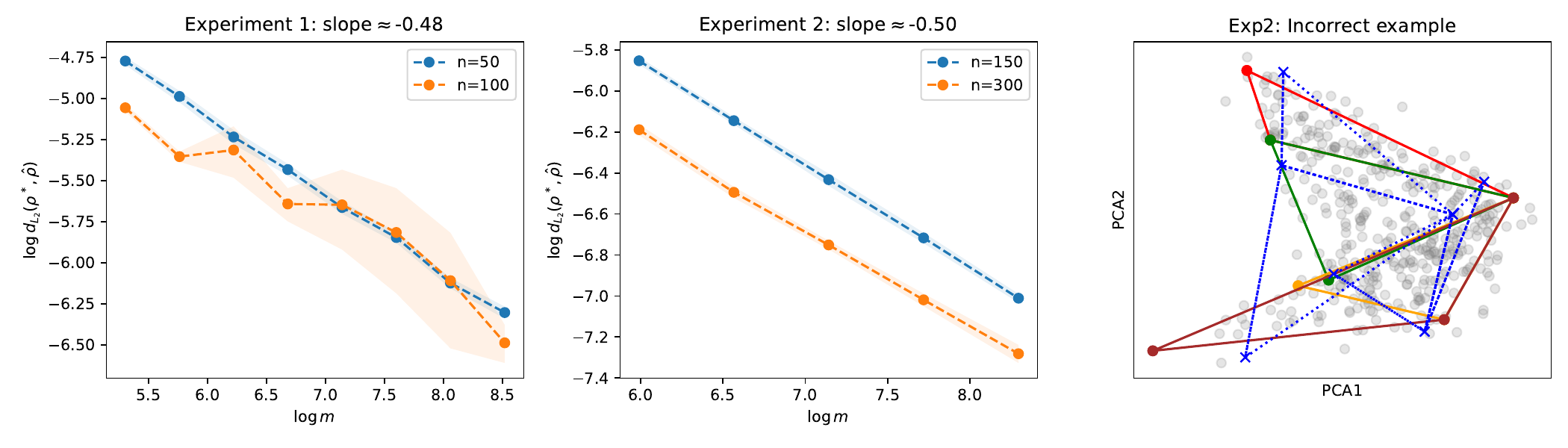}
    \caption{(Left) Results for Experiment 1, (Middle) Results for Experiment 2 and (Right) One example of Gibbs sampler getting stuck at a local mode (compare with Figure \ref{fig: exp1exp2 setting} bottom row middle column).}
    \label{fig: exp1 exp2 rates}
\end{figure}

These two experiments validate that the latent topic hierarchy can indeed be consistently estimated from the corpus under our model. While we provide an upper bound for the estimation rate for this latent structure, the experiment demonstrate that it is not tight. We conjecture that the estimation rate is parametric for any fixed $n$, i.e., $\epsilon_{m,n}\approx c_n / \sqrt{m}$, where the constant $c_n$ depends on $n$ — in particular, higher $n$ would correspond to a lower $c_n$, however, the estimation is consistent as long as $n\geq n_0$ for some minimal $n_0$. The nature of this dependence remains unknown, but there is some hint about such dependence based on recent results for simpler models such as the LDA \citep{anandkumar2012spectral} and finite mixtures of product distributions \citep{wei2022convergence}.


\begin{figure}
    \centering
    \includegraphics[width=0.8\linewidth]{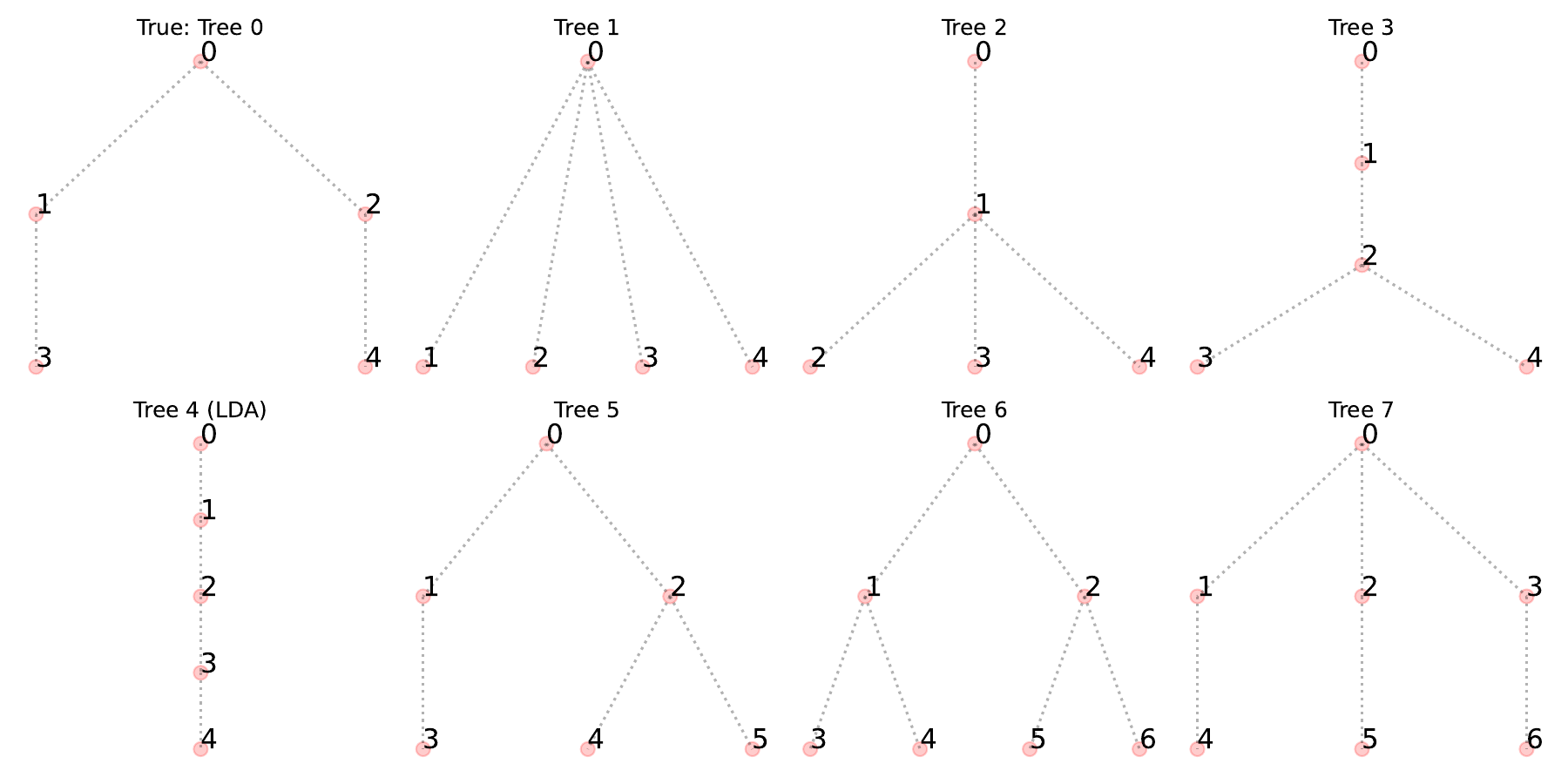}
    \caption{The collection of DRTs considered for Experiment 3.}
    \label{fig: exp3 DRTs}
\end{figure}

\begin{figure}
    \centering
    \includegraphics[width=0.98\linewidth]{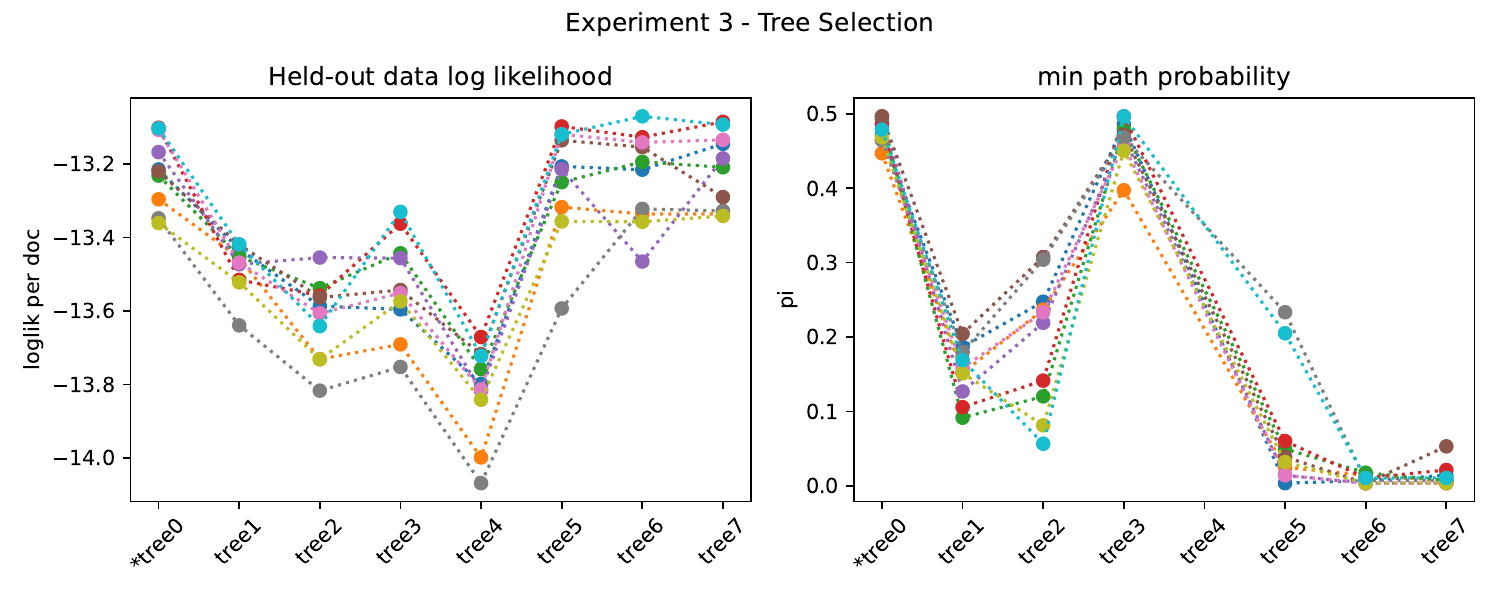}
    \caption{Results from Experiment 3 with \textit{tree0} as the true DRT: (Left) held-out log likelihood per document for each tree (each color represents a particular corpus); (Right) minimum path probability for each tree (\textit{tree4} is left out, since it is an LDA, equivalent to a tree with a single path and hence, the path probability is 1).}
    \label{fig: exp3 result}
\end{figure}

\subsubsection*{Experiment 3: } The third set of simulations is designed to demonstrate a DRT selection problem. The true underlying DRT is the same as in Experiment 1. However, we use 8 different choices of DRT (the whole collection is shown in Figure \ref{fig: exp3 DRTs}) — out of these, the first 5 all have $K=5$ vertices, while the last 3 have $K>5$. The tree named \textit{tree4} has just $I=1$ path and is equivalent to a LDA model with $K=5$ topics (notice that the true DRT has 5 topics). We generate data using the true DRT and fit the model using each of the considered trees. In this experiment, we fix $n=60, m=400$ and $V=10$. For each simulation (which is repeated $10$ times), we train the model with $m=280$ ($70\%$ documents) randomly chosen documents and keep the remaining $30\%$ as held-out data — finally, we compute the log-likelihood per document for the held-out dataset (which is often called perplexity for topic models). The held-out log likelihood for each tree across each experiment is shown in Figure \ref{fig: exp3 result} (left) — each colored line represents results from a particular corpus instance (out of the 10). We make the following observations: among the first 5 trees, the true DRT (\textit{tree0}) has the highest held-out log likelihood, however, the last 3 have almost similar held-out log likelihood — this makes sense, since the first 5 all have $K=5$ topics, while the remaining have higher $K$ (they are all trees with the true DRT being a sub-tree of it) and higher $I$ (number of paths) than the true DRT. As a result, it might be possible that some of the paths are assigned very low probability — to check this, we also plot $\min_i \hat{\pi}_i$, the minimum value of the estimated path-probabilities. We note that the last 3 trees have significantly smaller values compared to \textit{tree0}. Furthermore, although \textit{tree1} and \textit{tree2} have same $K$ as \textit{tree0}, they have $I=4$ and $I=3$ respectively (true $I=2$), and thus, they too have significantly low minimum path probabilities. This experiment shows that methods involving held-out log likelihood can be used for the DRT selection task. The results in this experiment support our study of identifiability in Section \ref{sec:identifiability}.

\subsection{Real data example}



In this section, we provide an analysis of a subset of the 2016 New York Times articles using our model to explore the hierarchical structure among latent topics. The original dataset contains 8887 documents across multiple categories (the categories were extracted from the news articles URLs) from articles during April to June, 2016. For this study, to keep the presentation simple and interpretable, a small subset of these documents were selected corresponding to 4 categories (\textit{world-middleeast, world-americas, sports-hockey, sport-soccer}). This resulted in a corpus of size $m=641$. As pre-processing, all words were converted to lower case and lemmatized, punctuation and words which occurred in less than 5 documents or more than $60\%$ of the documents were removed. Finally, the top 500 words were selected from the remaining vocabulary corresponding to highest document frequencies. This effectively reduced the vocabulary size from over 100,000 to $V=500$. Each document was still quite large, the average size of a document turned out to be around $n\approx 200$. This resulted in a corpus with $131,415$ words. 

A visualization of the data is given in Figure \ref{fig: nyt example intro} (left two figures). Each document is represented as a point in the two dimensional space spanned by the first and second principal components (left) and second and third (right) — the points are colored by their true category. Note that for model fitting, information about such category labels are not used in any way. In Figure \ref{fig: nyt example intro}, observe that the documents lie on roughly two triangular polytopes, sharing a common vertex. We also note that the two sides (one side contains the orange and red point and one contains the blue and green) clearly represent two very different topics — world and sports. Within a topic, it is harder to differentiate visually between the subtopics. Nonetheless, this provides evidence into the geometric viewpoint that we discuss throughout this work — namely, to understand the latent structure as arising from a collection of topic polytopes, restricted to certain vertices being shared.

The data set was randomly splitted into $70\%$ for training and $30\%$ as held-out. We considered a variety of DRTs (given in the Appendix) with $J\leq 4$ and $I\leq 6$. For each DRT, model fitting was achieved using the collapsed Gibbs sampler with 8 restarts —- we dropped first 5000 iterates as burn-in and then retained 50 samples at intervals of 10 (hence, a total of 5500 iterations for the sampler). For DRT selection, we considered a few criteria motivated by interpretability and considerations arising from this work. Firstly, the log-likelihood per document was computed on the held-out data for each DRT. We also collect measures to check (i) redundancy of a topic within a component polytope, (ii) separation across the component polytopes and (iii) path probabilities. For (i), we computed the \textit{width} of each component polytope, defined as the minimum projection distance of a vertex to the affine hull spanned by the remaining vertices, and minimum edge length. For (ii), we computed minimum minimal matching distance $d_{M}$ between distinct component polytopes, where 
$$d_{M}(\cS_i, \cS_j) = \max_{\theta\in\extr \cS_i}\min_{\theta'\in\extr \cS_j} \norm{\theta-\theta'} \vee \max_{\theta'\in\extr \cS_j}\min_{\theta\in\extr \cS_i} \norm{\theta-\theta'},$$
which is shown to be equivalent to the Hausdorff distance between the component polytopes in \cite{NguyenPosteriorContractionPopulation2015}. Since each of the component polytopes has the same dimension for each DRT, we also computed the Grassmanian distance between the affine hulls of the component polytopes (angle between the affine spaces). For (iii), we collected the minimum estimated path probability for each DRT. The full results are shown in Appendix. Based on the results, the DRT was selected, see in Figure \ref{fig: nyt topic hierarchy}, with top 10 words for each estimated topic shown in the corresponding vertices, and also the corresponding path probabilities.

\begin{figure}
    \centering
    \includegraphics[width=0.98\linewidth]{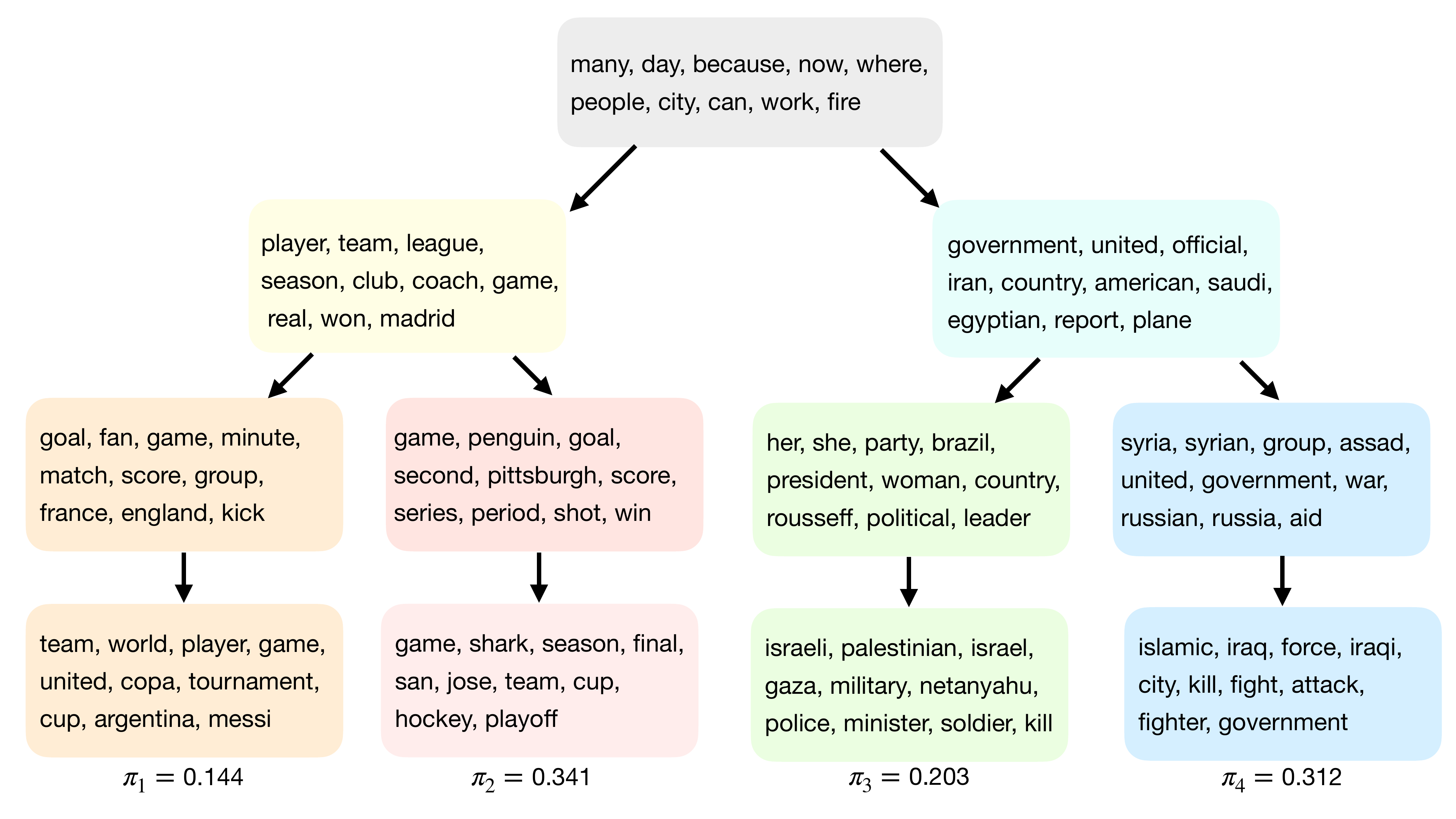}
    \caption{Estimated Topic Hierarchy for NYT articles (showing 10 top words for each topic) — DRT with $I=4, J=4, K=11$.}
    \label{fig: nyt topic hierarchy}
\end{figure}

The estimated topic hierarchy shown in Figure \ref{fig: nyt topic hierarchy} captures quite nicely the corpus structure. We emphasize that the news categories (shown in colors in Figure \ref{fig: nyt example intro}) were not used while fitting the model — the tree-directed topic model captured the topics in a completely unsupervised manner from the corpus. The root node naturally carries words that can occur commonly in all types of documents. The topics in level 1 (light yellow and light blue) evidently capture the two major topics in the corpus, namely, \textit{sports} and \textit{world}. For the sports topic, the top 10 words are mostly common to both soccer or hockey, except \textit{real, madrid}, which originate from the famous soccer club Real Madrid. For the \textit{world} topic, we see however, that there is slightly higher influence of middle-eastern countries (e.g., we find the words \textit{iran, egyptian, saudi}). The topics in the bottom two layers of hierarchy capture the unique words for each suptopic. For this DRT, since each node in the penultimate level only has 1 child each, these two levels are exchangeable from the model's perspective. Indeed, the child nodes for the sports topic divide into soccer (light orange boxes) and hockey (light red boxes). The topics for hockey capture common team names such as \textit{Pittsburgh Penguins} and \textit{San Jose Sharks} while the \textit{soccer} topics contain words for famous European teams such as \textit{England, France} and also famous player like \textit{Lionel Messi} from \textit{Argentina}. We note that words \textit{goal, game, score} appear in both of these paths -- ideally, we would want the parent topic to put more probability to these words since they appear in both. We note that the word \textit{united} appears in both the soccer path, as well as the \textit{world} topic -- however, upon closer inspection, these two in fact have very different origin. The former \textit{united} comes from \textit{Manchester United} (a popular team in soccer), while the latter comes from the \textit{United States of America} and \textit{United Arab Emirates}. On the right side in the \textit{world} sub-tree, we find the paths behaving slightly differently. The left child of \textit{world} is clearly talking about the political situation in \textit{Brazil}  as the at-the-time woman president \textit{Dilma Rousseff} was succeeded by \textit{Michael Temer} in 2016. The child node of this is however again from the middle-east, representing the \textit{Israel-Palestine} conflict about the \textit{Gaza} strip (\textit{Benjamin Netanyahu} has been the Prime Minister of Israel). Closer inspection revealed many of the articles where about struggle of Palestinian women during these troubled times, and hence possibly the model placed this node in the same path as its parent was owing to the fact that the latter puts high probability to words like \textit{her, she, woman}. The right child of the \textit{world} topic is mostly about \textit{Syrian} civil war (\textit{Bashar Al-Assad} is the President of Syria) and \textit{Russian} aid. Its child is mostly about the \textit{War in Iraq}, between Iraq and its allies and the \textit{Islamic State}, making this fourth path a very distinct \textit{middle-east} sub-topic of \textit{world}. However, the third path has a mix of both \textit{American} and \textit{middle-east} world politics. The estimated topic polytopes are shown in Figure \ref{fig: tree20} — each of the component polytopes are of dimension $3$ (tetrahedrons) and we can see (middle figure) how it captures the two distinct clouds corresponding to the sports and world categories, sharing a vertex in the middle.
\begin{figure}
    \centering
    \includegraphics[width=0.97\linewidth]{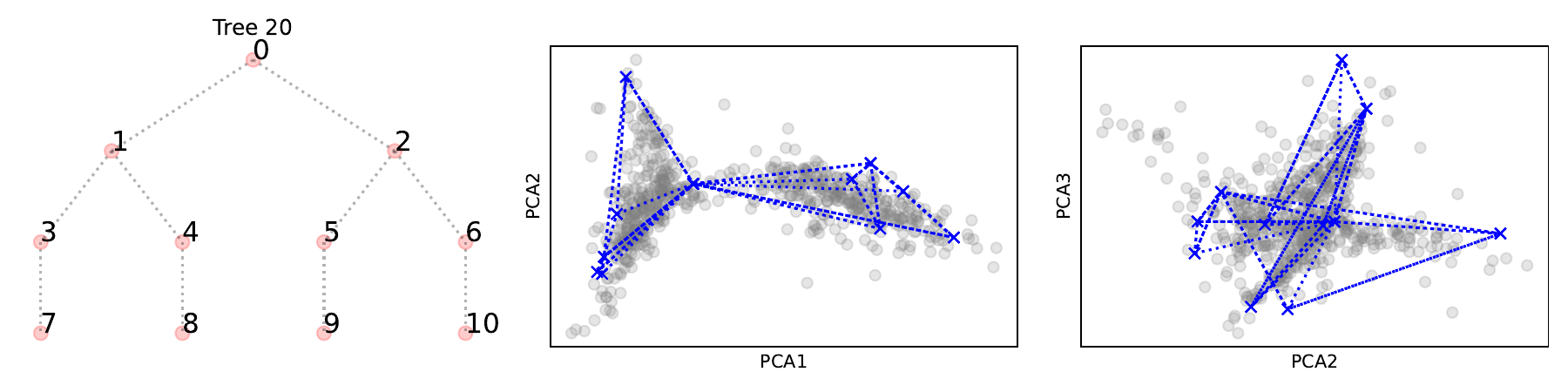}
    \caption{DRT and estimated topic polytopes for the model whose hierarchy is shown in Figure \ref{fig: nyt topic hierarchy}.}
    \label{fig: tree20}
\end{figure}

Next, we present the results corresponding to a DRT which resembles our existing knowledge of the news category's \emph{structure} in the corpus (i.e., a binary tree of 7 vertices without further information). The estimated topics are shown in Figure \ref{fig: nyt hierarchy tree7} and the estimated component topic polytopes were shown in Figure \ref{fig: nyt example intro} (right two figures). In this case, we find that the four paths majorly capture the four news categories and correctly captures the hierarchy. Compared to the previous tree, the top level topics are comparable, while there are also a few notable differences: firstly, for the \textit{soccer} path, the previous tree had a distinction between \textit{European soccer} (penultimate node) and \textit{South American soccer} (node in the bottom level), while for this tree, they are combined together. For the path corresponding to \textit{hockey}, it seems both the last two level nodes in the previous tree were about the Stanley Cup and Pittsburgh Penguins winning over San Jose Sharks in the final. The third path in the previous tree was unique since it combined documents from both American and Middle-Eastern world politics; in the present tree, they are put in different paths. Furthermore, there is no clearly identifiable topic about the War in Iraq, as we have seen in the previous tree. Even when a single best model is challenging to determine for real corpus with complex and latent contextual information such as the current example, we have demonstrated that interpretability of the learned models is feasible according to the varying model constraints. It is quite interesting that the latter tree captures correctly the news category tag assigned to the documents, and moreover discovers predominant topics from these categories of NYT articles from April to June 2016. 
The former tree due to its larger size also captured additionally meaningful topics and information of interest (including the war in Iraq, the distinction between European and South American soccer and connection between Syria-Palestine-related articles and Brazil politics-related articles).

\begin{figure}
    \centering
    \includegraphics[width=0.98\linewidth]{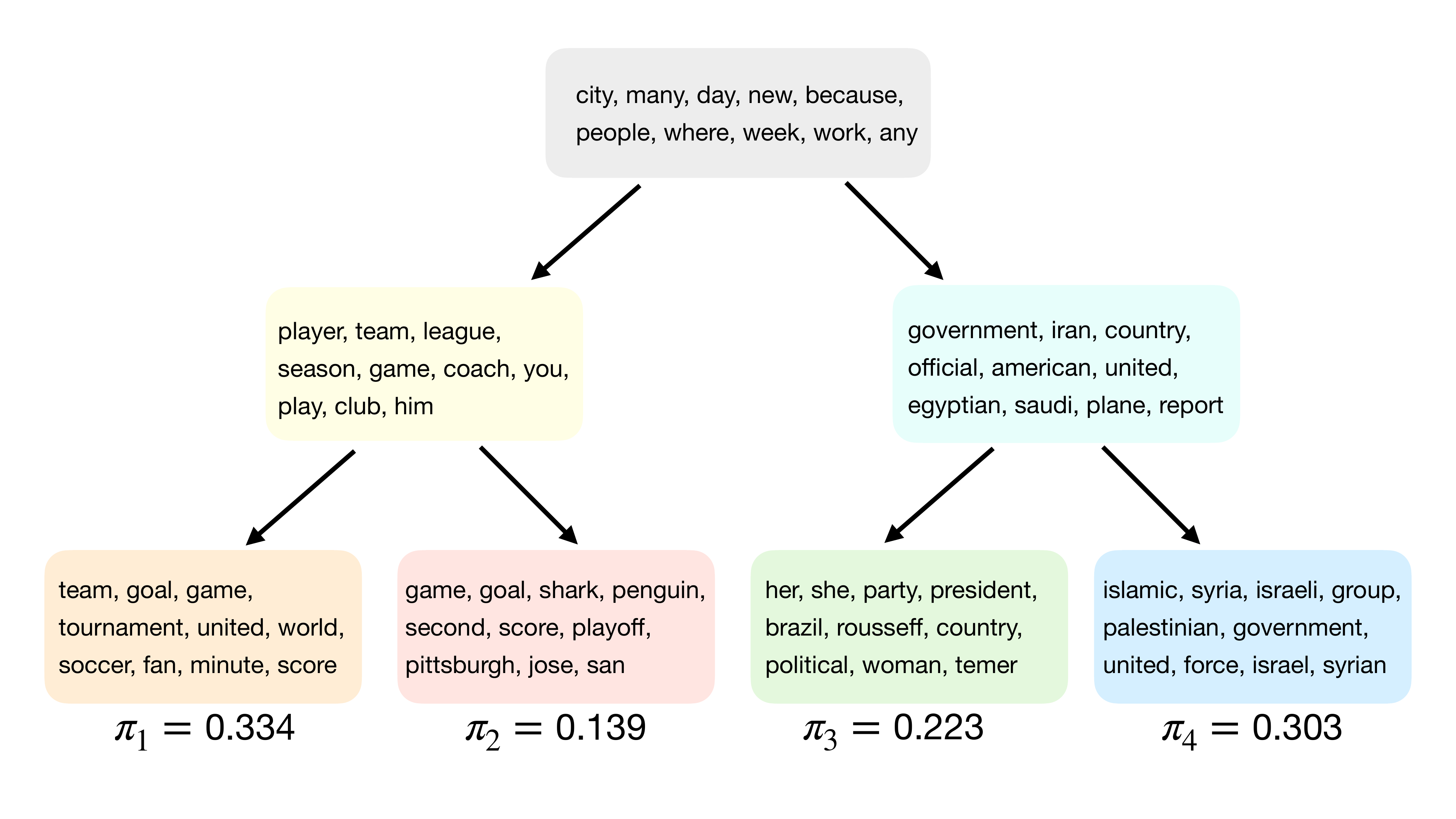}
    \caption{Estimated Topic Hierarchy for NYT articles (showing 10 top words for each topic) — DRT with $I=4,J=3,K=7$}
    \label{fig: nyt hierarchy tree7}
\end{figure}
\section{Conclusion}
\label{sec:conclude}
We studied a class of topic models which enables the learning of a latent hierarchy of topics enforced through a directed rooted tree (DRT) structure. The probabilistic model was formulated using the underlying tree and a topic map, which provide the embedding of tree-structured hierarchy into the space of distributions on vocabulary (the vocabulary simplex). It was established in this work that the latent topic hierarchy is identifiable under suitable regularity conditions, and can be consistently estimated from the text corpus to provide meaningful insight into the latent structure of the corpus. Examples of non-identifiability were provided when the regularity conditions are not satisfied. Our approach relies on exploiting the geometric structures underlying the model, which can be seen as representing a collection of topic polytopes sharing particular extreme points or faces constrained by a tree-structured hierarchy. As we have demonstrated, such a structure adapts quite well to complex text corpora, where the existing topic models such as the LDA were rather inadequate. 
While our model may be seen as a mixture of the LDA-induced distributions, it is the restriction on the sharing patterns among the LDA components that makes the model significantly richer and more interesting, and the theoretical analysis more challenging.
We validated our model by numerical experiments and demonstrated its usefulness in the analysis of New York Times articles, for which the latent topics captured by the topic hierarchy were learned and meaningfully interpreted. 

This paper opens up several venues for future work. On the computational side, we used collapsed Gibbs sampler with a known DRT — utilizing the geometric insights obtained from this paper should allow developing more efficient geometric algorithms for estimating the topic polytopes.  In terms of theory, deriving tighter upper bounds than what were presented in Section \ref{sec:parameter estimation} and obtaining the minimal $n_0$ for which this model is identifiable, thereby extending the theory in Section \ref{sec: identifiability2}, are interesting problems; we believe that new techniques would be required to address such questions. It is also of interest to consider estimating the tuning parameter, i.e., parameter $\alpha$, for the distribution on vocabulary simplex, or at least investigating into the effect of this hyperparameter on estimation. Given the highly general identifiability results presented herein, consistent estimation for the underlying DRT is also a direction worth pursuing further. 

\bibliographystyle{apalike} 
\bibliography{ref}   

\newpage
\begin{appendices}
\section{DRTs and Isomorphisms}
\begin{figure}
    \centering
    \includegraphics[clip, trim=6cm 12cm 8cm 10cm, width=0.98\textwidth]{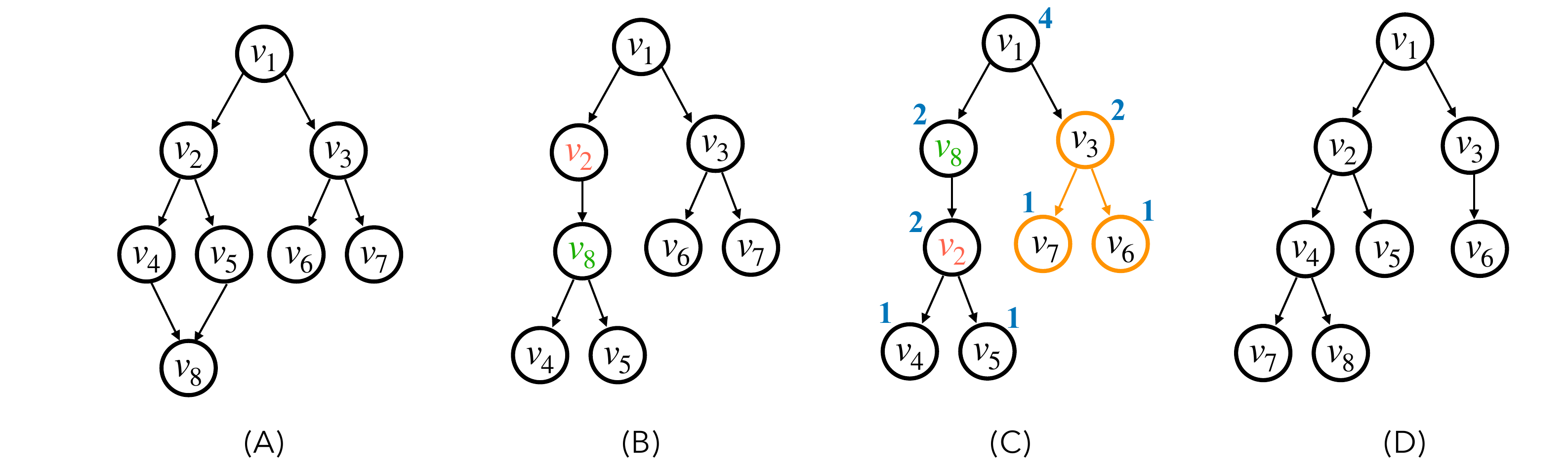}
    \caption{Directed Rooted Tree: (A) is not a DRT since the underlying undirected graph has a cycle, although the graph has no directed cycle. (B), (C), (D) all are DRTs of size $(I=4, \bJ=\{2,2,3,3\}, K=8)$. Notice how (B), (C) have the same intuitive structure (nodes $v_2$ and $v_8$ are swapped) while (D) has a totally different structure - the first two are isomorphic, while (B) and (D) are not. (C) also shows membership value of each node (blue number beside each node) - e.g. if we look at node $v_3$, the subtree starting from it is highlighted in yellow and has 2 leaves.}
    \label{fig:DRT-defn}
\end{figure}

We recall a few standard definitions from graph theory \citep{harary1969graph}. A \textit{graph} consists of a set of vertices and a set of edges connecting pairs of vertices (in this paper vertex and node are used interchangeably). A directed edge is an ordered pair of vertices $(u,v)$ which links from a \textit{parent} vertex $u$ toward a \textit{child} vertex $v$ while an undirected edge is a set of two vertices $\{u,v\}$ which connect $u$ and $v$ (there is no sense of direction). If the edges in a graph are directed, we call it a directed graph. A \textit{trail} is a sequence of vertices $v_1,\dots,v_{\ell}$ in an undirected graph such that consecutive pairs are connected and no edge is repeated. A cycle is a trail with same starting and ending vertices. A tree is an undirected graph with no cycle. In a directed graph, a vertex is called a \textit{root} if it has no parent and a vertex is called a \textit{leaf} if it has no children. The distance to a vertex from a root is defined as the length of the shortest trail from the root to that vertex. Finally, the undirected underlying graph of a directed graph $(V,E)$ is an undirected graph with the same set of vertices $V$ and set of edges $\{\text{set}(e): e\in E\}$.
 
\subsubsection*{Membership function} We define a function \textit{membership}, denoted by $\mathfrak{m}$, on the vertices of a DRT, $\mathfrak{m}:\cV\to \bbN$. Suppose $u\in\cV$ is a node in a DRT $\cT=(\mathcal{V},\mathcal{E},v_0)$. Then, the subtree starting at $u$ is a DRT. Suppose it is of size $(I',\bJ',K')$, then we define membership of $u$ in the original DRT as $I'$. This function (illustrated in blue in (C) in Figure \ref{fig:DRT-defn}) plays an important role in the sequel. Some trivial properties of $\mathfrak{m}$ include that $\mathfrak{m}(v_0)=I$ and $\mathfrak{m}(v)=1$ for all leaf nodes $v\in\cV$. Furthermore, due to the DRT structure $\mathfrak{m}(v)=\sum_{u: (v,u)\in \mathcal{E}} \mathfrak{m}(u)$, which basically says that the membership of a node is the sum of memberships of its children. This implies that if $u$ is a descendant of $v$ (i.e. there is a directed trail from $v$ to $u$), then $\mathfrak{m}(v)\geq \mathfrak{m}(u)$. We refer to this as the monotonicity of the membership function across trails. Suppose $(v,w_1,\dots,w_k,u)$ is a directed trail in the tree, then $\mathfrak{m}(u)=\mathfrak{m}(v)$ if and only if $v$ and each of $w_1,\dots,w_k$ has no other child apart from the one in this trail. As we shall see, the trails, as a set, in such a DRT starting from the root to a leaf gives rise to the topic hierarchy. In this context, some directed trees, while not DRT themselves (for example (A) in Figure \ref{fig:DRT-defn}), can be expressed as a DRT (e.g. (B) or (C) in Figure \ref{fig:DRT-defn}) which preserves these trails. In Figure \ref{fig:DRT-defn}, any of (A), (B) and (C) has exactly 4 trails (as sets) $\{v_1,v_2,v_4,v_8\}, \{v_1,v_2,v_5,v_8\}, \{v_1,v_3,v_6\}$ and $\{v_1,v_3,v_7\}$.

\subsection{Proof of Lemma \ref{lemma:tree}}
\begin{proof}
 Given a collection of subsets, each of which arise as the set of nodes along a maximal path of a DRT, we explicitly construct the DRT and show that such a construction is unique for the tree structure. Let $\{C_1,\dots,C_I\}$ be these subsets arising from some DRT $\cT$, which immediately imply that $\cT$ must have $I$ leaves. The algorithm would proceed by constructing the root and adding a child to one of the existing nodes at each step. It is worth reminding that if $(u,w_1,\dots,w_k,v)$ is a directed trail in $\cT$ with $\mathfrak{m}(u)=\mathfrak{m}(v)$, then each of $u,w_1,\dots,w_k$ has no other child apart from the one in this trail — this also indicates that any other tree where the nodes $\{u,w_1,\dots,w_k,v\}$ are permuted is still isomorphic to $\cT$ while preserving the collection of subsets along maximal paths. 

 \begin{figure}
    \centering
    \includegraphics[clip, trim=5cm 12cm 4cm 7cm, width=0.98\textwidth]{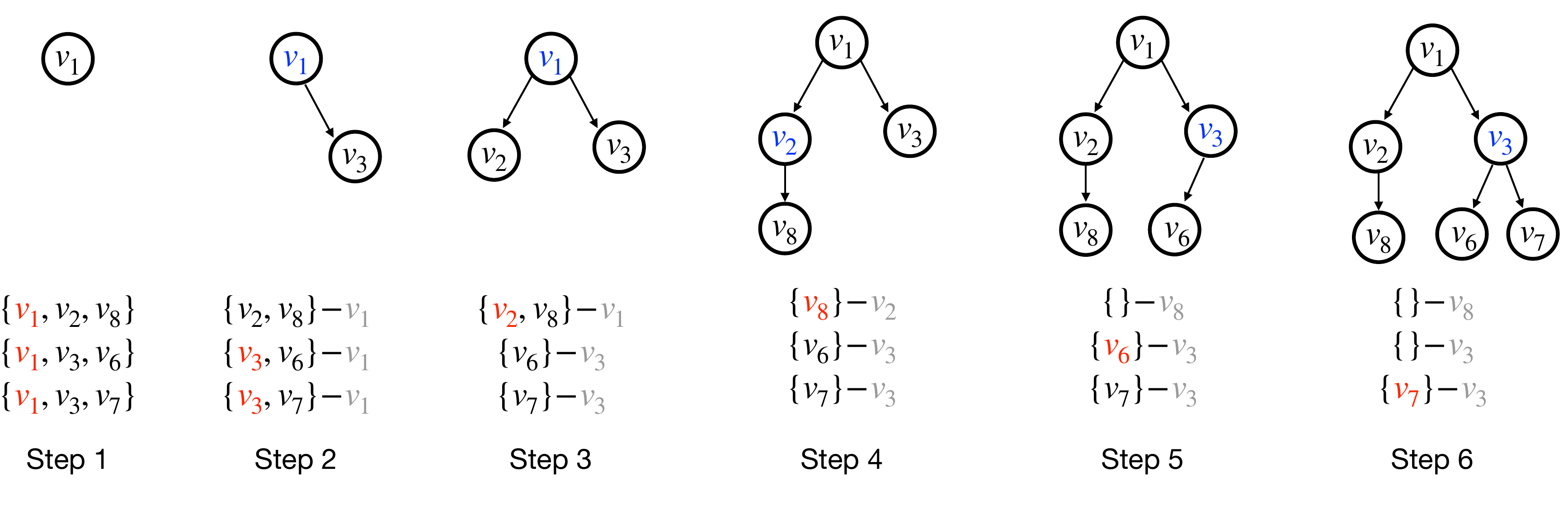}
    \caption{Illustration of the construction in the proof of Lemma \ref{lemma:tree}. Given a collection of subsets, construct the underlying DRT: Consider Step 2, $v_1$ (in blue) is the desired parent, we look at histories of each current subset (in gray outside each subset) and select the element which occurs in a maximal number of times, here $v_3$ (in red). Hence $v_3$ is added as a child of $v_1$ in this step. Now, $v_3$ is removed from all subsets which contained it and the histories are updated for the next step. The only ambiguous choices in this procedure occur in Steps 3 and 5: for Step 3, any of $v_2$ or $v_8$ could have been chosen, this would have only resulted in the nodes $v_2$ and $v_8$ being swapped, keeping the tree structure same. For Step 5, whether $v_6$ is chosen first or $v_7$ does not matter, both would have ended up as children of $v_3$ and the trees are same (not just isomorphic).}
    \label{fig:unique_tree_from_subsets}
\end{figure}

 Firstly, $\cap_i C_i\neq \emptyset$, since this collection arise from a DRT which has a root. Note that there might be more than one element in this intersection (e.g. consider $\cT$ to be the subtree starting from $v_2$ in (B) in Figure \ref{fig:DRT-defn}, $C_1\cap C_2 = \{v_2,v_8\}$). Pick any one of them, say $v_0$ and start a tree with $v_0$ as the root. Each successive step in this algorithm proceeds as follows: once a new child for an existing node in the tree constructed so far is identified, it is attached at the parent node and this element is removed from all current subsets which shared this vertex and had the parent node as history. A history $h$ is maintained for each of the subsets which keep track of the last element removed from this set. Thus when the root is fixed, we set $h_i=v_0$ for all $i$.

 For each node $v$ in the tree so far, gather all $C_i$ whose history is $v$ and select $u\in \cap_{i:h_i=v} C_i$ which appears a maximal number of times in these subsets. Note that the number of times it appears must be the membership number for that particular node in any tree which preserves the subsets. If there is a unique $u$ satisfying the above, it must be a child of $v$. If not, firstly $u$ must be a descendant of $v$ (since there were subsets which had both $v$ and $u$) — suppose $(v,w_1,\dots,w_k,u)$ be such a trail, then $\mathfrak{m}(w_1) \geq \mathfrak{m}(u)$, which contradicts the unique maximality property of $u$. If there are more than one such $u$ (say the maximal count is $n$), pick any of them. We note that any other node which is shared by the current $C_i$ with history $v$ and count less than $n$ cannot be a child of $v$ (since membership must be monotone). Attach this as a child of $v$ and make the changes to the subsets (remove $u$ from each subset which had $v$ as history and contained $u$) and their history (change history of each of these subsets to $u$) accordingly. Repeat until all subsets are emptied (for an illustration, see Figure \ref{fig:unique_tree_from_subsets}). 

 \textbf{Step 1:} Suppose at a stage we select $u$ as the child of a $v$ already in the tree and decide to append $u$ to the tree for the next step. We first note that $u$ cannot be present in any other subset whose history was not $v$. If not, suppose $u$ is also present in some other subset with history $w$. From the tree so far, get the smallest subtree containing both $v$ and $w$ and let $r$ be the root of this subtree with at least two children $r_1, r_2$ (since there are two different trails from $r$ to $v$ and $w$ and this is the smallest possible such subtree). This means that either there was a subset with $r,r_1$ and no $r_2$ or there was a set with $r,r_2$ and no $r_1$ (otherwise based on the construction, $r_1,r_2$ would not be separate children of $r$), lets call this $\star$. Now we note that since $u$ appears in subsets with history $v$ (descendant of $r_1$ say) and $w$ (descendant of $r_2$), there were two subsets one containing $r,r_1,v,u$ and one containing $r,r_2,w,u$. Now we recall that the original collection comes from some DRT $\cT$ and hence in that DRT, there is either a unique trail from the root to $u$ containing $r$ or a unique trail from the root to $r$ containing $u$. In this tree if either $r_1$ or $r_2$ is in this trail, then $\star$ is violated since, say $r_1$ is in the trail, any subset containing $r,r_2$ would contain $r_1$. Thus, both $r_1, r_2$ must be descendants of $r$ — one cannot be descendant of the other since again $\star$ is violated. Thus, there must exist two distinct trails from the root to $r_1$ and to $r_2$ containing both $r,u$. This means $\mathfrak{m}(r_1)<\mathfrak{m}(u)$ and $\mathfrak{m}(r_2)<\mathfrak{m}(u)$, which contradicts the choice that $r_1$ and/or $r_2$ were chosen as children of $r$ in the construction when $u$ had a higher count. Thus, we establish that after a node $u$ is attached to a parent $v$ by this process and the node $u$ is removed from all subsets containing it with history $v$, then $u$ is not present in any other subset -- this implies that the process does not lead to any tree where the underlying undirected graph has a cycle.
 
  \textbf{Step 2: }We argue that particular decisions at stages where are multiple choices do not affect the final tree structure. Suppose in this process we come to a stage for the first time when there are more than one such maximal elements. Suppose at this stage, the current subsets are $\tilde{C}_1,\dots,\tilde{C}_I$ and the target parent is $u$ in the tree (which must appear in at least one of the $\tilde{C}_i$'s histories, otherwise this cannot be a parent). Suppose $w_1,\dots,w_L$ all have the maximal count. There must be a partition of these $w_\ell$ into $E_1,\dots,E_Q$ such that $w_i,w_j\in E_q$ if and only if $w_i$ and $w_j$ both appear together in a subset. To prove this, start with $w_1$ and start filling a set $E_1$ with $w_1$ and all other $w$'s which appear together with $w_1$ in some subset. If this covers all $w$'s, we terminate. Else, there is some other $w_2$ not in this set and do the same process. In this process, suppose we come across a $w\in E_1$ that is present with $w_2$ in some subset. We argue that is not possible. If so, it means there is a subset with $w_1,w$ and not $w_2$ (otherwise $w_2$ would be in $E_1$) and a subset with $w_2,w$ and not $w_1$ (again else, $w_2$ would be in $E_1$). Then we can append $w_1$ as a child of $w$ and then $w_2$ as a child of $w$ (still one of the maximal elements) and end up with $w$ being in two subsets with different histories ($w_1$ and $w_2$), which by Step 1, is not possible. Thus, we are sure to have such a partition.

  Now we note that for each such $E_q$ of size $L_q$, the tree must have a trail $u\rightarrow v_1\rightarrow\dots\rightarrow v_{L_q}$, where the $v_{\ell}$ is any permutation of $E_q$, such that each of $v_1,\dots,v_{L_Q-1}$ has only one child. If any of these has another child, then it contradicts the fact that they all had the same maximal count (recall membership strictly decreases among children iff there are more than 1). Thus, all possible trees that incorporate these nodes can only differ up to a permutation of the nodes within a trail of the above form and it is easy to see that a bijection can be defined between any two of such trees which preserve the connectivity. Thus, all such trees are isomorphic. 
    
 This process terminates in finite time since $\cup_i C_i$ is finite to start with. This construction not only ensures that each $C_i$ can be attained from the constructed tree's maximal paths but also $\mathfrak{m}$ is monotone across levels of a trail. Furthermore, every step of this process is fixed and necessary, except possibly stages where the choice of $u$ is not unique, where we showed that all possible different trees must be isomorphic. Thus, the tree structure is unique.

Now, for the last part of the lemma, we can use the same argument as above. Starting with the collection of subsets obtained from $\cT$, we repeat the construction as in the proof. In this case, there will be no ambiguity at any stage regarding the order in which to add children, since $\mathfrak{m}(u)<\mathfrak{m}(v)$ (strict inequality) whenever $u$ is a descendant of $v$ in $\cT$. Thus, if we find multiple nodes having same membership at a stage as potential children, they all have to be children of the parent node and hence, the underlying DRT can be reconstructed uniquely.

\end{proof}

\textbf{Remark.} We add a remark on the uniqueness of topic tree's levels. The level of a node (i.e., the depth of the node from the root) represents the position of the corresponding topic in the topic hierarchy. In practical applications, starting from the root, which represents common topic shared by all documents, the parent node has more than one children at each level (other than leaf). This incorporates the idea that each topic has several subtopics with each considered as a further specialization within that topic. Other than this interpretation, this assumption makes the level of a particular node unique. As an illustration, consider trees (B) and (C) in Figure \ref{fig:DRT-defn} and suppose we use a common $\rho, \pi$ for these two trees. Clearly, they lead to the same model. However the level of $\rho(v_2)$ and $\rho(v_8)$ are not uniquely identified. The last part of Lemma \ref{lemma:tree} gives a sufficient condition under which levels of each topic are unique.

We note that \textit{any} collection of such subsets may not arise from a DRT. The condition that we know that \textit{there exists a DRT} which gives rise to this collection is necessary. Lemma \ref{lemma:tree} implies that if we know that there is at least one DRT which generates this collection, then any other DRT which also generates it must be isomorphic. An example of a collection of subsets which cannot arise from a DRT would be $\{\{v_1,v_2,v_4\}, \{v_1,v_2,v_3\},\{v_1,v_3\}\}$. Any directed tree which generates these must contain an undirected cycle.

\section{Moments of the Model}\label{app:moments}
In this section we discuss the moments of $\bX\in[V]^n\sim p_{\doc|\omega}$. We note that $\bX$ is a conditionally iid draw from a Categorical distribution with parameter $\sum_{k=1}^K h_k \theta_k$, where the distribution of $h\in\Delta^{K-1}$ follows a mixture of sparse Dirichlet distributions, which we delve into in this section.

We say $h$ follows a mixture of sparse Dirichlet distributions (MSD) with respect to DRT $\cT$ and parameters $\pi\in \Delta^{I-1}$ and $\alpha\in\bbR^I_+$, if $h$ has a mixture distribution: with probability $\pi_i$, $h_{\varphi_i}\sim \text{Dir}_{J_i}(\alpha_i)$ and $h_{\varphi_i^c} = 0$, where $h_{\varphi_i}= (h_k : v_k \in \varphi_i)$. Thus, with probability $\pi_i$, only a part of $h$ (as dictated through the paths of $\cT$) is non-zero and follows a Dirichlet distribution, while the remaining part is degenerate with all coordinates 0. To illustrate, consider the left DRT in Figure \ref{fig:DRT_example}. Since $K=7$, we have $h\in \Delta^{6}$. Let $P_1$ be the distribution of $h$ such that $(h_1,h_2,h_4)\sim \text{Dir}_3(\alpha_1)$ and $h_3=h_5=h_6=h_7=0$ - this corresponds to the first path (whose leaf is $v_4$). As another example, $P_3$ (corresponding to path ending at $v_6$) is the distribution of $h$ such that $(h_1,h_3,h_6)\sim \text{Dir}_3(\alpha_3)$ and $h_2=h_4=h_5=h_7=0$. Then $h\sim \sum_i \pi_i P_i$. Thus, for our model, $\doc|h \sim \otimes^n \text{Cat}(\Theta^\top h)$ with $h\sim \text{MSD}(\pi,\alpha|\cT)$, where $\Theta$ is the $K\times V$ matrix with topics $\theta_k$ as the rows (note that it includes all the topics in the model). We look at the moments of the distribution of $h$ first.

For the first moment, $\bbE[h_k] = \sum_i \pi_i \bbE_{P_i}[h_k] = \sum_i \pi_i \frac{\alpha_i}{J_i\alpha_i}1(v_k\in\varphi_i) = \sum_{i\in[I]: v_k\in\varphi_i} \pi_i/J_i$. Note that this implies
$$v_{k_1},\dots,v_{k_L} \text{ are children of } v_k \Rightarrow \bbE[h_k] = \sum_{\ell\in[L]} \bbE[h_{k_\ell}].$$
In particular, for the leaves of the DRT, if a leaf $v_k$ is in path $\varphi_i$, then $\bbE[h_k] = \pi_i/J_i$ and starting at the roots, $\bbE[h_k]$ for other nodes can be computed as the sum of the values for its children, as given in the above display. We remark that this property arises because of the symmetric Dirichlet distribution being used ($\alpha$ depends only on the path, not the depth). Similarly, for the second moment, we have
\begin{align*}
    \bbE[h_{k_1}h_{k_2}] = \sum_i \pi_i \bbE_{P_i}[h_{k_1}h_{k_2}] = \begin{cases}
        \sum_{i:v_{k_1}\in\varphi_i} \pi_i \frac{\alpha_i(\alpha_i+1)}{J_i\alpha_i(J_i\alpha_i+1)} &\text{if } k_1=k_2 \\
        \sum_{i:v_{k_1}, v_{k_2}\in\varphi_i} \pi_i \frac{\alpha_i^2}{J_i\alpha_i(J_i\alpha_i+1)} &\text{if } k_1\neq k_2
    \end{cases}
\end{align*}
using the second moments for the Dirichlet distribution (sum over an empty set is 0). Similarly, we can use the $n$-order moments of the Dirichlet distribution to compute the $n-$order moments for $h$. We illustrate this with the example in case of the left DRT in Figure \ref{fig:DRT_example}, with $\rho_1$ as in the figure. As before, $h\in\Delta^6$ in this case and suppose $\pi_1,\pi_2,\pi_3,\pi_4$ correspond to paths $\varphi_1,\dots,\varphi_4$, ending in leaves $v_4, v_5, v_6, v_7$ respectively. The first moments for $h$ and hence $X_1\sim p_{\bX_{[1]},\omega}$ are as follows (note that $J_i=J (=3)$ in this case):
\begin{align*}
    M^{(1)} := \bbE [h] = \left(
        \frac{1}{J},
        \frac{\pi_1+\pi_2}{J},
        \frac{\pi_3+\pi_4}{J},
        \frac{\pi_1}{J},
        \frac{\pi_2}{J},
        \frac{\pi_3}{J},
        \frac{\pi_4}{J}\right)^\top, \quad \bbE [X_1] = \Theta^\top M_{(1)}.
\end{align*}
The second moment for $h$ is the $K\times K$ matrix $M^{(2)}$ with $M^{(2)}_{k_1,k_2} = \bbE[h_{k_1},h_{k_2}]$. The form of $M^{(2)}$ and corresponding second moment of the observations for this case takes the form
\begin{align*}
    M^{(2)} = \bbE[h\otimes h] &= \begin{pmatrix}
        a_{1,2,3,4} & b_{1,2} & b_{3,4} & b_{1} & b_{2} & b_{3} & b_{4} \\
        b_{1,2} & a_{1,2} & 0 & b_{1} & b_{2} & 0 & 0 \\
        b_{3,4} & 0 & a_{3,4} & 0 & 0 & b_{3} & b_{4} \\
        b_{1} & b_{1} & 0 & a_{1} & 0 & 0 & 0 \\
        b_{2} & b_{2} & 0 & 0 & a_{2} & 0 & 0 \\
        b_{3} & 0 & b_{3} & 0 & 0 & a_{3} & 0 \\
        b_{4} & 0 & b_{4} & 0 & 0 & 0 & a_{4} \\
    \end{pmatrix},\\
    \bbE [X_1 \otimes X_2] &= \Theta^\top M^{(2)} \Theta.
\end{align*}
where
\begin{align*}
    a_{\cI} &= \sum_{i\in \cI} \frac{\pi_i (\alpha_i+1)}{J_i(J_i\alpha_i + 1)} \\
    b_{\cI} &= \sum_{i\in \cI} \frac{\pi_i \alpha_i}{J_i(J_i\alpha_i + 1)}.
\end{align*}
We note that $M^{(2)}_{k_1,k_2}$ is non-zero only if there is a path in $\cT$ such that $v_{k_1}$ and $v_{k_2}$ both are on it. Thus, the sparse-structure of $M^{(2)}$ inherits from the tree structure of $\cT$. In a similar manner, higher order moment tensors $M^{(n)}$ can also be expressed in terms of the parameters, allowing us to compute $\bbE[X_1\otimes\cdots\otimes X_n]$ for $\doc \sim p_{\doc|\omega}$. We remark that $M^{(n)}$ will also be sparse, with $M^{(n)}_{k_1,\dots,k_n}$ only non-zero if there exists a path in $\cT$ such that $v_{k_1},\dots,v_{k_n}$ are all on it (here $k_1,\dots,k_n\in[K]$). 

As opposed to the study of identifiability in \cite{anandkumar2012spectral}, the moment tensors $\bbE[X_1\otimes X_2\otimes X_3], \bbE[X_1\otimes X_2]$ and $\bbE[X_1]$ cannot be `diagonalized' (algebraically combined to form $\Theta^\top D \Theta$, with $D$ as a diagonal tensor) easily, even with the knowledge of $\alpha$. Applying techniques in tensor decomposition, including Kruskal decomposition, is also not straightforward, precisely because of this challenge in diagonalising the moments. We hope to resolve this issue in the future - also, we expect to recover the sparsity pattern in $\bbE[h\otimes h]$ from the data moments, which would help in estimating the tree structure as well. We postpone these to future work.

\section{Proofs in Section \ref{sec:identifiability}}

\subsection{Proof of Lemma \ref{lemma: dh+ triangle} - augmented tree-directed Hausdorff metric satisfies the triangle inequality}
\begin{proof}
    Let $\varphi^1_j,\dots,\varphi^I_j$ be enumerations of the maximal paths of the DRTs in $\omega_j$ for $j\in\{1,2,3\}$. Let $\sigma_{12}$ be the optimal permutation for $d_{\mathbb{H}+}(\omega_2,\omega_3)$ as in Equation \eqref{def: augmented tree directed Hausdorff metric} and similarly $\sigma_{23}$ for $d_{\mathbb{H}+}(\omega_2,\omega_3)$. Let $\sigma = \sigma_{23}\circ \sigma_{12}$, then clearly $\sigma\in \mathbb{S}(I)$. Then we have
    \allowdisplaybreaks
    \begin{align*}
        d_{\mathbb{H}+}(\omega_1, \omega_3) &\leq \sum_i \Big[d_{\mathcal{H}}(G^1_i, G^3_{\sigma(i)}) + \abs{\pi^1_i - \pi^3_{\sigma(i)}}\Big] \\
        &\leq \sum_i \left[d_{\mathcal{H}}(G^1_i, G^2_{\sigma_{12}(i)}) + d_{\mathcal{H}}(G^2_{\sigma_{12}(i)}, G^3_{\sigma(i)}) + |\pi_i^1 - \pi_{\sigma_{12}(i)}^2| + |\pi_{\sigma_{12}(i)}^2 - \pi_{\sigma_(i)}^3|\right] \\
        &= \sum_i \left[d_{\mathcal{H}}(G^1_i, G^2_{\sigma_{12}(i)}) + |\pi_i^1 - \pi_{\sigma_{12}(i)}^2|\right] + \sum_i \left[d_{\mathcal{H}}(G^2_{\sigma_{12}(i)}, G^3_{\sigma(i)}) + |\pi_{\sigma_{12}(i)}^2 - \pi_{\sigma_(i)}^3|\right] \\
        &= d_{\mathbb{H}+}(\omega_1, \omega_2) + \sum_i \left[d_{\mathcal{H}}(G^2_{\sigma_{12}(i)}, G^3_{\sigma_{23}\circ\sigma_{12}(i)}) + |\pi_{\sigma_{12}(i)}^2 - \pi_{\sigma_{23}\circ\sigma_{12}(i)}^3|\right] \\
        &= d_{\mathbb{H}+}(\omega_1, \omega_2) + \sum_{i'} \left[d_{\mathcal{H}}(G^2_{i'}, G^3_{\sigma_{23}(i')}) + |\pi_{i'}^2 - \pi_{\sigma_{23}(i')}^3|\right] \\
        &= d_{\mathbb{H}+}(\omega_1, \omega_2) + d_{\mathbb{H}+}(\omega_2, \omega_3).
    \end{align*}
    Here the first inequality is by definition and the second by the fact that both $d_{\mathcal{H}}$ and $|\cdot|$ are metrics by their own right. The following chain of equalities merely use the definitions of $\sigma_{12}$, $\sigma_{23}$ and $\sigma$.
\end{proof}

\subsection{Proof of Proposition \ref{lemma: tree-directed identifies tree structure}}
\begin{proof}
\textbf{First part:} For fixed $\omega_0$, consider 
\begin{align*}
    r_1(\omega_0) &:= \min \left\{\min_i \min_{\substack{\theta, \theta'\in\extr \cS_i\\\theta\neq\theta'}}\norm{\theta-\theta'}, \min_{i\neq j} \min_{\substack{\theta\in \extr \cS_i \\ \theta'\in\extr \cS_j \\ \theta\neq \theta'}}\norm{\theta-\theta'}\right\}\\
    r_2(\omega_0) &:= \min_{i\neq j \in[I]} d_{\cH}(\cS_i, \cS_j)
\end{align*}
which considers the minimum inter-topic distance between topics from the same component or different topics (not the shared ones) across different components in $r_1$ and the minimum inter-component Hausdorff distance in $r_2$, by our assumptions, $r_1>0, r_2>0$ (first one by injectivity of $\rho$ and the second one by assumption \ref{assume:distinct_components}).

Furthermore, by assumption \ref{assume:all_exposed}, there is $r_3(\omega_0)>0$ such that for all $i\in[I]$ and $\theta\in \extr \cS_i$, there is a hyperplane passing through $\theta$ that makes at least angle $r_3(\omega_0)$ with each of the edges of $\cS_i$ incident at that extreme point $\theta$. Fix $\epsilon_0(\omega_0)$ sufficiently small (to be decided later).

Condition 1 \label{proof:condition1}: $\epsilon_0$ should be such that for all $\omega<\omega_0$, if $d_{\bbH+}(\omega_0,\omega')<\epsilon$, then $r_1(\omega)>r_1(\omega_0)/2$,  $r_2(\omega)>r_2(\omega_0)/2$ and $\omega'$ also satisfies the non-obtuse (existence of hyperplane) property with $r_3(\omega) > r_3(\omega_0)/2$.

Now, we have $d_{\bbH+}(\omega,\omega')<\epsilon \leq \epsilon_0$, this shows that there is a permutation $\sigma \in \bbS(I)$ such that
$$\sum_i \left[d_{\cH}(\cS_i, \cS_{\sigma(i)}') + |\pi_i - \pi_{\sigma(i)}'|\right] < \epsilon$$
without loss of generality, assume $\sigma$ is the identity permutation (otherwise relabel the components). Hence, in particular, $d_{\cH}(\cS_i, \cS_i')<\epsilon$ for all $i$, which by Lemma 1(b) \cite{NguyenPosteriorContractionPopulation2015} yields that $d_{M}(\cS_i, \cS_i')<C_0\epsilon$ where $d_M$ is the minimal matching metric defined in \ref{eq: minimal matching distance} and $C_0$ is a constant depending on $r_3(\omega_0)$. This shows that for every $\theta\in \extr \cS_i$ (because of assumption \ref{assume:all_exposed}, each topic $\theta$ from $\omega$ is an extreme point of some component polytope), there is a $\theta'\in \extr \cS_i'$ such that $\norm{\theta-\theta'}<C_0\epsilon$ (and also vice-versa). Thus, if $\epsilon\leq \epsilon_0 < r_1(\omega_0)/3C_0,$ then $\norm{\theta-\theta'}<r_1(\omega_0)/3$ and hence the balls $\{B(\theta,\epsilon):\theta\in \extr S_i \text{ for some } i\in [I]\}$ are all disjoint. Thus, we have the partitions $\cP_1= \{\{u\} : u\in \cT\}$ for $\omega$ and $\cP_2=\{\{u'\in \cT: \norm{\rho(u)-\rho'(u')}<C_0\epsilon\} : u \in \cT\}$ as a valid partition satisfying that for any $P\in \cP_i$, there is a unique $P'\in \cP_2$ such that for any $u\in P$ and $u'\in P'$, $\norm{\rho(u)-\rho'(u')}<C_0\epsilon$. Let $\tau:\cP_1\to\cP_2$ denote this map $P\mapsto P'$ satisfying the above.

Now consider a component polytope $\cS_1$ of $\omega_0$, with extreme points $\theta_1=\rho(v_1),\dots,\theta_k=\rho(v_k)$ with $v_1,\dots,v_k$ on a maximal path of $\cT$ such that $v_i\in P_i\in \cP_1$ for all $i$ (note that we allow $P_i=P_{i'}$ for $i\neq i'$). Consider $\cS_1'$, a component polytope of $\omega'$ with $d_{\cH}(\cS_1,\cS_1')<\epsilon$. Suppose $\theta_1'=\rho'(v_1'), \dots, \theta_{k'}=\rho'(v_{k'}')$ be its extreme points. We show that $v_{j}'\in \cup_{i\in[k]} P_{\tau(i)}'$ for all $j\in[k']$, i.e. the partitions corresponding to $P_1,\dots,P_k$ (under $\tau$) contain all the $v_1',\dots,v_{k'}'$. If not, suppose $v_1'\in P_1'$ such that $P_1'\neq P_{\tau(i)}'$ for any $i\in[k]$. 

We argue that $\rho'(v_1')$ must be separated from each extreme point of $\cS_1$. Firstly, for any $v\in P_{\tau^{-1}(1)}$, we have $\norm{\rho(v) - \rho'(v_1')}<C_0\epsilon$ by construction of the partitions. Now, for any such $v$, since it is not an extreme point of $\cS_1$, $\norm{\rho(v) - \rho(v_i)}\geq r_1$ for all $i\in[k]$. Hence, $\norm{\rho'(v_1') - \rho(v_i)} > r_1 - C_0\epsilon > 2r_1/3$ (by choice $\epsilon_0<r_1/3C_0$ as earlier) for all $i\in[k]$. This implies that $d_{\cH}(\cS_1, \cS_1') \geq 2r_1/3$, which contradicts $d_{\cH}(\cS_1,\cS_1')<\epsilon$ for $\epsilon < 2r_1/3$. Thus, overall $\epsilon_0$ satisfying Condition 1 above and $\epsilon_0<\min\{r_1(\omega_0)/3C_0(\omega_0), 2r_1(\omega_0)/3\}$ works, all depending on $\omega_0$.

Now, if the number of nodes of $\cT$ and $\cT'$ are same, say $K$, each set in the partition $\cP_2$ must be a singleton containing only one node (note that $|\cP_1|=K$). By the condition proved in the last part, this means that $v_1,\dots,v_k$, with $v_i\in P_i\in \cP_1$, are on a maximal path (equivalently, $\rho(v_1),\dots,\rho(v_k)$ are extreme points of a component polytope $\cS_j$) iff $v_1',\dots, v_k'$ are on a maximal path of $\cT'$, with $v_i'\in P_{\tau(i)}'\in\cP_2$. Since both $\cV=\cV'=[K]$, hence this means that the collection of sets of nodes along maximal paths of $\cT$ and $\cT'$ are the same - by application of lemma \ref{lemma:tree}, we conclude that $\cT\cong \cT'$. Furthermore, $\tau$ identifies a unique isomorphism from $\cT$ to $\cT'$ and $\norm{\rho(v)-\rho'(\tau(v))}<\epsilon$,  which completes the proof. Of course, we can also conclude that corresponding $\pi_i$'s are also $\epsilon-$close.

\textbf{In particular part:} The `if' part of the proof is trivial. We show the `only if' part. Assume $d_{\mathbb{H}+} = 0$. Enumerate the maximal paths of $\cT$ and $\cT'$ to get $\varphi^1,\dots,\varphi^I$ and $\varphi'^1,\dots,\varphi'^I$ respectively. By definition, we have a permutation $\tau$ of $[I]$, such that $\conv(\rho(\varphi^i)) = \conv(\rho'(\varphi'^{\tau(i)}))$  and $\pi(\varphi^i) = \pi'(\varphi'^{\tau(i)})$ for all $i$. By assumption \ref{assume:all_exposed}, this means that for every $i$, the set $\rho(\varphi^i)$ is the same as the set $\rho'(\varphi'^{(\tau(i)})$. This immediately gives that $K=K'$ ($K$ is the number of vertices in $\cT$).

We wish to apply Lemma \ref{lemma:tree}, but that applies only to DRT, not the topic maps. To resolve it, for the tree $\cT=(\mathcal{V},\mathcal{E},v)$ we consider a new DRT $\tilde{\cT}=(\tilde{\mathcal{V}},\tilde{\mathcal{E}},\tilde{v})$, where $\{\rho(u):u\in \mathcal{V}\}$ is the set of vertices and $(\rho(u),\rho(u'))\in\tilde{\mathcal{E}}$ iff $(u,u')\in\mathcal{E}$ and $\tilde{v}=\rho(v)$. Having constructed $\tilde{\cT}$ from $\cT$ and $\rho$ and $\tilde{\cT}'$ from $\cT'$ and $\rho'$, we can now apply Lemma \ref{lemma:tree} to guarantee that $\tilde{\cT}\cong \tilde{\cT}'$. However, by construction, $\cT\cong \tilde{\cT}$ and $\cT'\cong \tilde{\cT}'$, which together gives $\cT\cong \cT'$. 

We are only left to find the unique isomorphism $\sigma$ as in the statement of the lemma. We claim that it suffices to show that one exists. If there were two such isomorphisms $\sigma$ and $\sigma'$, then $\rho(u)=\rho'(\sigma(u))=\rho'(\sigma'(u))$ for every vertex $u$ of $\cT$, this implies that $\sigma(u) = \sigma'(u)$ since $\rho'$ is one-to-one (definition of topic map). Thus, $\sigma=\sigma'$.

Recall the existence of a permutation $\tau$ of $[I]$. Consider a matched pair of maximal paths, $\varphi^1$ of $\cT$ and $\varphi'^{\tau(1)}$ of $\cT'$. Let $\varphi^1=(\varphi^1_1,\dots,\varphi^1_{J_1})$, then since $\conv(\rho(\varphi^1)) = \conv(\rho'(\varphi'^{\tau(1)}))$, by assumption \ref{assume:all_exposed}, $\rho(\varphi^1)$ must be a permutation of $\rho'(\varphi^{\tau(1)})$. Define the unique map $\sigma$ which maps $u\in \varphi^1$ to $u'\in \varphi^{\tau(1)}$ such that $\rho(u)=\rho'(\sigma(u))$. Now, we keep repeating and expanding the domain of $\sigma$. At step $i=2,\dots,I$, take another pair of maximal paths $\varphi^i$ and $\varphi'^{\tau(i)}$. For all $u\in \varphi^i\setminus \cap_{1\leq i <I}\varphi^i$, define $\sigma(u)$ as previously based on the fact that $\rho(\varphi^i)$ must be a permutation of $\rho'(\varphi'^{\tau(i)})$. It is easy to check that the constructed $\sigma$ satisfies all conditions in the result.
\end{proof}

\subsection{Proof of Lemma \ref{lemma: intersection affine space}}
In the lemma, the ambient space is irrelevant. Thus, take $\cA,\cA',\cS$ as subsets of $\bbR^d$, say.
\begin{proof}
    \textit{Part 1:} An affine space $\cA$ can be represented as $\{x\in\bbR^d : Ax=b\}$, i.e., as the solution of a system of linear equations where $A\in \bbR^{m\times d}, b\in\bbR^m$. A convex polytope $\cS$ is the intersection of a set of hyperplanes and can be represented as $\{x\in\bbR^d: Cx\leq d\}$, where the inequality is coordinate-wise, $C\in\bbR^{n\times d}, d\in\bbR^n$. By appending $A$ and $-A$ as rows in $C$ (to form $\tilde{C}\in\bbR^{(n+2m)\times d}$ and appending $b$ and $-b$ to $d$ (to form $\tilde{d}\in\bbR^{n+2m}$, we get the intersection of $\cA$ and $\cS$ as the solution set of $\{x\in\bbR^d : \tilde{C}x\leq \tilde{d}\}$, which by definition is another convex polytope.

    \textit{Part 2:} Similar to part 1.

    \textit{Part 3:} Let $\cA=S + v$, where $S$ is a linear subspace of $\bbR^d$ and $v\in \cA$, similarly let $\cA'=S'+v'$. Then, by assumption, $\dim S=\dim S'$. If $\cA\cap \cA'\neq \emptyset$, let $x\in \cA\cap\cA'$. Then, $\cA=S+ x$ and $\cA'=S'+x$. We claim that $\cA\cap \cA' = (S\cap S') + x$. Any $y\in (S\cap S') + x$ is of the form $y=v + x$ where $v\in S\cap S'$ and hence, $y\in \cA$ and $y\in \cA'$ (as $\cA=S + x$). Conversely, suppose $y\in \cA\cap \cA'$, then $y-x\in S$  and $y-x\in S'$, hence $y-x\in S\cap S'$, thus $y = (y-x) + x \in (S\cap S') + x$. Thus, $\dim \cA\cap\cA' = \dim S\cap S'$. Now, either $\dim S\cap S' = \dim S$ (iff $S=S'$) or $\dim S\cap S' < \dim S$ (otherwise), which proves the statement of the lemma.

    \textit{Part 4:} Similar to part 3. Note that for subspaces $S, S'$ with $\dim S < \dim S'$, either $\dim S\cap S' = \dim S$ (iff $S$ is a subspace of $S'$) or $\dim S\cap S' < \dim S$.
\end{proof}

\subsection{Proof of Theorem \ref{thm:identifiability1}}\label{app:proof of identifiability1}

Before we look at the proof, we make the following observations regarding mixture models of the form $\sum_k \pi_k G_k$, where the mixture components $G_k$ are supported on simplices $\cS_k$ with affine hulls $\cA_k$. For a measure $G$ on $\Delta^{V-1}$ and a measureable $H \subset \Delta^{V-1}$ such that $G(H)>0$, we define the restricted and normalized measure $G|_{H}$ by $G|_{H}(A) = G(A\cap H) / G(H)$. Using Lemma \ref{lemma: intersection affine space}, we have
\setlist{nolistsep}
\begin{enumerate}
    \item For two component measures $G_i, G_j$ supported on $\cS_i, \cS_j$ respectively with dimensions $d_i < d_j$, $\left(\pi G_i + (1-\pi) G_j\right)|_{\cA_i} = G_i$. This occurs since $G_j$ is absolutely continuous with respect to the Hausdorff measure on $\cA_j$ and hence any measurable subset of $\mathcal{A}_i$ is a null set with respect to this measure, since $d_i<d_j$. Note that the other case is not true, i.e. $\left(\pi G_i + (1-\pi) G_j\right)|_{\mathcal{A}_j}$ is, in general, not $G_j$.
    \item For two components $G_i$ and $G_j$ with $d_i = d_j$, by Assumption \ref{assume:diff_affine_space}, $\mathcal{A}_i \cap \mathcal{A}_j$ is a \textit{strictly} lower dimensional flat (by Lemma \ref{lemma: intersection affine space}). Thus, $\left(\pi G_i + (1-\pi) G_j\right)|_{\cA_i} = G_i$ and $\left(\pi G_i + (1-\pi) G_j\right)|_{\cA_j} = G_j$. To prove this, say the first one, we only note that for any measurable $A\subset \cA_i$, $G_j(A)=0$ since either $A\cap \cS_j=\emptyset$ or if they have non-empty intersection, $A\cap \cS_j$ is a null set with respect to $G_j$ (since it is a subset of the strictly lower dimensional flat $\cA_i\cap\cA_j$).
    \item For $G=\sum_k \pi_k G_k$ and affine space $\cA$ such that $G(\cA)>0$, $G|_{\cA}(B) = \sum_k \pi_k G_k(B\cap \cA)/G(\cA)$. This follows from the definition of restricted and normalized measure. Let us denote $G_k(B\cap \cA)/G(\cA)=\tilde{G}_k(B)$ and think of $\pi_k \tilde{G}_k$ as the contribution from component $k$ towards $G|_{\cA}$.  For an affine space $\cA$ and a convex polytope $\cS$, $\cA \cap \cS$ is either $\emptyset$ or $\cS$ or a strictly lower dimensional $(< \dim \cS$) convex polytope (by lemma \ref{lemma: intersection affine space}). For $k$ such that $\cA\cap\cS_k=\emptyset$, clearly, $\tilde{G}_k$ is the 0 measure. For $k$ such that $\dim (\cA\cap\cS_k) < \dim \cS_k$, also $\tilde{G}_k$ is the 0 measure, since for any $B$, $B\cap \cA\cap \cS_k$ is a null set with respect to $G_k$ (recall $G_k$ is absolutely continuous with respect to the Hausdorff measure on $\cS_k$). 
\end{enumerate}
Combining these, we obtain that for a mixture of the form $G = \sum_i \pi_i G_i$, for any affine space $\cA$ of dimension $d$, the only measures among the components which may contribute to $G|_{\mathcal{A}}$ are those for which $\dim G_i \leq d$. Moreover, for a component $G_i$ with $d_i=d$ which contributes to this, it must have $\cA_i=\cA$.

\begin{proof}
    Firstly, let $\doc=(x_1,\dots,x_n)$ with $x_j\in [V]$ and consider the sequence of random variables $\bar{\bX}_n\in\Delta^{V-1}$ where $(\bar{\bX}_n)_v= \sum_{j\in[n]} 1(x_j=v)/n$ being the sample mean. Let $\mu_n$ be the law of $\bar{\bX}_n$, then $\mu_n$ is a probability measure on $\Delta^{V-1}$. Recall from the model (e.g. see Section \ref{sec:model-geom} and Equation \ref{eq: document density2}), that conditional on $\eta \sim G$, $x_j$'s are i.i.d. from the measure $\text{Cat}(\cdot|\eta)$. Thus, conditionally given $\eta$, by Strong Law of Large Numbers, $\bar{\bX}_{n}|\eta \to \eta$ a.s. as $n\to\infty$ which yields $\mu_n \Rightarrow G$. Now, since $V(P_{n,\omega,\alpha}, P_{n,\omega',\alpha'})\to 0$ as $n\to \infty$, then it shows that $G(\omega, \alpha) = G(\omega', \alpha')$. We now show that this implies that the tree topology and the topic map are both uniquely identified.

    Arguing rigorously, we have
    \begin{enumerate}
        \item $\mu_n\Rightarrow G$: $\bar{X}_n$ can be seen as $\bar{X}_n=\sum_j Y_j/n$ where $Y_1,\dots,Y_n|\eta\sim \text{Mult}(1,\eta)$, i.e. $Y_j=e_v$ with probability $\eta_v$ ($\eta\in\Delta^{V-1}$), where $e_k=(0,\dots,0,1,0,\dots,0)^\top$, with $1$ in the $k$th coordinate. We prove that the characteristic function of $\bar{X}_n$ converges to that of $G$ pointwise, which will ensure weak convergence. To start,
        $$\varphi_{Y|\eta}(t) = \bbE_{Y\sim \text{Mult}(1,\eta)} [e^{it^\top Y}] = \sum_v \eta_v e^{it_v}.$$
        Now,
        \begin{align*}
            \varphi_{\bar{\bX}_n}(t) = \bbE \bbE\left[e^{it^\top \sum_j Y_j/n} \mid \eta\right] &= \bbE \left[\varphi_{Y|\eta}(t/n)^n\right] \\
            &=\int \left(\sum_v\eta_v e^{it_v}\right)^n G(d\eta) \\
            &= \int \left(\sum_v \eta_v\left[1 + \frac{i t_v}{n} + O\left(\frac{1}{n^2}\right)\right]\right)^n G(d\eta) \\
            &= \int \left(1 + i\frac{t^\top \eta}{n} + O\left(\frac{1}{n^2}\right)\right)^n G(d\eta) \\
            &\to \int e^{it^\top \eta} G(d\eta), \quad n\to \infty
        \end{align*}
        where the last limit is by dominated convergence theorem. Thus, noting that the last expression is indeed the characteristic function of $\eta\sim G$, we have the desired result. Furthermore, $\bbE[\bar{\bX}_n] = \bbE\bbE[\bar{\bX}_n|\eta] = \eta$, which shows $W_1(\mu_n, G) \to 0$.
        \item Let $\mu_{n|\omega}$ and $\mu_{n|\omega'}$ be the law of $\bar{X}_n$ where $X_1,\dots,X_n|\eta \sim \text{Mult}(1,\eta), \eta\sim G(\omega,\alpha)$ and $G(\omega',\alpha')$ respectively. $\mu_{n|\omega}, \mu_{n|\omega'}$ are measures supported on $\Delta^{V-1}$, a bounded subset of $\bbR^V$. By Theorem 6.15 from \cite{villani2009optimal}, we know that for every fixed $n$, $W_1(\mu_{n|\omega}, \mu_{n|\omega'}) \leq C V(\mu_{n|\omega}, \mu_{n|\omega'})$, where $C=\text{diam} \Delta^{V-1}$ and furthermore, by data processing inequality, we have $V(\mu_{n|\omega}, \mu_{n|\omega'}) \leq V(P_{n,\omega}, P_{n,\omega'})$ (since $\bar{X}_n$ is a statistic of $\bX=(X_1,\dots,X_n)$). Thus, 
        \begin{align*}
            W_1(P,Q) &\leq W_1(P,P_n) + W_1(P_n, Q_n) + W_1(Q_n,Q) \\
            &\leq W_1(P,P_n) + C V(P_{n,\omega,\alpha}, P_{n,\omega',\alpha'}) + W_1(Q_n,Q) \to 0
        \end{align*} showing that $W_1(P,Q)=0$, and hence $P=Q$.
    \end{enumerate}
    
    From our model, $G(\omega,\alpha) = \sum_{i\in[I]} \pi_i G_i$ where $G_i=\text{Dir}_{J_i}(\alpha_i)_{\#} L_i$ with $L_i(\beta)=\Theta_i^\top\beta$ is the component measure corresponding to path $\varphi_i$ of $\cT$ (fix an enumeration) and $\cS_i = \conv \Theta_i$ is the support of $G_i$. Similarly we have $G(\omega',\alpha') = \sum_{i\in[I']} \pi_i' G_i'$, with $G_i'$ supported on $\cS_i'$ (fix enumeration for paths of $\cT'$). Recall each of the constituent measures $G_i$ (resp. $G_i'$) is supported on the polytope $\cS_i$ (resp. $\cS_i'$) and is absolutely continuous with respect to Hausdorff measure on $\cA_i = \aff \cS_i$ (resp. $\cA_i'$). For brevity of notations, we denote $G(\omega,\alpha), G(\omega',\alpha')$ as $G, G'$ respectively. Without loss of generality, we permute the components (relabel) of $G$ so that $d_1\leq d_2 \leq \dots \leq d_I$, where $d_i = \dim \cS_i$, and similarly for components in $G'$.

Now we start the main argument for the proof. We have $G=G'$ and we want to identify the components in the first step. Firstly, it is easy to see that $d_1=d_1'$. If not, suppose $d_1<d_1'$, then taking $B=\cS_1$, clearly $G(B)>0$ while $G'(B)=0$. Thus, we have $d_1=d_1'$. Now we consider $G|_{\cA_1}$. By our arguments in the digression and assumption \ref{assume:diff_affine_space}, we know that $G|_{\cA_1} = G_1$. Thus, $G'|_{\cA_1}$ must also be $G_1$. However, we know the only components contributing to $G'|_{\cA_1}$ are $G_i'$ with $d_i'\leq d_1$. Since $d_1$ is the minimum, hence any such $d_i'=d_1$, and thus any such $i$ must satisfy $\cA_i'=\cA_i$. But by assumption \ref{assume:diff_affine_space} on $\rho'$, we know that no two of such measures (components of $\omega'$) have the same affine hull. This implies that there is a unique $G_i'$ with $d_i'=d_1$ such that $G_i'=G_1$, which would also imply $d_{\cH}(\cS_1, \cS_i') = 0$. Furthermore, since $\pi_1 = P(G_1) = P'(G_i') =\pi_i'$, we conclude that the corresponding mixture probabilities are same too. Thus, we identify a unique component in $G'$, such that it exactly matches $G_1$ (for symmetric Dirichlet as in our case, $G_1=G_i'$ also implies $\alpha_1 = \alpha_i'$ - see ). 

Having identified this component, we remove it, i.e. we consider $\tilde{G} = \sum_{i\neq 1} \frac{\pi_i}{1-\pi_1} G_i$ and $\tilde{G}' = \sum_{j\neq i} \frac{\pi_j'}{1-\pi_i'} G_j'$, since we had $G=G'$ and showed $G_1=G_i', \pi_1=\pi_i'$, we now have $\tilde{G}=\tilde{G}'$. Now, we can apply the same argument iteratively. This process terminates in finite time since at each step we are removing a component from the finite mixture. 
 Note that Assumption \ref{assume:pos} ensures that each step in this process is valid and there are no extraneous components left at the end. Furthermore, in this process, we have ensured that $G'$ also has exactly $I$ components (otherwise the process terminates with one of $\tilde{G}$ or $\tilde{G}'$ having at least one non-trivial component left) and each component of $G$ matches a unique component of $G'$. This implies that $d_{\bbH+}(\omega,\omega')=0$ by definition. 
\end{proof}

\subsection{Proof of Theorem \ref{thm:identifiability2}}
Following the same strategy as in the proof of Theorem \ref{thm:identifiability1}, we arrive at $G(\omega,\alpha) = G(\omega',\alpha')$. Using the same definitions as before, permute the labels of the components of $\omega$ (without loss of generality) such that $d_1\leq \dots \leq d_I$

Firstly, arguing as in the preceding theorem, we  have $\min \{d_i'\} = d_1$. Considering $G|_{\cA_1}$, we get that $G'_{\cA_1} = G_1$ (note that $\omega$ satisfies assumption \ref{assume:diff_affine_space}). However, at this stage we cannot argue that this identifies a unique $G_i'$ as before since $\omega'$ does not necessarily satisfy the assumption (in particular, there might be two components sharing the same affine hull). Here we utilize the knowledge of $I$ to recover the components. Suppose, if possible, $G_1' G_2'$ be two component measures of $G'$ such that $G_1 = \pi_1'G_1' + \pi_2'G_2'$. Then, by argument as in the preceding theorem, it must hold that $\cA_1'=\cA_2'=\cA_1$ - for any other $G_i$ (from $\omega$), either $d_i > d_1$ or $G_i$ is supported on a different affine space (assumption \ref{assume:diff_affine_space}). In either case, there are $I-1$ distinct affine spaces for the $\{G_i : i\neq 1\}$, while there are only $I-2$ possible $\{G_i':i\neq 1, i\neq 2\}$. This cannot be possible since each $G_i'$ accounts for exactly one affine space. Thus, there cannot be two $G_1', G_2'$ accounting for $G_1$. Thus, we conclude that there is a unique $G_i'$ with $\cA_i'=\cA_1$ and $\pi_i' G_i'= \pi_1 G_1$. This gives $d_{\bH}(G_1,G_i')=0$ and $\pi_i'=\pi_1$. The rest of the proof is identical to that of Theorem \ref{thm:identifiability1}.

\section{Proofs in Section \ref{sec:rate}}

\subsection{Proofs in Section \ref{sec:density estimation}}

\subsubsection{Proof of Lemma \ref{lemma: kl_upper_W}}
\begin{proof}
Associate each sample $\bX:=\doc=(X_1,\dots,X_n)$ with a $V$ dimensional vector $\eta(\bX)\in \Delta^{V-1}$, where $\eta(\bX)_v = \frac{1}{n}\sum_{j\in[n]} 1(X_j=v)$ for $v=1,\dots,V$. The density of $\doc$ given $\omega$ takes the form
    $$p_{n,\omega}(\doc) = \sum_{i\in[I]} \pi_i\int_{\cS_i} p(\doc|\eta) G_i(d\eta)=\sum_{i\in[I]} \pi_i \int_{\cS_i} \exp\left(n\sum_{v\in[V]} \eta(\bX)_v \log \eta_v\right) G_i(d\eta).$$
    Given a fixed permutation $\sigma$ of $[I]$, we denote $\cS_i'$ to indicate $\cS_{\sigma(i)}'$ and $\pi_i$ to indicate $\pi_{\sigma(i)}'$ for notational simplicity. For $\bX\in[V]^n$, let 
    \begin{align*}
        a_i(\bX)&=\pi_i\int_{\cS_i} p(\bX|\eta)G_i(d\eta)\quad\text{and}\quad        b_i(\bX)=\pi_{\sigma(i)}'\int_{\cS_i'} p(\bX|\eta')G_i'(d\eta').
    \end{align*} 
    so that 
    $$p_{n,\omega}(\bX) = \sum_i a_i(\bX), \quad p_{n,\omega'}(\bX) = \sum_i b_i(\bX).$$
    By Jensen's inequality, for any $\bX\in [V]^n$,
    $$\sum_i a_i(\bX) \log \frac{a_i(\bX)}{b_i(\bX)} \geq \left(\sum_i a_i(\bX)\right) \log \frac{\sum_i a_i(\bX)}{\sum_i b_i(\bX)}.$$
    Adding over all $\bX\in[V]^n$, we get
    \begin{align*}
        &\KL(p_{n,\omega}\Vert p_{n,\omega'}) \\
        &= \sum_{\bX\in [V]^n} p_{n,\omega}(\bX)\log \frac{p_{n,\omega}(\bX)}{p_{n,\omega'}(\bX)} \\
        &= \sum_{\bX} \left(\sum_i a_i(\bX)\right)\log \frac{\sum_i a_i(\bX)}{\sum_i b_i(\bX)} \\
        &\leq \sum_{\bX} \sum_i a_i(\bX)\log \frac{a_i(\bX)}{b_i(\bX)} \\
        &= \sum_i \sum_{\bX} \pi_i \int_{\cS_i} p(\bX|\eta)G_i(d\eta) \left[\log \frac{\pi_i}{\pi_i'} + \log \frac{\int_{\cS_i} p(\bX|\eta)G_i(d\eta)}{\int_{\cS_i'} p(\bX|\eta')G_i'(d\eta')}\right]
    \end{align*}
    which gives
    \begin{align*}
        \KL(p_{n,\omega}\Vert p_{n,\omega'}) &\leq \sum_i \pi_i \log\frac{\pi_i}{\pi_i'}\underbrace{\sum_{\bX} \int_{\cS_i} p(\bX|\eta)G_i(d\eta)}_{=1} + \sum_i \pi_i \KL\left(\int_{\cS_i} p(\bX|\eta)G_i(d\eta)\Bigg\Vert \int_{\cS_i'} p(\bX|\eta')G_i'(d\eta')\right) \\
        &\leq \sum_i \pi_i \log \frac{\pi_i}{\pi_i'} + \KL\left(\int_{\cS_i} p(\bX|\eta)G_i(d\eta)\Bigg\Vert \int_{\cS_i'} p(\bX|\eta')G_i'(d\eta')\right)\\
        &= \sum_i \pi_i |\log \pi_i - \log \pi_i'| + \KL\left(\int_{\cS_i} p(\bX|\eta)G_i(d\eta)\Bigg\Vert \int_{\cS_i'} p(\bX|\eta')G_i'(d\eta')\right) \\
        &\leq \frac{1}{c_1}\sum_i \pi_i|\pi_i - \pi_i'| + \sum_i \pi_i \left[\frac{n}{c_0}W_1(G_i, G_i')\right],
    \end{align*}
    where the last inequality follows from Lemma 6 in \cite{NguyenPosteriorContractionPopulation2015} (for the second term) and $\min \pi \geq c_1$ (for the first term).
\end{proof}

\subsubsection{Proof of Corollary \ref{corollary: kl upper dH+}}
\begin{proof}
Lemma 7 in \cite{NguyenPosteriorContractionPopulation2015} (which applies since each component has $J$ topics and non-obtuse property \ref{def: non-obtuse corner} is assumed to hold for all associated component polytopes) states that $W_1(G_i, G_i') \leq C_{\delta} d_{\cH}(\cS_i, \cS_i')$ for any two component measures $G_i, G_i'$ supported on polytopes $\cS_i, \cS_i'$, where $C_{\delta}>0$ is an absolute constant depending on $\delta$ as in \ref{def: non-obtuse corner}. Together with Lemma \ref{lemma: kl_upper_W}, we have for any permutation $\sigma$
\begin{align*}
    \KL(p_{n,\omega}\Vert p_{n,\omega'}) &\leq \frac{n}{c_0}\sum_i \pi_i W_1(G_i, G_{\sigma(i)}') + \frac{1}{c_1}\sum_i \pi_i |\pi_i - \pi_{\sigma(i)}'| \\
    &\leq \frac{nC_{\delta}}{c_0}\sum_i d_{\cH}(\cS_i, \cS_i') + \frac{1}{c_1}\sum_i |\pi_i - \pi_{\sigma(i)}'| \\
    &\leq \left(\frac{nC_{\delta}}{c_0} \vee \frac{1}{c_1}\right)\sum_i \left(d_{\cH}(\cS_i, \cS_{\sigma(i)}') + |\pi_i - \pi_{\sigma(i)}'|\right)
\end{align*}
Since the above holds for any permutation, it holds true for $\sigma^*$, the minimizer of the above sum, which gives the desired result because of the definition of $d_{\bbH+}$.
\end{proof}

\subsection{Prior Concentration Property}

We recall the definition of the KL-ball $B_K$ to be used for stating the prior concentration property. Let $\cP$ denote the space of probability densities on $\Delta^{V-1}$. For a density $p_0\in \cP$ and radius $\delta>0$, define
$$B_K(p_0, \delta) = \left\{p\in\cP : \KL(p_0\Vert p)\leq \delta^2, K_2(p_0\Vert p)\leq \delta^2\right\}$$
where 
$$K_2(p\Vert q) = P\left(\log \frac{p}{q} - \KL(p\Vert q)\right)^2 = \int \left(\log \frac{p(x)}{q(x)}\right)^2 P(dx) - \KL(p\Vert q)^2$$
is the KL-variation. We verify that under appropriate conditions, the prior puts enough mass on all sufficiently small KL-balls centered at the truth (this condition is typically referred to as the KL-property of the prior).
\begin{proposition}\label{prop: KL property prior}
    Suppose $\omega^*=(\rho^*,\pi^*)\in \Omega(\cT)$, where $\cT\in \mathfrak{T}(I,J)$. $\Pi$ is a prior distribution on $\Omega(\cT)$ satisfying conditions (1), (2), (4) from Theorem \ref{thm: density contraction rate}. Then, for all sufficiently small $\delta>0$,
    \begin{equation}
        \Pi\left(p_{n,\omega} \in B_{K}(p_{n,\omega^*, \delta}\right) \geq c\left(\frac{\delta^2}{n[n\vee \log (1/\delta)]^2}\right)^{K(V-1)+I-1}
    \end{equation}
    where the constant $c$ depends on $c_0, c_1, c_2, c_3$.
\end{proposition}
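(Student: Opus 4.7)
The plan is to carve out a small Euclidean neighborhood of the truth in $\Omega(\cT)$, control its prior mass directly via the small-ball assumption, and then verify that every point in it lies in $B_K(p_{n,\omega^*},\delta)$. Concretely, for $\epsilon > 0$ to be chosen set
\[
\cN_\epsilon := \{\omega=(\rho,\pi) : \norm{\rho(u)-\rho^*(u)}\leq \epsilon \text{ for all } u\in\cV,\ |\pi(\varphi)-\pi^*(\varphi)|\leq \epsilon \text{ for all } \varphi\in \Phi(\cT)\}.
\]
By the independence clause in condition (3) of Theorem~\ref{thm: density contraction rate} and the two small-ball lower bounds, $\Pi(\cN_\epsilon)\geq c_2 c_3\, \epsilon^{K(V-1)+I-1}$. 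It then suffices to pick $\epsilon=\epsilon(\delta,n)$ so that every $\omega\in\cN_\epsilon$ simultaneously satisfies $\KL(p_{n,\omega^*}\Vert p_{n,\omega})\leq \delta^2$ and $K_2(p_{n,\omega^*}\Vert p_{n,\omega})\leq \delta^2$.

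For the KL control, evaluating $d_{\bbH+}$ at the identity permutation and using the standard bound $d_\cH(\conv \rho^*(\varphi_i),\conv\rho(\varphi_i))\leq \max_{u\in\varphi_i}\norm{\rho^*(u)-\rho(u)}$ gives $d_{\bbH+}(\omega^*,\omega)\leq 2I\epsilon$, so Corollary~\ref{corollary: kl upper dH+} immediately delivers $\KL(p_{n,\omega^*}\Vert p_{n,\omega})\leq C_1 n\epsilon$ for a constant $C_1$ depending on $c_0,c_1,I$ and the non-obtuse corner constant.

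For the $K_2$ term I would establish a uniform pointwise log-ratio bound. Along each path $i$ and for every $\beta\in\Delta^{J-1}$, writing $\eta(\beta)=\Theta_i^\top\beta$ and $\eta^*(\beta)=(\Theta_i^*)^\top\beta$, one has $|\eta^*_v-\eta_v|\leq \epsilon$, and since $\eta_v\geq c_0$ uniformly it follows that $|\log\eta^*_v-\log\eta_v|\leq 2\epsilon/c_0$ for $\epsilon\leq c_0/2$. Integrating the resulting enclosure $\prod_j (\eta^*_{x_j}/\eta_{x_j})\in[e^{-2n\epsilon/c_0},e^{2n\epsilon/c_0}]$ against the Dirichlet law of $\beta$ transfers the same enclosure to $p_{\text{LDA}}(\bx\mid\Theta_i^*,\alpha_0)/p_{\text{LDA}}(\bx\mid\Theta_i,\alpha_0)$. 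Combining the $I$ per-path estimates with the path-probability perturbation $|\pi_i^*-\pi_i|\leq\epsilon$, the lower bound $\pi_i\geq c_1$, and the trivial inequality $p_{n,\omega}(\bx)\geq c_1\sum_i p_{\text{LDA}}(\bx\mid\Theta_i,\alpha_0)$, a routine mixture-ratio calculation yields $\norm{\log(p_{n,\omega^*}/p_{n,\omega})}_\infty\leq C_2 n\epsilon$ for a constant $C_2$ depending on $c_0,c_1,I$, whenever $n\epsilon$ is small enough for the linearization to apply. Hence $K_2(p_{n,\omega^*}\Vert p_{n,\omega})\leq \bbE_{p_{n,\omega^*}}[\log(p_{n,\omega^*}/p_{n,\omega})]^2\leq C_2^2 n^2\epsilon^2$.

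With the KL bound $\leq C_1 n\epsilon$ and the $K_2$ bound $\leq C_2^2 n^2\epsilon^2$, both requirements $\leq\delta^2$ are met by choosing $\epsilon = c\delta^2/(n[n\vee \log(1/\delta)]^2)$ for a sufficiently small constant $c>0$; the extra $[n\vee\log(1/\delta)]^2$ in the denominator is a safety cushion that keeps $n\epsilon\to 0$ as $\delta\to 0$ (so that the linearization is valid) while still dominating the tighter $K_2$ requirement. Substituting this $\epsilon$ into $\Pi(\cN_\epsilon)\geq c_2 c_3\, \epsilon^{K(V-1)+I-1}$ gives the claimed lower bound. The main obstacle is the $K_2$ step: Corollary~\ref{corollary: kl upper dH+} only controls KL, and merely bounding $|\log(p_{n,\omega^*}/p_{n,\omega})|$ by the crude pointwise range $O(n)$ would produce $K_2 = O(n^2)$, which is useless; the argument hinges on the mixture-level linearization above, which turns the component-wise $O(n\epsilon)$ into a genuinely small sup-norm bound.
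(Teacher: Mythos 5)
Your proof is correct and takes a genuinely different route from the paper's. Both proofs start the same way: define the Euclidean neighborhood $\cN_\epsilon$ of the truth, apply the independence clause and the two small-ball bounds to get $\Pi(\cN_\epsilon)\gtrsim \epsilon^{K(V-1)+I-1}$, and then calibrate $\epsilon$ so that $\cN_\epsilon$ sits inside the KL-ball $B_K(p_{n,\omega^*},\delta)$. The divergence is in how the $K_2$ term is controlled. The paper bounds the Hellinger distance by $h^2\lesssim n\epsilon$ via the Wasserstein coupling and Lemma~\ref{lemma: kl_upper_W}, shows the density ratio is bounded, $\int p_{n,\omega^*}^2/p_{n,\omega}\leq 1/(c_1 c_0^n)$, and then invokes the Wong--Shen (1995, Theorem 5) inequality $K_2=\cO\bigl(h^2[\log(M/h)]^2\bigr)$, which forces the case split $n\lessgtr\log(1/\delta)$ and is the origin of the $[n\vee\log(1/\delta)]^2$ factor in the exponent. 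You instead establish a direct sup-norm bound $\norm{\log(p_{n,\omega^*}/p_{n,\omega})}_\infty\leq C_2 n\epsilon$ by sandwiching the integrands pointwise in $\beta$ --- using $\eta_v\geq c_0$, $|\eta^*_v-\eta_v|\leq\epsilon$, and then passing through the Dirichlet integral and the mixture weights bounded below by $c_1$ --- and deduce $K_2\leq\norm{\log(p_{n,\omega^*}/p_{n,\omega})}_\infty^2\leq (C_2 n\epsilon)^2$ immediately, without Wong--Shen. This is more elementary and in fact yields a slightly stronger conclusion (an $\epsilon\asymp\delta^2/n^2$ already suffices, making the $[n\vee\log(1/\delta)]^2$ factor in the stated bound harmless slack rather than necessary). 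Your stated rationale for the extra $[n\vee\log(1/\delta)]^2$ factor --- that it is needed so $n\epsilon\to 0$ for the ``linearization'' to hold --- is not quite the right explanation (there is no linearization, the sandwich is exact, and $n\epsilon\to 0$ already with $\epsilon=\delta^2/n^2$); the factor is simply there to conform to the looser bound the paper states, and choosing a smaller $\epsilon$ is always permissible. Aside from that minor mischaracterization, the argument is sound.
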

\begin{proof}
    Denoting $G_i$ to be the component measures supported on component polytope $\cS_i$ and $\pi_i$ as the corresponding path probability (and similarly $G_i^*, \cS^*, \pi^*$). Consider the set
    $$\cE_{\epsilon} = \left\{\norm{\rho(u) - \rho^*(u)}\leq \epsilon, \forall u\in \cV, |\pi_i - \pi^*_i|\leq \epsilon, \forall i\in[I]\right\}.$$
    Under the prior (by condition 3), we have
    \begin{equation}\label{eq: prior mass}
        \Pi(\cE_{\epsilon}) \geq \tilde{c}\epsilon^{K(V-1)+I-1}
    \end{equation}
    where $\tilde{c}$ is a constant depending on $c_2, c_3$. Now, we argue that for all $\omega\in \cE_{\epsilon}$, we have an upper bound (in terms of $\epsilon$) for $\KL(p_{n,\omega^*}\Vert p_{n,\omega})$ and $K_2(p_{n,\omega^*}\Vert p_{n,\omega})$. Take such a $\omega\in \cE_\epsilon$. First, for any $i$, construct the coupling $Q_i$ between $G_i^*$ and $G_i$ as the law of $(\eta_{i1}, \eta_{i2})$ where $\eta_{i1}=\sum_{j\in [J]} \beta_j \rho^*(\varphi_j^i)$ and $\eta_{i2}=\sum_{j\in[J]} \beta_j \rho(\varphi_j^i)$, where $\beta=(\beta_1,\dot,\beta_J)\sim \text{Dir}_J(\alpha_0)$. Since $\norm{\rho(u)-\rho^*(u)}\leq \epsilon$ for all nodes, it is clear that for this coupling, the expected $\norm{\eta_{i1}-\eta_{i2}}\leq \epsilon$, hence $W_1(G_i^*, G_i)\leq \epsilon$. Then, using $h^2 \leq K/2$ and Lemma \ref{lemma: kl_upper_W}, we have for any $\omega\in\cE_\epsilon$,
    \begin{align*}
        h^2(p_{n,\omega^*}, p_{n,\omega}) &\leq \frac{1}{2}\KL(p_{n,\omega^*}\Vert p_{n,\omega}) \\
        &\leq \frac{n}{2c_0}\sum_i \pi_i^*W_1(G_i^*, G_i) + \frac{1}{2c_1}\sum_i \pi_i^*|\pi_i^* - \pi_i| \\
        &\leq \epsilon\left(\frac{n}{2c_0} + \frac{1}{2c_1}\right).
    \end{align*}
    Now, let $p_{n,G_i}(\bX)=\int_{\cS_i} p(\bX|\eta)G_i(d\eta)$ be the conditional density of a document, given that it is generated from the $i$th component, thus $p_{n,\omega}(\bX)=\sum_i \pi_i p_{n,G_i}(\bX)$. For every $i$, the conditional density ratio $p_{n,G_i^*}/p_{n,G_i}\leq 1/c_0^n$ and hence,
    $$\frac{p_{n,\omega^*}}{p_{n,\omega}} = \frac{\sum_i \pi_i^* p_{n,G_i^*}}{\sum_i \pi_i p_{n,G_i}} < \sum_i \frac{\pi_i^*}{\pi_i}\frac{p_{n,G_i^*}}{p_{n,G_i}}\leq \frac{1}{c_1 c_0^n},$$
    where the inequality is trivial given all terms are positive (by expanding the terms). This implies that 
    $$\sum_{\bX \in [V]^n} p_{n,\omega^*}(\bX)^2 / p_{n,\omega}(\bX) \leq 1/c_1c_0^n,$$ where the sum is taken over all possible realizations of documents in $[V]^n$.

    Now, we invoke a bound from \cite{wong1995probability} Theorem 5 on the KL divergence. According to this result, if $\int p^2/q < M$ for two densities $p,q$, then for some universal constant $\epsilon_0>0$, as long as $h(p,q)\leq \epsilon<\epsilon_0$, we have $\KL(p\Vert q)=\cO(\epsilon^2\log (M/\epsilon))$ and $K_2(p\Vert q) =\cO(\epsilon^2[\log (M/\epsilon)]^2)$., where the big $\cO$ constants are universal.

    In our case, we have shown that $\omega\in \cE_\epsilon\Rightarrow h^2(p_{n,\omega^*}, p_{n,\omega}) \leq \epsilon \left(\frac{n}{2c_0}+\frac{1}{2c_1}\right)$, and $\int p_{n,\omega^*}^2 / p_{n,\omega} \leq 1/c_1 c_0^n$. Hence, we have for $\omega\in \cE_{\epsilon}$
    $$K_2\left(p_{n,\omega^*}\Vert p_{n,\omega}\right) = \cO\left(\frac{\epsilon(nc_1 + c_0)}{2c_0c_1}\left[\log n\log \frac{1}{c_0} + \frac{1}{2}\log \frac{2c_0}{\epsilon(nc_1+c_0)c_1}\right]^2\right).$$
    Now, if we choose $\epsilon = \delta^2/n^3$, the term inside the big $\cO$ is of the order $\delta^2[n + \log n \log (1/\delta)]^2/n^2$, in which the dominating term is $\delta^2$ for all small $\delta$ if $n > \log (1/\delta)$. On the other hand, if we set $\epsilon = \delta^2/n(\log (1/\delta)^2$, then the term in the big $\cO$ is of the order $\delta^2[\log\log(1/\delta) + \log(1/\delta) + n)^2 / [\log(1/\delta)]^2$, which is dominated by $\delta^2$ if $n\leq \log 1/\delta$. Thus, in either case, depending on $n< \log(1/\delta)$ or otherwise, we have $K_2=\cO(\delta^2)$ (note that by the result, control on $K_2$ is enough for control on $\KL$). Hence, comined with Eq \ref{eq: prior mass} we obtain
    $$\Pi(p_{n,\omega}\in B_K(p_{n,\omega^*},\delta)) \geq c\left(\frac{\delta^2}{n\left[n \vee \log (1/\delta)\right]^2}\right)^{K(V-1)+ I - 1}$$
    for some constant $c$, which depends on $c_0,c_1,c_2,c_3$.
\end{proof}

\subsection{Model Entropy}
For a metric space $(\cX, d)$, the covering number $N(\epsilon, \cX, d)$ is defined as the minimal number of balls of radius $\epsilon$, i.e. sets of the form $\{x\in \cX: d(x,x_0)\leq \epsilon\}$ which cover $\cX$. In our case we are interested in $N(\epsilon, \{P_{n,\omega}|\omega\in \Omega(\cT)\}, h)$, i.e., the covering number for our model space in terms of the Hellinger metric. It captures the model space complexity and plays a fundamental role in constructing tests to differentiate two densities from this space. However, it is easier to bound $N(\epsilon, \Omega(\cT), d_{\bbH+})$, i.e. covering number for the parameter space in terms of a suitable metric (in our case the augmented tree-directed Hausdorff metric), since it allows us to use bounds on covering number for nicer Euclidean spaces like probability simplex. Result in Corollary \ref{corollary: kl upper dH+} allows us to translate these bounds to covering number for the density space.

\begin{proposition}\label{prop: model entropy}
    For $\tilde{\Omega} = \{\omega \in \Omega(\cT)\mid \text{associated polytopes satisfy Property \ref{def: non-obtuse corner}}\}$, we have
    $$\log N(\epsilon, \{P_{n,\omega}: \omega\in\tilde{\Omega}\}, h) \lesssim \left[K(V-1)+I-1\right] \log \left(\frac{C_n}{\epsilon^2}\right)$$
    where $C_n = \frac{n}{2c_0} \vee \frac{1}{2c_1}$ and $\lesssim$ hides additive constants in terms of $I,K,V$, free of $\epsilon$.
\end{proposition}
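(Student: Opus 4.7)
The plan is to reduce covering in the density space under the Hellinger metric to covering in the parameter space $\tilde{\Omega}$ under the metric $d_{\bbH+}$. First, I would combine Corollary \ref{corollary: kl upper dH+} with the standard bound $h^2(p,q) \leq \KL(p \Vert q)$ to obtain, for any $\omega, \omega' \in \tilde{\Omega}$,
\begin{equation*}
h(p_{n,\omega}, p_{n,\omega'}) \leq \sqrt{\KL(p_{n,\omega}\Vert p_{n,\omega'})} \leq \sqrt{C_n \cdot d_{\bbH+}(\omega, \omega')}.
\end{equation*}
Consequently, any $\delta$-net of $\tilde{\Omega}$ under $d_{\bbH+}$ induces an $\epsilon$-net in the density space under $h$ whenever $\delta = \epsilon^2/C_n$, so it suffices to bound $N(\delta, \tilde{\Omega}, d_{\bbH+})$.

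Next, I would construct an explicit $\delta$-net for $\tilde{\Omega}$ under $d_{\bbH+}$. Because all elements of $\tilde{\Omega}$ share the same DRT $\cT$, the path enumeration $\varphi_1,\ldots,\varphi_I$ is fixed and the minimum over $\sigma \in \bbS(I)$ in Definition \ref{def: augmented tree directed Hausdorff metric} is bounded by the value at the identity permutation. Using the elementary fact that the Hausdorff distance between convex hulls is at most the Hausdorff distance between the corresponding vertex sets, we have $d_{\cH}(\cS_i, \cS_i') \leq \max_{u \in \varphi_i}\norm{\rho(u) - \rho'(u)}$. Hence if $\norm{\rho(u)-\rho'(u)} \leq \delta/(2I)$ for every $u \in \cV$ and $\sum_i |\pi_i - \pi_i'| \leq \delta/2$, then $d_{\bbH+}(\omega, \omega') \leq \delta$.

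Third, I would cover the topic-map space, regarded as $K$ independent copies of $\Delta^{V-1}$, at per-coordinate resolution $\delta/(2I)$ in Euclidean norm, and cover $\Delta^{I-1}$ at resolution $\delta/2$ in $L_1$. Standard covering bounds on bounded $d$-dimensional convex sets give at most $(c/\delta)^{V-1}$ balls per topic and $(c/\delta)^{I-1}$ balls for the path-probability, for a constant $c$ depending on $I,K,V$. Taking products yields
\begin{equation*}
N(\delta, \tilde{\Omega}, d_{\bbH+}) \leq \left(\tfrac{c}{\delta}\right)^{K(V-1)} \cdot \left(\tfrac{c}{\delta}\right)^{I-1} = \left(\tfrac{c}{\delta}\right)^{K(V-1)+I-1}.
\end{equation*}
Finally, substituting $\delta = \epsilon^2/C_n$ and taking logarithms produces
\begin{equation*}
\log N(\epsilon, \{P_{n,\omega}:\omega\in\tilde{\Omega}\}, h) \;\lesssim\; [K(V-1)+I-1]\,\log\!\left(\tfrac{C_n}{\epsilon^2}\right),
\end{equation*}
where the $\lesssim$ absorbs additive constants involving $I,K,V$.

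The argument is mostly bookkeeping built on top of Corollary \ref{corollary: kl upper dH+}; no step poses a serious obstacle. The only subtlety worth double-checking is the absence of the permutation $\sigma$ in the estimate of $d_{\bbH+}$: this is legitimate precisely because $\tilde{\Omega} \subset \Omega(\cT)$ fixes a single DRT and thus a canonical path enumeration, which collapses the minimum in Definition \ref{def: augmented tree directed Hausdorff metric} to the identity matching and prevents any blow-up from combinatorial permutations of components.
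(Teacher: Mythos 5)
Your proposal is correct and follows essentially the same route as the paper's proof: both reduce the Hellinger-entropy bound to a covering of $\tilde{\Omega}$ under $d_{\bbH+}$ via Corollary \ref{corollary: kl upper dH+}, cover that parameter space by taking a product of $\delta$-nets of $(\Delta^{V-1})^K$ and $\Delta^{I-1}$ matched via the canonical (identity) path bijection on the fixed DRT, and substitute $\delta \asymp \epsilon^2/C_n$. The only cosmetic differences are that you pass through $h \leq \sqrt{\KL}$ rather than $h^2 \leq \tfrac12\KL$, and that you justify $d_{\cH}(\cS_i,\cS_i') \leq \max_{u}\norm{\rho(u)-\rho'(u)}$ by a direct convex-hull argument whereas the paper cites Lemma 1(a) of \cite{NguyenPosteriorContractionPopulation2015}, neither of which changes the exponent or the order of the constants.
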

\begin{proof}
    We argue in two steps as follows. 
    
    \textbf{Entropy for parameter space:} Suppose $\tilde{\pi}_1,\dots,\tilde{\pi}_L$ be an optimal $\delta-$cover of $\Delta^{I-1}$ with $L=N(\delta, \Delta^{I-1}, \norm{\cdot}_1)$ and $\tilde{\theta}_1,\dots,\tilde{\theta}_M$ be an optimal $\delta'-$cover of $\Delta^{V-1}$ with $M=N(\delta', \Delta^{V-1}, \norm{\cdot}_2)$. We argue that this allows us to construct a cover for $\Omega(\cT)$, and hence also for $\tilde{\Omega}\subset \Omega(\cT)$. Define
    $$\cC = \{(\theta_1,\dots,\theta_K, \pi) | \theta_k\in \{\tilde{\theta}_j\}_{j\in [M]}, \pi\in\{\tilde{\pi}_\ell\}_{\ell\in[L]}\}$$
    with $|\cC| = M^K L$. For each $\tilde{\omega}=(\theta_1,\dots,\theta_k,\pi)\in \cC$, define $\omega=(\rho,\pi)$ as $\rho(v_k)=\theta_k$ for all $k\in [K]$ and $\pi(\varphi_i)=\pi_i$ for all $i$ (fixing an enumeration $v_1,\dots,v_K$ of $\cV$ and $\varphi_1,\dots,\varphi_I$ of $\Phi(\cT)$). Thus, every $\tilde{\omega}\in \cC$ can be mapped to a unique $\omega\in \Omega(\cT)$, call this map $h$.

    Now, given an arbitrary $\omega\in \Omega(\cT)$, let $\theta_1,\dots,\theta_K$ be the topics with $\theta_k = \rho(v_k)$ and $\pi_i = \pi(\varphi_i)$. By construction, there is an element $\tilde{\omega}=(\tilde{\theta}_1,\dots,\tilde{\theta_K},\tilde{\pi})\in \cC$ such that $\norm{\theta_k - \tilde{\theta}_k}_2 \leq \delta'$ for all $k$ and $\norm{\pi - \tilde{\pi}}_1 \leq \delta$. Hence we have
    $$d_{\bbH+}(\omega, h\circ \tilde{\omega}) \leq \sum_i d_{\cH}(\cS_i, \cS_i') + |\pi_i - \pi_i'| < \sum_i \delta' + \delta = I\delta' + \delta$$
    where we used the fact that
    $$d_{\cH}(\cS, \cS') \leq \max_{\theta\in \text{extr}\cS}\min_{\theta'\in\text{extr}\cS'} \norm{\theta - \theta'}_2 \vee \max_{\theta'\in\text{extr}\cS'}\min_{\theta\in\text{extr}\cS}\norm{\theta-\theta'}_2$$
    which follows from Lemma 1(a) \cite{NguyenPosteriorContractionPopulation2015}. Hence, for every $\omega\in \Omega(\cT)$, there exists $\tilde{\omega}\in \cC$ such that $d_{\bbH+}(\omega, h\circ\tilde{\omega}) < I(\delta' + \delta)$. Thus, taking $\delta'=\epsilon/2I$ and $\delta=\epsilon/2$, we get
    $$N(\epsilon, \Omega(\cT), d_{\bbH+}) \leq N\left(\frac{\epsilon}{2}, \Delta^{I-1}, \norm{\cdot}_1\right) \times N\left(\frac{\epsilon}{2I}, \Delta^{V-1}, \norm{\cdot}_2\right)^K.$$
    From existing upper bounds on covering numbers and using the fact that $\norm{x}_2 \leq \norm{x}_1$ for all $x\in \bbR^V$, we have
    \begin{align*}
        N\left(\frac{\epsilon}{2}, \Delta^{I-1}, \norm{\cdot}_1\right) &\leq \left(\frac{10}{\epsilon}\right)^{I-1}\\
        N\left(\frac{\epsilon}{2I}, \Delta^{V-1}, \norm{\cdot}_2\right) &\leq N\left(\frac{\epsilon}{2I}, \Delta^{V-1}, \norm{\cdot}_1\right) \leq \left(\frac{10I}{\epsilon}\right)^{V-1}
    \end{align*}
    which together with the previous display gives
    $$N(\epsilon, \Omega(\cT), d_{\bbH+}) \leq C \left(\frac{1}{\epsilon}\right)^{K(V-1)+I-1}$$
    where $C=10^{I-1}\times (10I)^{K(V-1)}$ is a constant in terms of $\epsilon$. Thus, we have
    $$\log N(\epsilon, \tilde{\Omega}, d_{\bbH+}) \leq \log C + \left[K(V-1)+I-1\right]\log (1/\epsilon).$$

    \textbf{Cover for density space:} Let $\omega_1,\dots,\omega_H$ be an $\epsilon-$optimal cover for $\tilde{\Omega}$. We argue that $\{p_{n,\omega_j} | j \in [H]\}$ is a $\delta-$cover for $\{P_{n,\omega}|\omega\in \tilde{\Omega}\}$, for an appropriate $\delta$ depending on $\epsilon$. Indeed, thanks to Corollary \ref{corollary: kl upper dH+} and the fact that any $\omega\in \tilde{\Omega}$ satisfies non-obtuse corner property, for every such $\omega$, there is $\omega_h$ with $d_{\bbH+}(\omega,\omega_h)\leq \epsilon$ and hence
    $$h^2(p_{n,\omega}, p_{n,\omega_h}) \leq \frac{1}{2}\KL(p_{n,\omega}\Vert p_{n,\omega_h}) \leq \left(\frac{nC_\delta}{2c_0}\vee \frac{1}{c_1}\right)d_{\bbH+}(\omega,\omega_h) < C_n \epsilon.$$
    Thus, $\{p_{n,\omega_j} | j \in [H]\}$ is a $\sqrt{C_n \epsilon}-$cover, i.e. $\delta=\sqrt{C_n \epsilon}$, hence $\epsilon = \delta^2/C_n$. Thus,
    $$N(\epsilon, \{P_{n,\omega}|\omega\in \tilde{\Omega}\}, h) \leq N(\epsilon^2/C_n, \tilde{\Omega}, d_{\bbH+})$$
    which gives, using the covering number bound from the first part of the proof
    $$\log N(\epsilon, \{P_{n,\omega}|\omega\in \tilde{\Omega}\}, h) \leq \log C + \left[K(V-1)+I-1\right]\log (C_n / \epsilon^2).$$
\end{proof}

\subsection{Proof of Theorem \ref{thm: density contraction rate}}\label{app: proof of density rate}

We apply Theorem 8.9 from \cite{GhosalVaartFundamentalsNonparametricBayesian2017} with $\cP_{m,1}=\{p_{n,\omega}| \omega\in \tilde{\Omega}\}$, where $\tilde{\Omega}$ is defined in Proposition \ref{prop: model entropy} and $\cP_{m,2}=\emptyset$ and verify the conditions in the theorem. Trivially $\Pi(\cP_{m,2})=0$, so the last condition is satisfied. Also, we note that $m\epsilon^2_{m,n}\to \infty$ with $\epsilon_{m,n}=\sqrt{\log m / m}$ in part (1) and $\epsilon_{m,n}=\sqrt{\log (m\vee n)/m}$ in part (2) and moreover, $\epsilon_{m,n}\to 0$ in both cases.

\textbf{Case 1:} (fixed $n$ case) For condition 1, we use Proposition \ref{prop: KL property prior} to obtain (note that as $\epsilon_m=\sqrt{\log m/m}\to 0$, $n<\log (1/\epsilon_{m})$ eventually)
    \begin{align*}
        \log \Pi(p_{n,\omega}\in B_K(p_{n,\omega^*},\epsilon_{m})) &\gtrsim (K(V-1)+I-1)\log\left(\frac{\epsilon_{m}^2}{n \log (1/\epsilon_{m})}\right)\\
        &= -c'\left( 2\log(1/\epsilon_{m}) + \log \log (1/\epsilon_{m}) + \log{n}\right)\\
        &\gtrsim  -C' \log m = -C' m \epsilon_{m}^2
    \end{align*}
    for sufficiently large constant $C'$ depending on $K,V,I$, since $\log (1/\epsilon_m) = \frac{1}{2}(\log m - \log\log m)$ is dominated by $\log m$ as $m\to\infty$.

    For condition 2, we use Proposition \ref{prop: model entropy} along with a similar argument to obtain
    \begin{align*}
        \log N(\epsilon_{m}, \cP_{m,1}, h) &\lesssim c'\log \frac{\sqrt{C_n}}{\epsilon_{m}} \\
        &= c'\left(\log (1/\epsilon_{m}) + \tilde{C}\right)\\
        &\lesssim C'' m \epsilon_{m}^2.
    \end{align*}
    for some sufficiently large $C''$, following a similar argument as before. Finally, applying Theorem 8.9 from \cite{GhosalVaartFundamentalsNonparametricBayesian2017}, we have the desired result.

    \textbf{Case 2:} (Increasing $n$) We again use Proposition \ref{prop: KL property prior} noting that $n<\log(1/\epsilon_{m,n})$ iff $n<\frac{1}{2}(\log m - \log\log (m\lor n))$ if $n\lesssim \log m$, otherwise $n > \log (1/\epsilon_{m,n})$ eventually. In the first case, i.e. $n\lesssim \log m$,
    \begin{align*}
        \log \Pi(p_{n,\omega}\in B_K(p_{n,\omega^*},\epsilon_{m,n})) &\gtrsim (K(V-1)+I-1)\log\left(\frac{\epsilon_{m,n}^2}{n \log (1/\epsilon_{m,n})}\right)\\
        &= -c'\left( 2\log(1/\epsilon_{m,n}) + \log \log (1/\epsilon_{m,n}) + \log{n}\right)\\
        &\gtrsim  -C' \log (m\lor n) = -C' m \epsilon_{m,n}^2
    \end{align*}
    Since $\log(1/\epsilon_{m,n})=\frac{1}{2}(\log m - \log \log (m\lor n))$ and the dominant term in the parenthesis in the second line in the above display is either $\log m$, if $m>n$ or $\log n$, if $n>m$ - which controls the lower bound in the last line. 

    In the second case, 
    \begin{align*}
        \log \Pi(p_{n,\omega}\in B_K(p_{n,\omega^*},\epsilon_{m,n})) &\gtrsim (K(V-1)+I-1)\log\left(\frac{\epsilon_{m,n}^2}{n^3}\right)\\
        &= -c'\left( 2\log(1/\epsilon_{m,n}) + 3\log{n}\right)\\
        &\gtrsim  -C' \log (m\lor n) = -C' m \epsilon_{m,n}^2
    \end{align*}
    again following a similar argument. Thus, in both cases, condition 1 for applying Theorem 8.9 from \cite{GhosalVaartFundamentalsNonparametricBayesian2017} holds. For condition 2, we again use
    Equation \eqref{prop: model entropy} to obtain
    \begin{align*}
        \log N(\epsilon_{m,n}, \mathcal{P}_{m,1}, h) &\lesssim c'\log \frac{\sqrt{C_n}}{\epsilon_{m,n}} \\
        &= c'\left(\log (1/\epsilon_{m,n}) + \frac{1}{2}\log C_n\right)\\
        &\lesssim C'' m \epsilon_{m,n}^2.
    \end{align*}
    where again, the dominating term is either $\log m$ (coming from $\log (1/\epsilon_{m,n})$ or $\log n$ (coming from the second term) and they are both dominated by $\log (m\lor n)$. Thus the desired result follows by applying Theorem 8.9 from \cite{GhosalVaartFundamentalsNonparametricBayesian2017}.

\subsection{Proofs in Section \ref{sec:parameter estimation}}

\subsubsection{Additional Results about the union Hausdorff metric}
\begin{lemma}\label{lemma: union Hausdorff identifies polytopes}
    Given $\rho,\rho'\in\Omega(\cT)$, let $\cS_1,\dots,\cS_I$ (and $\cS_1',\dots,\cS_I'$) be the component topic polytopes for $\rho$ (resp. $\rho'$) where $I=|\Phi(\cT)|$. If  the following holds additionally for either one of $\rho,\rho'$:
    \begin{enumerate}[label={(A\arabic*')}]
        \item The component polytopes satisfy $\dim (\cA_i \cap \cA_{i'}) < \min \{\dim \cA_i, \dim \cA_{i'}\}$, where $\cA_i = \aff \cS_i$. \label{assume:diff_affine_space2}
    \end{enumerate}
    then, $d_{\cU\cH}(\rho,\rho')=0 \Rightarrow \cS_i = \cS_{\sigma(i)}'$ for all $i$, up to a permutation $\sigma$ of $[I]$. 
\end{lemma}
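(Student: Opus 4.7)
The plan is to first use the fact that both $\cup_{i \in [I]} \cS_i$ and $\cup_{j \in [I]} \cS_j'$ are closed (each is a finite union of compact polytopes), so that $d_{\cU\cH}(\rho,\rho')=0$ upgrades to the set equality $K := \cup_i \cS_i = \cup_j \cS_j'$. From there, I will match the pieces of $K$ in two stages: matching affine hulls first and then matching the polytopes themselves. Without loss of generality I will assume \ref{assume:diff_affine_space2} holds for $\rho$; a by-product of the first stage is that $\rho'$ inherits the same property.

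For the affine-hull matching, the key tool is the Baire category theorem. For each $i$, the relative interior $\intr_{\cA_i} \cS_i$ is a nonempty open subset of $\cA_i$ contained in $K \cap \cA_i = \cup_j (\cS_j' \cap \cA_i)$. By Lemma \ref{lemma: intersection affine space} each $\cS_j' \cap \cA_i$ is a compact polytope in $\cA_i$, so Baire category forces at least one of them to have full dimension $\dim \cA_i$, which in turn forces $\cA_j' \supseteq \cA_i$. Running the same argument with the roles of $\rho$ and $\rho'$ swapped, for each $j$ there is some $i'$ with $\cA_{i'} \supseteq \cA_j'$. Chaining $\cA_{i'} \supseteq \cA_j' \supseteq \cA_i$ and invoking \ref{assume:diff_affine_space2} on $\rho$, which forbids proper inclusions among the $\cA_i$'s since any such inclusion would make the intersection dimension equal the smaller affine hull's dimension, gives $i' = i$ and $\cA_j' = \cA_i$. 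A counting argument then promotes $i \mapsto j$ to a bijection $\sigma:[I] \to [I]$ with $\cA_{\sigma(i)}' = \cA_i$; since the $\cA_i$'s are pairwise distinct under \ref{assume:diff_affine_space2}, so are the $\cA_j'$'s, and the collection $\{\cA_j'\}$ therefore also satisfies \ref{assume:diff_affine_space2}.

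For the polytope equalities, fix $i$ and set $j = \sigma(i)$, so $\cA_j' = \cA_i$. For any $i' \neq i$, \ref{assume:diff_affine_space2} gives $\dim(\cA_i \cap \cA_{i'}) < \dim \cA_i$, so $\cS_{i'} \cap \cA_i$ is a closed set of strictly smaller dimension, hence nowhere dense in $\cA_i$. Consequently the relatively open set $\intr_{\cA_i} \cS_j' \subseteq K \cap \cA_i$ is covered by $\cS_i$ up to a nowhere-dense residue, so $\cS_i \cap \intr_{\cA_i} \cS_j'$ is dense in $\intr_{\cA_i} \cS_j'$; closedness of $\cS_i$ then yields $\cS_i \supseteq \overline{\intr_{\cA_i} \cS_j'} = \cS_j'$. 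The mirror-image argument, legitimate precisely because $\rho'$ was just shown to satisfy \ref{assume:diff_affine_space2}, yields the reverse inclusion $\cS_j' \supseteq \cS_i$, completing the proof.

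The main technical obstacle I anticipate is the bookkeeping around bijectivity of $\sigma$ and the transfer of \ref{assume:diff_affine_space2} from $\rho$ to $\rho'$. The reverse inclusion $\cS_j' \subseteq \cS_i$ in the last stage genuinely needs the dimension-drop property on the $\rho'$ side, so without a clean bijection argument in the first stage $\rho'$ could a priori have multiple components sharing the same affine hull that collectively reconstruct $\cS_i$, and this pathology must be ruled out via the chained Baire/inclusion argument above.
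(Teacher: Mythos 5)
Your proof is correct and follows essentially the same route as the paper's: identify the affine hulls $\{\cA_i\}$ from $\cS = \cup_i \cS_i$ using the fact that under (A1$'$) each affine hull carries exactly one full-dimensional piece, then recover each $\cS_i$ as that piece. Your version is considerably more careful where the paper is terse — the Baire-category step, the chaining argument giving bijectivity of $\sigma$, and especially the explicit transfer of (A1$'$) to $\rho'$ (needed for the reverse inclusion $\cS_{\sigma(i)}' \subseteq \cS_i$) all make rigorous what the paper asserts in passing.
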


We note that the condition \ref{assume:diff_affine_space2} is slightly stronger than Assumption \ref{assume:diff_affine_space} - Assumption \ref{assume:diff_affine_space} allows the possibility of $\cS_i \subset \cS_{i'}$ with $\dim \cS_i < \dim \cS_{i'}$, while such a case is not allowed under condition (2) of the above result. In the case that each path of $\cT$ has length $J$ and topics in a component are affinely independent, the first condition is true. On the other hand, even if the paths have different depths, it is possible that the component polytopes have the same dimension, thereby satisfying the first condition. In particular, assumption \ref{assume:diff_affine_space} along with the fact that component polytopes have same dimension imply condition \ref{assume:diff_affine_space2}.

\begin{proof}
    Let $\rho$ satisfy Assumption \ref{assume:diff_affine_space2} and $(I,\bJ,K)$ be the size of $\cT$. Since $d_{\cU\cH}(\rho,\rho')=0$, we know that $\cup_i \cS_i = \cup_i \cS_i'$. Let $\cS=\cup_i \cS_i$ and $\cA_i=\aff \cS_i$, then by assumption, we know that $\cA_1,\dots,\cA_I$ are affine spaces such that their pairwise intersections are lower dimensional. Furthermore, $\cA_i\cap \cS = \cup_j (\cA_i\cap \cS_j) = \cS_i \cup (\cup_{j\neq i} \cA_i \cap \cS_j)$, showing that the intersection of $\cS$ with any one of these affine spaces is the union of a convex polytope ($\cS_i$) and other convex polytopes ($\cA_i\cap\cS_i$), of dimension strictly smaller than $\cA_i$ (some or all of these could be empty). This shows that $\{\cA_1,\dots,\cA_I\}$ is a unique set of $I$ affine spaces such that $\cS\subset \cup_i \cA_i$, $\dim \cA_i\cap \cA_{i'} < \min \{\dim \cA_i, \dim \cA_{i'}\}$ and $\cA_i\cap \cS$ contains only one convex polytope of same dimension as $\cA_i$.

    Thus, since $\cS=\cS'$, also consisting of exactly $I$ component polytopes and there are precisely $I$ affine spaces (with pairwise trivial intersection), it shows that each component of $\cS'$ must be lying in precisely one of these affine spaces. As a result, $\cS_i'$ can be identified from $\cS'$.
\end{proof}

We state another result which connects $d_{\bbH}$ and $d_{\cU\cH}$, where we recollect $d_{\bbH}$ to be the tree-directed Hausdorff metric
$$d_{\bbH}(\rho,\rho') = \min_{\sigma\in \bbS_I} \sum_{i\in[I]} d_{\cH}(\cS_i, \cS_{\sigma(i)}').$$

\begin{lemma}\label{lemma: tree-directed and union equivalence}
    Suppose $\{\rho_n\}_{n\geq 0}$ is a sequence of topic maps in $\Omega(\cT)$. Then we have
    \begin{enumerate}
        \item For any $\rho_0\in \Omega(\cT)$
        $$d_{\bbH}(\rho_n,\rho_0) \to 0 \Rightarrow d_{\cU\cH}(\rho_n,\rho_0)\to 0.$$
        \item If $\rho_0$ satisfies Assumption \ref{assume:diff_affine_space2}, then
        $$d_{\cU\cH}(\rho_n,\rho_0) \to 0 \Rightarrow d_{\bbH}(\rho_n,\rho_0)\to 0.$$
    \end{enumerate}
\end{lemma}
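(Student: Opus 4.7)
\textit{Plan for Part 1.} The cleanest route is to establish the pointwise bound $d_{\cU\cH}(\rho_n,\rho_0)\leq d_{\bbH}(\rho_n,\rho_0)$, from which the first implication is immediate. Let $\sigma^*$ be the permutation realizing the minimum in the definition of $d_{\bbH}(\rho_n,\rho_0)$. For any $x\in\cup_i\cS_i^n$, there is $i_0$ with $x\in\cS_{i_0}^n$, so $\dist(x,\cup_j\cS_j^0)\leq \dist(x,\cS_{\sigma^*(i_0)}^0)\leq d_{\cH}(\cS_{i_0}^n,\cS_{\sigma^*(i_0)}^0)\leq d_{\bbH}(\rho_n,\rho_0)$; the symmetric inequality for $y\in\cup_i\cS_i^0$ is identical. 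Taking the supremum gives the claim.

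\textit{Plan for Part 2.} I will run a compactness-and-subsequence argument, reducing the problem to the already proven Lemma \ref{lemma: union Hausdorff identifies polytopes}. Suppose for contradiction that $d_{\cU\cH}(\rho_n,\rho_0)\to 0$ but $d_{\bbH}(\rho_n,\rho_0)\not\to 0$, so that along some subsequence (still indexed by $n$) one has $d_{\bbH}(\rho_n,\rho_0)\geq \epsilon>0$. Identifying $\cV=[K]$, each topic map $\rho_n$ is a point in the compact set $(\Delta^{V-1})^K$, so by passing to a further subsequence we may assume $\rho_n(u)\to \rho^*(u)$ for every $u\in\cV$, where $\rho^*\in(\Delta^{V-1})^K$ (we do not require $\rho^*$ to be injective; the notion of component polytope $\cS^*_i:=\conv(\rho^*(\varphi_i))$ still makes sense for each path $\varphi_i$ of $\cT$).

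\textit{Closing the argument.} Since $\cS^n_i$ and $\cS^*_i$ are convex hulls of finitely many points and those vertices converge, $d_{\cH}(\cS^n_i,\cS^*_i)\to 0$ (standard continuity of convex hulls under pointwise convergence of vertex sets), hence $d_{\bbH}(\rho_n,\rho^*)\leq \sum_i d_{\cH}(\cS^n_i,\cS^*_i)\to 0$. By Part 1 this also gives $d_{\cU\cH}(\rho_n,\rho^*)\to 0$, and the triangle inequality (for the Hausdorff metric on $\cup_i\cS_i$) then yields $d_{\cU\cH}(\rho^*,\rho_0)=0$. Because $\rho_0$ satisfies Assumption \ref{assume:diff_affine_space2}, Lemma \ref{lemma: union Hausdorff identifies polytopes} applies to the pair $(\rho^*,\rho_0)$ and produces a permutation $\sigma$ with $\cS^*_i=\cS^0_{\sigma(i)}$ for all $i$; hence $d_{\bbH}(\rho^*,\rho_0)=0$. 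Using the triangle inequality for $d_{\bbH}$ (which is a pseudo-metric by the same argument as Lemma \ref{lemma: dh+ triangle}), $d_{\bbH}(\rho_n,\rho_0)\leq d_{\bbH}(\rho_n,\rho^*)+d_{\bbH}(\rho^*,\rho_0)\to 0$, contradicting $d_{\bbH}(\rho_n,\rho_0)\geq\epsilon$.

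\textit{Main obstacle.} Part 1 is routine; the substance of Part 2 lies in showing that the union $\cup_i\cS_i$ carries enough information to individually identify each $\cS_i$ in a limit sense. This is exactly the content of Lemma \ref{lemma: union Hausdorff identifies polytopes}, which forces us to rely on Assumption \ref{assume:diff_affine_space2} for $\rho_0$; the compactness step is needed precisely because the $\cU\cH$-distance is not strong enough on its own to track the individual components along the sequence, only in the limit. A minor subtlety worth flagging in the write-up is that the limit $\rho^*$ may fail to be injective, which is harmless since Lemma \ref{lemma: union Hausdorff identifies polytopes} only demands the structural assumption on $\rho_0$.
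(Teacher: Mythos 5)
Your proof is correct, and the overall scaffolding of Part 2 mirrors the paper's own argument; I will focus on where you differ.

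For Part 1 you establish the pointwise inequality $d_{\cU\cH}(\rho_n,\rho_0)\leq d_{\bbH}(\rho_n,\rho_0)$, which is a genuine simplification over the paper. The paper routes Part 1 through the same compactness-and-subsequence machinery it uses for Part 2: pass to a pointwise limit $\rho^*$, show $d_{\bbH}(\rho^*,\rho_0)=0$ gives $\cup_i\cS_i^*=\cup_i\cS_i^0$, hence $d_{\cU\cH}(\rho^*,\rho_0)=0$, then transfer back along the subsequence. That argument proves convergence only along a subsequence and implicitly needs a subsubsequence step; your direct inequality is quantitative, avoids compactness entirely, and gives the whole-sequence conclusion at once. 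It also makes it transparent why no assumption on $\rho_0$ is needed for this direction. Your observation that $d_{\bbH}$ enjoys the triangle inequality by the same bookkeeping as Lemma \ref{lemma: dh+ triangle} is correct and worth keeping, since the paper uses the triangle inequality for both pseudo-metrics without comment.

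For Part 2 your contradiction setup, the compactness extraction of a limit $\rho^*$, the continuity of $\theta\mapsto\conv\{\theta_1,\dots,\theta_J\}$ in Hausdorff distance, the resulting $d_{\bbH}(\rho_n,\rho^*)\to 0$ (via the identity permutation), and the appeal to Lemma \ref{lemma: union Hausdorff identifies polytopes} to disentangle the components, are all the same moves the paper makes, phrased more carefully. One point that deserves a second look is the claim that non-injectivity of the limit $\rho^*$ is ``harmless.'' Lemma \ref{lemma: union Hausdorff identifies polytopes} is stated for $\rho,\rho'\in\Omega(\cT)$, i.e.\ for injective topic maps, and its proof leans on the fact that $\cup_i\cS_i'$ decomposes into exactly $I$ full-dimensional polytopes matching the depth profile of $\cT$. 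If $\rho^*$ collapses two topics along the same path, the corresponding $\cS_i^*$ degenerates and that bookkeeping is not automatic. You are right that this is not a gap of your own making: the paper's own proof constructs exactly the same limit $\rho^*\in(\Delta^{V-1})^K$ without verifying injectivity, and then invokes the same lemma. A short resolution (which both write-ups would benefit from) is to note that once $\cup_i\cS_i^*=\cup_i\cS_i^0$ with $\rho_0$ satisfying (A1'), the affine-hull identification forces each $\cA_i^*:=\aff\cS_i^*$ to coincide with some $\cA_j^0$, and then $\cS_i^*\subset\cS_j^0$; since every $\cS_j^0$ must in turn be covered by the $\cS_i^*$ lying in its affine hull and these are closed convex sets each contained in it, one gets equality, which in particular forces $\rho^*$ to be injective along each path and hence a valid topic map. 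Flagging the issue as you do is adequate, but supplying this sentence closes it.
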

The proof uses the fact that $\Omega(\cT)$ is compact and hence, any sequence $\rho_n$ has a convergent subsequence. We note that the above lemma states that if only $\rho_0$ satisfies assumption \ref{assume:diff_affine_space2}, then $d_{\cU\cH}$ and $d_{\bbH}$ are equivalent in terms of local convergence. We also remark that due to the fact that $d_{\bbH}$ is precisely $d_{\bbH+}$ for a fixed DRT $\cT$ and uniform path probabilities, we have $d_{\bbH}(\rho,\rho')=0$ iff there exists a unique automorphism (i.e., isomorphism between $\cT$ and $\cT$) $\sigma$ on $\cT$ such that $\rho(u) = \rho'(\sigma(u))$ for all vertex $u$ of $\cT$. It is important to note that the last two lemmas use the fact that associated topic maps are defined on the same tree, so that the tree size is fixed.  

\begin{proof}
Firstly, for each $n$, $\rho_n:\cV\to\Delta^{V-1}$ where $\cV$ is the set of vertices of the underlying DRT (with finite number of vertices). Enumerating these vertices as $v_1,\dots,v_K$, we can think of each $\rho_n$ as $\theta^{(n)}=(\theta_1^n,\dots,\theta_K^n)\in\left(\Delta^{V-1}\right)^K$ where $\theta_k^n=\rho_n(v_k)$. Since $\left(\Delta^{V-1}\right)^K$ is compact, any such sequence $\theta^{(n)}$ has a convergent subsequence. If we look along that subsequence, say $\theta^{*}$ is the limit point, then we can define $\rho^*:\cV\to\Delta^{V-1}$ as $\rho^*(v_k)=\theta^*_k$. It is easy to check that $d(\rho_n,\rho_0)\to 0$ as $n\to\infty$, implies that $d(\rho^*,\rho_0)=0$ where $d$ can be both $d_{\mathcal{UH}}$ and $d_{\mathbb{H}}$. Thus, we need to relate 
$d_{\mathcal{UH}}(\rho^*,\rho_0)=0$ and $d_{\mathbb{H}}(\rho^*,\rho_0)=0.$

\textit{Part (1)}
Suppose $d_{\bbH}(\rho^*,\rho_0)=0$. Using an enumeration $\varphi^i$ of the maximal paths of $\cT$, this means, by definition of $d_{\bbH}$ that there exists a permutation $\sigma$ of $[I]$ such that $\cS_i^* = \cS_{\sigma(i)}^0$ (here $\cS_i^*$ is the topic polytope corresponding to path $\varphi^i$ using topic map $\rho^*$ and similarly $\cS_i^0$ for $\rho_0$). This trivially implies that $\cup_i \cS_i^* = \cup_i \cS_i^0$, which, by definition of $d_{\cU\cH}$ implies that $d_{\cU\cH}(\rho^*,\rho_0)=0$.

\textit{Part (2)}
Conversely, suppose $d_{\mathcal{UH}}(\rho^*,\rho_0)=0$. Thus, we obtain that $\cup_i \cS_i^0 = \cup_i \cS_i^*$, where we know that $\cS_i^0$ satisfies the conditions in the statement. Hence, from $\cup_i \cS_i^0$, each of the component polytopes can be identified uniquely from Lemma \ref{lemma: union Hausdorff identifies polytopes}. Moreover, by assumption \ref{assume:all_exposed}, each $\cS_i^0$ uniquely identifies the associated topics and $\dim \cS_i^0 = J_i-1$, where $J_i$ is the length of path $\varphi_i$ of $\cT$ associated with this component polytope.

Note that there are exactly $I$ many components in $\cup_i \cS_i^*$ as well and both $\{\cS_i^0\}_i$ and $\{\cS_i^*\}_i$ come from the same underlying tree $\cT$ (hence for each $\varphi\in\Phi(\cT)$ of depth $J_i$, there must be a component polytope with exactly $J_i$ extremal points), there must be a one-to-one correspondence between them. This implies that $d_{\mathbb{H}}(\rho^*,\rho_0)=0$.
\end{proof}

\subsection{Proofs in Section \ref{sec:parameter estimation}}

\subsubsection{Proof of Lemma \ref{lemma:lb_mass_small_ball}}

Since all topics $\theta\in\Delta^{V-1}$, $\norm{\theta}\leq 1$. First consider the case where all topics are affinely independent, hence $\dim\cS=K-1$. Suppose $\eta=\sum_{i=1}^{K} \beta_i^* \theta_i$, for some $\beta^*\in \Delta^{K-1}$. Consider the set $A_1(\epsilon)=\{\beta\in\Delta^{K-1}: |\beta_i - \beta_i^*| <\epsilon/K, i=1,\dots,K-1\}$. For any $\beta\in A_1(\epsilon)$, and $\eta'=\sum_k \beta_k\theta_k$, we have $\norm{\eta-\eta'}=\norm{\sum_k (\beta_k-\beta^*_k)\theta_k} \leq 2\sum_{k\in[K-1]} |\beta_k-\beta_k^*|\leq 2\epsilon$. WLOG assume $\beta_K^*\geq 1/K$. Then, for all $\epsilon<1/K$ we have
\begin{align*}
    G(B(\eta,2\epsilon)\cap\cS) &\geq G(A_1(\epsilon)) \\
    &= \frac{\Gamma(K\alpha)}{\Gamma(\alpha)^K} \int_{A_1(\epsilon)} \prod_{k=1}^{K-1}\beta_k^{\alpha-1}\left(1- \sum_{k=1}^K\beta_k\right)^{\alpha-1} d\beta \\
    &\geq \frac{\Gamma(K\alpha)}{\Gamma(\alpha)^K} \int_{\substack{A_1(\epsilon)\\\beta_k\geq \delta\,\forall k}} \prod_{k=1}^{K-1}\beta_k^{\alpha-1}\left(1- \sum_{k=1}^K\beta_k\right)^{\alpha-1} d\beta \\
    &\geq \min\{1, \delta^{\alpha-1}\} \frac{\Gamma(K\alpha)}{\Gamma(\alpha)^K} \left(\prod_{i\leq K-1} \int_{\max\{\beta_i^*-\epsilon/K, \delta\}}^{\min\{\beta^*+\epsilon/K, 1\}} \beta_i^{\alpha-1}d\beta_i\right)\\
    &\geq \min\{1, \delta^{\alpha-1}\} \frac{\Gamma(K\alpha)}{\Gamma(\alpha)^K} \prod_{i\leq K-1} \min\{1,\delta^{\alpha-1}\} 2\epsilon/K \\
    &=  \frac{2\Gamma(K\alpha)}{K^{K-1}\Gamma(\alpha)^K} \min\{1, \delta^{\alpha-1}\}^{K} \epsilon^{K-1} \\
    &= \begin{cases}
        C(\alpha,K)\epsilon^{K-1} &\alpha\leq 1 \\
        C(\alpha,K) \epsilon^{K\alpha-1} &\alpha>1
    \end{cases}
\end{align*}
where we used for $\delta<x<1$, $x^{\alpha-1}\geq \delta^{\alpha-1}$ for $\alpha>1$ and $x^{\alpha-1}\geq 1$ for $\alpha\leq 1$, thus $x^{\alpha-1}\geq \min\{1, \delta^{\alpha-1}\}$ covering both cases. The last line follows from this and the choice of $\delta=\epsilon/3K$, where $C(\alpha,K)$ is a constant. Moreover, if $\alpha\geq \alpha_1$, $C(\alpha,K)\geq C_0(\alpha_1,K)$ only depending on $\alpha$ (since $\Gamma(K\alpha)/\Gamma(\alpha)^K$ is increasing) and moreover, if $\alpha\leq \alpha_2$, then if $\alpha_2\leq 1$, then $\epsilon^K$ works while if $\alpha_2>1$, then $\epsilon^{K\alpha_2-1}$ works as a uniform lower bound.

Now, if $\dim \cS=p<K-1$, then for $\eta\in \cS$, we can write $\eta$ as the convex combination of some $p+1$ topics out of the extreme points of $\cS$, without loss of generality, suppose $\eta=\sum_{i=1}^{p+1} \beta_i^* \theta_i$, for some $\beta^*\in \Delta^{p}$. Let $\beta_j^*=0$ for $j=p+2,\dots,K$. Consider the set $A_1(\epsilon)=\{\beta\in\Delta^{K-1}: |\beta_i - \beta_i^*| <\epsilon/K, i=1,\dots,K-1\}$. For any $\beta\in A_1(\epsilon)$, and $\eta'=\sum_k \beta_k\theta_k$, we have $\norm{\eta-\eta'}=\norm{\sum_k (\beta_k-\beta^*_k)\theta_k} \leq 2\sum_{k\in[K-1]} |\beta_k-\beta_k^*|\leq 2\epsilon$. WLOG assume $\beta_{p+1}^*\geq 1/(p+1)$. Then, for all $\epsilon<1/(p+1)$, we have
\begin{align*}
    G(B(\eta,2\epsilon)\cap\cS) &\geq G(A_1(\epsilon)) \\
    &= \frac{\Gamma(K\alpha)}{\Gamma(\alpha)^K} \int_{A_1(\epsilon)} \prod_{k=1}^{K-1}\beta_k^{\alpha-1}\left(1- \sum_{k=1}^K\beta_k\right)^{\alpha-1} d\beta \\
    &\geq \delta^{1\vee\alpha -1} \frac{\Gamma(K\alpha)}{\Gamma(\alpha)^K} \left(\prod_{i\leq p} \int_{\max\{\beta_i^*-\epsilon/K, \delta\}}^{\min\{\beta^*+\epsilon/K, 1\}} \beta_i^{\alpha-1}d\beta_i\right)\left(\prod_{i=p+2}^{K} \int_{\delta}^{\epsilon/K} \beta_i^{\alpha-1}d\beta_i\right)\\
    &\geq \frac{\Gamma(K\alpha)}{\Gamma(\alpha)^K} \left(\delta^{1\vee\alpha-1}\right)^{p+1} \left( \frac{\epsilon}{K}\right)^{p} \frac{\left[\left(\frac{\epsilon}{K}\right)^{\alpha} - \delta^\alpha\right]^{K-p-1}}{\prod_{i=p+2}^{K} \alpha}\\
    &= \frac{\Gamma(K\alpha)}{\Gamma(\alpha)^K} \left(\delta^{1\vee\alpha-1}\right)^{p+1} \left( \frac{\epsilon}{K}\right)^{p} \frac{\left[\left(\frac{\epsilon}{K}\right)^{\alpha} - \delta^\alpha\right]^{K-p-1}}{\alpha^{K-p-1}}\\
    &= \frac{\Gamma(K\alpha)}{\Gamma(\alpha)^K} \left(\frac{\epsilon}{3K}^{1\vee\alpha-1}\right)^{p+1} \left( \frac{\epsilon}{K}\right)^{p} \frac{\left[\left(\frac{\epsilon}{K}\right)^{\alpha} - \left(\frac{\epsilon}{3K}\right)^\alpha\right]^{K-p-1}}{\alpha^{K-p-1}}\\
    &\geq C(\alpha,K,p) \epsilon^{(p+1)(1\vee\alpha-1)} \epsilon^{p} \epsilon^{\alpha(K-p-1)} \\
    &=\begin{cases}
        C(\alpha,K,p) \epsilon^{p + \alpha(K-p-1)} &\alpha \leq  1 \\
        C(\alpha,K,p) \epsilon^{\alpha K-1} &\alpha>1
    \end{cases}
\end{align*}
using $\delta=\epsilon/3K$ as before.

\subsection{Proof of Theorem \ref{thm: inverse bound}}
\begin{proof}
    Given a document $\doc=(X_1,\dots,X_n)$, define $\hat{\eta}\in\Delta^{V-1}$, such that $\hat{\eta}_v=\sum_{j\in[n]}1(X_j=v)/n$, to be the observed document mean. By definition of total variation,
    $$d_{\TV}(p_{n,\omega},p_{n,\omega_0})= \sup_{A} \left|P_{n,\omega}(A) - P_{n,\omega'}(A)\right|$$
    where the supremum is taken over all measurable subsets of $\Delta^{V-1}$. Since the conditional distribution of $\doc$ given $\eta$ is a product of i.i.d. multinomials, we have by Hoeffding's inequality
    $$P_{\doc|\eta}\left(\max_{v\in[V]} |\hat{\eta}_v - \eta_v| \geq \epsilon\right)\leq 2\exp\left(-2n\epsilon^2\right)$$
    for every $\epsilon>0$. Now, $\eta\sim G=\sum_k \pi_k G_k$, where $G_k$ are the component latent measures. Treating $\eta\sim G$ as random, the above inequality holds with probability 1, hence
    \begin{align*}
        P_{\eta\times \doc|\omega}\left(\norm{\hat{\eta}-\eta}\geq \epsilon\right) &\leq P_{\eta\times \doc|\omega}\left(\max_{v\in[V]} |\hat{\eta}_v - \eta_v|\geq \frac{\epsilon}{\sqrt{V}}\right) \\
        &\leq 2V\exp\left(\frac{-2n\epsilon^2}{V}\right).
    \end{align*}
    The same bound also holds for $P_{\eta\times \doc|\omega'}$, where $P_{\eta\times \doc|\omega}$ is the joint distribution of $(\eta,\doc)$ for parameter $\omega$, under $\eta\sim G(\omega), \doc|\eta \sim \otimes^n \text{Mult}(\eta)$. Now, let us define $B=\{\norm{\hat{\eta}-\eta}<\epsilon\}$. For any measurable $A\subset\Delta^{V-1}$,
    \begin{align}
        &\left|P_{n,\omega}(\hat{\eta}\in A) - P_{n,\omega'}(\hat{\eta}\in A)\right| \nonumber\\
        &= \left|P_{\eta\times \doc|\omega}(\hat{\eta}\in A; B) + P_{\eta\times \doc|\omega}(\hat{\eta}\in A; B^c) - P_{\eta\times \doc|\omega'}(\hat{\eta}\in A; B) - P_{\eta\times \doc|\omega'}(\hat{\eta}\in A; B^c)  \right| \\
        &\geq \left|P_{\eta\times \doc|\omega}(\hat{\eta}\in A; B) - P_{\eta\times \doc|\omega'}(\hat{\eta}\in A; B) \right| - 4V\exp\left(-\frac{2n\epsilon^2}{V}\right).\label{eq:proof_inv_bound_1}
    \end{align}
    We now show that there exists $A$ for which the first term in the above display is bounded from below. Let $\cS=\cup_i \cS_i$ and $\cS'=\cup_i \cS_i'$ be the union of the component polytopes arising from the two parameters $\omega, \omega'$ respectively. Let $\epsilon_1= d_{\cH}(\cS,\cS')/4$. We prove that for with $\epsilon=\epsilon_1$, one of the following holds:
    \begin{enumerate}[label=(\roman*)]
        \item $\exists A^*\subset \cS\setminus \cS'$ such that $A^*_{\epsilon}\cap \cS_{\epsilon}'=\emptyset$ and $G(A^*)\geq c\epsilon^{(1\vee \alpha)J-1}$ \\
        \item $\exists A^*\subset \cS'\setminus \cS$ such that $A^*_{\epsilon}\cap \cS_{\epsilon}=\emptyset$ and $G'(A^*)\geq c\epsilon^{(1\vee \alpha)J-1}$. 
    \end{enumerate}
    where $A_\epsilon=A + B(0,\epsilon) = \{y:d(y,A)<\epsilon\}$. We first complete the proof assuming this is correct and then prove that it indeed is true.
    
    \textbf{Assuming that (i) above is true: } Firstly, $P_{\eta\times \doc|\omega'}(\hat{\eta}\in A^*_{\epsilon}; B) \leq P_{\eta|\omega'}(\eta\in A^*_{2\epsilon}) = G'(A^*_{2\epsilon})=0$, since $G'$ is supported on $\cS'$ and $A^*_{\epsilon}\cap S_{\epsilon}'=\emptyset$. In addition,
    \begin{align*}
        P_{\eta\times \doc|\omega}(\hat{\eta}\in A^*_{\epsilon}; B) &\geq P_{\eta\times \doc|\omega}(\hat{\eta}\in A; B) \\
        &\geq P_{\eta|\omega}(\eta\in A^*) - P_{\eta\times \doc|\omega}(B^c) \\
        &\geq G(A^*) - 2V\exp\left(-\frac{2n\epsilon^2}{V}\right)\\
        &\geq c\epsilon^{(1\vee\alpha)J-1} - 2V\exp\left(-\frac{2n\epsilon^2}{V}\right).
    \end{align*}
    Combined with equation \eqref{eq:proof_inv_bound_1}, we have
    \begin{align*}
        \left|P_{n,\omega}(\hat{\eta}\in A^*_{\epsilon}) - P_{n,\omega'}(\hat{\eta}\in A^*_{\epsilon})\right| &\geq c\epsilon_1^{(1\vee\alpha)J-1} - 6V\exp\left(-\frac{2n\epsilon^2}{V}\right)
    \end{align*}
    Noting that $\epsilon=d_{\cU\cH}(\rho,\rho')/4$, this concludes the proof. A similar argument works if (ii) was true above.

    \textbf{Why one of (i) or (ii) must hold: } 
    Since $d_{\cH}(\cS,\cS')= 4\epsilon_1$, there either exists $x\in \cS$ with $d(x,\cS')= 4\epsilon_1$ or $x'\in\cS'$ with $d(x',\cS)=4\epsilon_1$. Assume the former. Let $A^*=B(x,\epsilon_1)\cap \cS$. Clearly, $A^*\subset \cS$, also, since $d(x,\cS')=4\epsilon_1$, we have $A^*\cap \cS'=\emptyset$, hence $A^*\subset \cS\setminus \cS'$. Note that $\min_{y\in A^*} d(y, \cS') = 3\epsilon_1$. Hence, for any $\epsilon\leq\epsilon_1$, say $\epsilon=\epsilon_1$, we have $A^*_{\epsilon} \cap \cS_{\epsilon}'=\emptyset$. Finally, $G(A^*)=\sum_i \pi_i G_i(B(x,\epsilon)\cap \cS_i)$. Noting that $x\in \cS_i$ for some $i$, we get $G(A^*) \geq c_1 G(B(x,\epsilon_1)\cap S_i)\geq c_1 C(J,\alpha) \epsilon_1^{(1\vee\alpha)J-1}$, by application of Lemma \ref{lemma:lb_mass_small_ball}.
\end{proof}

\subsection{Proof of Theorem \ref{thm :parameter contraction rate}}
In this proof, we assume that all $\omega$ are in the support of the prior $\Pi$. This entails that for any sequence $\omega_n$, there is a convergent subsequence $\omega_{n_k}$ with $d_{\cU\cH}(\rho_{n_k}, \rho^*)\to 0$ (here $\omega=(\rho,\pi)$ for topic map $\rho$ and path probability $\pi$), where $\omega^*$ is also in the support (and thereby satisfies the properties assumed on the prior). We first show that as $n\to\infty$, the Hellinger distance between densities is lower bounded by a function of the union Hausdorff metric between the corresponding topic maps. This requires combining the inverse bound (local property, since we need $d_{\cU\cH}\leq \epsilon_0$ for this result), along with a global identifiability property (which we can also deduce by modifying the inverse bound proof).

Firstly, we note that for $\epsilon_0$ as in Theorem \ref{thm: inverse bound}, there exists $\delta_0>0$ such that for all $\omega$ in the support of $\Pi$ with  $d_{\cU\cH}(\rho, \rho_0)\geq \epsilon_0$, we have  $\liminf_{n\to \infty} h^2(p_{n,\omega_0}, p_{n,\omega})\geq \delta_0$. This can be seen by modifying the proof of the inverse bound Theorem \ref{thm: inverse bound} - in the very last step, where we used Lemma \ref{lemma:lb_mass_small_ball} under the assumption that $\epsilon_1<\epsilon_0$, if $\epsilon_1>\epsilon_0$, then we can lower bound $G(B(x,\epsilon_1)\cap \cS_i)\geq G(B(x, \epsilon_0)\cap \cS_i) \geq C\epsilon_0^{(1\vee \alpha)J - 1}$  (note that it is a uniform lower bound, not depending on the $d_{\cU\cH}$), which would show $\liminf_{n\to\infty} d_{\TV}(p_{n,\omega}, p_{n,\omega'}) \geq c\epsilon_0^{(1\vee \alpha)J - 1}$, as long as $d_{\cU\cH}(\rho,\rho')\geq \epsilon_0$. Since, $\sqrt{2} h \geq d_{\TV}$, we have for some $\delta_0>0$,
$$\liminf_{n\to\infty} \inf_{\substack{\omega\in\text{Supp}(\Pi)\\ d_{\cU\cH}(\rho, \rho_0)\geq \epsilon_0}} h^2(p_{n,\omega_0}, p_{n,\omega}) \geq \delta>0.$$

Now, for $\omega\in \text{Supp}(\Pi)$, if $d_{\cU\cH}(\rho,\rho_0)\leq \epsilon_0$, we can use Theorem \ref{thm: inverse bound}, to have a lower bound depending on the $d_{\cU\cH}$:
$$\sqrt{2}h(p_{n,\omega_0}, p_{n,\omega}) \geq C_1 d_{\cU\cH}(\rho,\rho_0)^{(1\vee\alpha)J-1} - 6V\exp(-n d_{\cU\cH}(\rho,\rho_0)^2 / 8V)$$
where the right side is further lower bounded by a constant multiple of $\epsilon^{(1\vee\alpha)J-1}$ if $\epsilon \leq d_{\cU\cH}(\rho,\rho_0) \leq \epsilon_0$ and $C_1\epsilon^{(1\vee\alpha)J-1} \gtrsim 6V\exp(-n\epsilon^2/8V)$, which is true is $\epsilon>\tilde{c}\sqrt{\log n/n}$. Thus, in such a case, $h^2(p_{n,\omega_0}, p_{n,\omega})\geq c'\epsilon^{2[(1\vee\alpha)J-1]}$. Combining the above cases, we obtain the following: for a sequence $\epsilon_n\to 0$,
$$\liminf_{n\to \infty} \inf_{\substack{\omega\in\text{Supp}(\Pi)\\d_{\cU\cH}(\rho,\rho_0)\geq \epsilon_n \\ \epsilon_n \gtrsim \sqrt{\log n/n}}} \frac{h^2(p_{n,\omega_0}, p_{n,\omega}) }{\epsilon_n^{2[(1\vee\alpha)J-1]}}\geq C'>0.$$

Hence, with $\epsilon_{m,n}$ as in the statement of the theorem, as $m,n\to\infty$, (which ensures $\epsilon_{m,n}\gtrsim \sqrt{\log n / n}$),
\begin{align*}
    \Pi\left(d_{\cU\cH}(\rho,\rho_0)\geq \epsilon_{m,n}\mid \corpus\right) &\leq \Pi\left(h^2(p_{n, \omega_0}, p_{n,\omega}) \geq C'\epsilon_{m,n}^{2[(1\vee\alpha)J-1]}\mid \corpus\right) \\
    &\leq \Pi\left(h(p_{n,\omega_0}, p_{n,\omega})\geq C_1'\epsilon_{m,n}^{(1\vee\alpha)J-1}\mid \corpus\right) \\
    &\leq \Pi\left(h(p_{n,\omega_0}, p_{n,\omega}\geq C_1'\left[\log (m\vee n) \left(\frac{1}{m} + \frac{1}{n}\right)\right]^{1/2}\mid \corpus\right)\to 0
\end{align*}
in probability, where the last convergence follows from density estimation rate Theorem \ref{thm: density contraction rate}.

\section{Additional Details about Simulations in Section \ref{sec:numerical exp}}\label{app:numerical exp}

\subsection*{Relation between $d_{\cU\cH}$ and $d_{L_2}$}

Recall the Hausdorff metric is defined as
$$d_{\cH}(A, B) = \max_{x\in A} d(x,B)\vee \max_{y\in B} d(y,A) = \inf \left\{\epsilon>0 : A\subset B_{\epsilon}, B\subset A_{\epsilon}\right\},$$
where $A_\epsilon = A + B(0,\epsilon) = \cup_{x\in A} B(x,\epsilon)$. Consider $\cS=\cup_{i\in[I]} \cS_i$ and $\cS'=\cup_{i\in[I]} \cS_i'$, where $\cS_i$ (and $\cS_i'$) are the component polytopes. Fix a permutation $\sigma$, wlog assume the identity permutation (otherwise relabel components of $\rho'$). Firstly
$$d_{\cH}(\cS, \cS') \leq \max \{d_{\cH}(\cS_i, \cS_i'):i\in[I]\}$$ 
since suppose $r$ is the max on the right side, then $\cS_i\subset \cS_i' + B(0,r)$ and $\cS_i'\subset \cS_i+B(0,r)$. Then, $\cS = \cup_i \cS_i \subset \cup_i \left(\cS_i'+B(0,r)\right) = \cS'+B(0,r)$ and also the converse, thereby $d_{\cH}(\cS,\cS')\leq r$. By \cite[Lemma 1(a)]{NguyenPosteriorContractionPopulation2015}, for convex polytopes we have $d_{\cH}(\cS_i, \cS_i') \leq d_M(\cS_i, \cS_i')$, where $d_M$ is the minimum-matching metric
\begin{equation}\label{eq: minimal matching distance}
    d_{M}(\cS_1, \cS_2) = \max_{\theta\in \extr \cS_1} \min_{\theta'\in \extr \cS_2} \norm{\theta-\theta'} \vee \max_{\theta'\in \extr \cS_2}\min_{\theta\in \extr \cS_1} \norm{\theta-\theta'}
\end{equation}
which in turn, for convex polytopes $\cS_1,\cS_2$, each with $J$ extreme points $\theta_1,\dots,\theta_J$ (and $\theta_1',\dots,\theta_J'$), satisfies
$$d_M(\cS_1, \cS_2) \leq \min_{\tau\in\bbS_J}\sum_j \norm{\theta_j- \theta_{\tau(j)}'}.$$
Hence, (as $\max_i a_i \leq \sum_i a_i$ for $a_i\geq 0$) we obtain
$$d_{\cH}(\cS,\cS') \leq \inf_{\sigma\in \bbS_I} \sum_i \min_{\tau_i\in\bbS_J} \sum_j \norm{\theta_{i,j} - \theta_{\sigma(i),\tau_i(j)}'}$$
and we recognize the right side to be precisely $d_{L_2}$. Hence, we have $d_{\cU\cH}(\rho,\rho')\leq d_{L_2}(\rho,\rho')$. Note that in the last display, the extreme points of the convex polytope $\cS_i$ are denoted as $\theta_{i,1},\dots,\theta_{i,J}$.


\begin{table}[]
\centering
\begin{tabular}{cc}

    \begin{minipage}{.4\linewidth}
        \centering
        \begin{tabular}{ccc}
\multicolumn{3}{c}{Experiment 1 ($V=10, K=5$)}                    \\
\multicolumn{1}{l}{}       & \multicolumn{1}{l}{}  & \multicolumn{1}{l}{}                        \\ \hline
\multicolumn{1}{|c|}{$m$} & $n=50$ & \multicolumn{1}{c|}{$n=100$} \\ 
\hline
\multicolumn{1}{|c|}{200}  & 27.40  & \multicolumn{1}{c|}{43.94}  \\
\multicolumn{1}{|c|}{318}  & 39.96  & \multicolumn{1}{c|}{67.65}  \\
\multicolumn{1}{|c|}{503}  & 61.57  & \multicolumn{1}{c|}{107.64} \\
\multicolumn{1}{|c|}{796}  & 95.25  & \multicolumn{1}{c|}{164.10} \\
\multicolumn{1}{|c|}{1261} & 142.16 & \multicolumn{1}{c|}{265.47} \\
\multicolumn{1}{|c|}{1996} & 231.18 & \multicolumn{1}{c|}{406.71} \\
\multicolumn{1}{|c|}{3159} & 349.60 & \multicolumn{1}{c|}{626.76} \\
\multicolumn{1}{|c|}{5000} & 554.15 & \multicolumn{1}{c|}{979.62} \\ \hline
\end{tabular}
\vspace{1mm}

    \end{minipage} &

    \begin{minipage}{.4\linewidth}
        \centering
        \begin{tabular}{ccc}
\multicolumn{3}{c}{Experiment 2 ($V=30, K=7$)}                    \\ 
\multicolumn{1}{l}{}       & \multicolumn{1}{l}{}  & \multicolumn{1}{l}{}                        \\ \hline
\multicolumn{1}{|c|}{$m$} & $n=150$ & \multicolumn{1}{c|}{$n=300$} \\ \hline
\multicolumn{1}{|c|}{400}  & 146.66  & \multicolumn{1}{c|}{259.09}  \\
\multicolumn{1}{|c|}{711}  & 259.63  & \multicolumn{1}{c|}{439.72}  \\
\multicolumn{1}{|c|}{1264}  & 433.12  & \multicolumn{1}{c|}{762.77} \\
\multicolumn{1}{|c|}{2249}  & 747.27  & \multicolumn{1}{c|}{1342.87} \\
\multicolumn{1}{|c|}{4000} & 1320.04 & \multicolumn{1}{c|}{2610.74} \\\hline
\end{tabular}
\vspace{2mm}
    \end{minipage} 
    
\end{tabular}
\caption{Average run time (in seconds) for the Gibbs sampler for experiments 1 (left) and 2 (right). }
\label{table: run time}
\end{table}

\subsection*{Runtime}
The running time for the Gibbs sampler in experiments 1 and 2 are shown in Table \ref{table: run time}. For a choice of $(m,n)$, we recall that the corpus had $mn$ words in total. For $m=5000, n=100$ (total $5\times 10^5$ words in the corpus) case in experiment 1, total time required was around 16 minutes (about $0.0000356$ seconds per iteration per document). For $m=4000, n=300$ (total $1.2\times 10^6$ words in the corpus) case in experiment 2, total time required was around 43 minutes (about $0.0001186$ seconds per iteration per document). The run times shown in the table are for a single chain, in the simulations, 8 chains were run in parallel on a 2.9 GHz Quad-Core Intel Core i7 processor machine. Apart from the total size of the corpus, the run time also depends on the vocabulary size $V$ (rather small here in our toy experiments) and the size of the DRT — for experiment 1, the DRT has two paths while for experiment 2, the DRT has four paths.

\subsection{DRTs for Experiment 3}

We use a total of 8 different DRTs (including LDA with true total number of topics) - these are shown in Figure \ref{fig: exp3 DRTs}. The tree \textit{tree0} in the figure is the true DRT responsible for the data generation in this experiment. The results obtained from this experiment are discussed in the main text, see Section \ref{sec:numerical exp}. We note that the first 5 DRTs all have $K=5$ (which is the actual total number of topics for this experiment), while the last 3 DRTs have more - however, the true DRT \textit{tree0} is a sub-tree of each of these last 3 trees. We have seen that while these tree have similar held-out log-likelihood compared to the first tree, one of the paths for these 3 trees have very small probability assigned to it. On the other hand, out of the first 5 DRTs, \textit{tree0} has a significantly higher held-out log-likelihood for each data instance.

\begin{remark}[Note on initialization]
    For each dataset, we ran the Gibbs sampler across 8 independent chains. Four of these chains were started randomly. The other four were initialized using a combination of usual LDA and hierarchical clustering. We provide a brief description of the latter method. Given the total number of topics $K$ based on the DRT, we fit a LDA model with $K$ topics and a small Dirichlet parameter $\alpha$ (to promote sparsity) - the results from this model provide the topics and also the mixing proportions for each document $\beta_i\in\Delta^{K-1}$. However, in our model, each document is associated to only $J$ topics (out of the $K$), where $J$ is the depth - hence, for each document, we chose the $J$ coordinates of the mixing probabilities with highest values - made the other coordinates 0 and normalized the vector, to obtain $\tilde{\beta}_i\in\Delta^{K-1}$ with $\norm{\tilde{\beta}_i}_0 = J$. Then we perform an agglomerative hierarchical clustering on these modified document-wise mixing probabilities. Finally, the sub-tree from this hierarchical clustering starting from the root, matching the desired DRT was chosen -- this provided the initialization for the collapsed Gibbs sampling for our model. Although a heuristic approach, in most of the cases, this initialization turned out to be better than random initialization, based on the log-likelihood at the end of the chains.
\end{remark}

\subsection{Additional Details about NYT Corpus Data Analysis}
We have already discussed pre-processing of the dataset. The training data has 641 documents with a vocabulary of $V=500$. We use 25 different DRTs (shown in Figure \ref{fig: nyt trees}) and also usual LDA with $K\in [2,12]$. Among the 25 DRTs used, we have all possible DRTs with $J=3, I\leq 5$  and a few others with $J=3,I=6$ and some with $J=4$. 

\begin{figure}
    \centering
    \includegraphics[width=\linewidth]{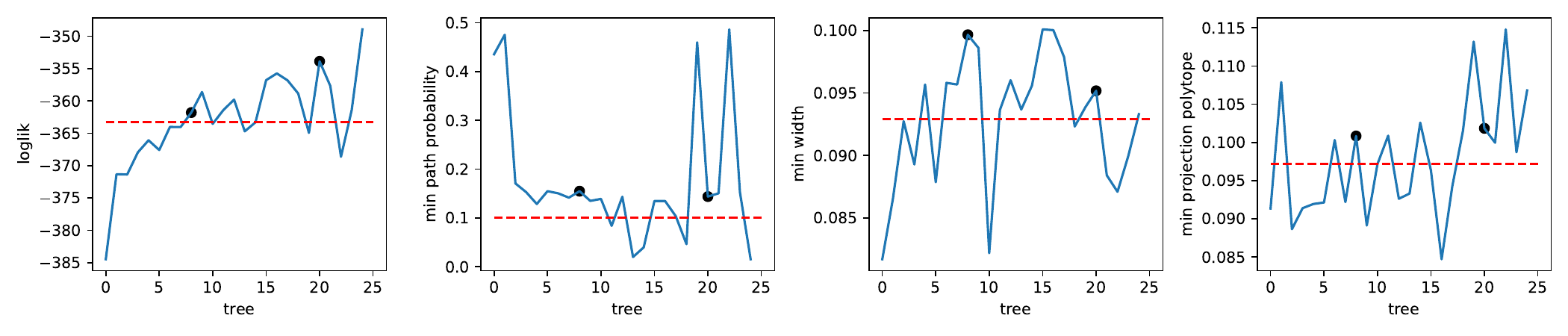}
    \caption{Visual representation of DRT-model selection for NYT data (corresponding to the results in Table \ref{tab: nyt DRTs}) - the red dashed lines are the cut-offs chosen and the black marked points are \textit{Tree8} and \textit{Tree20}, which passes all the filters.}
    \label{fig: nyt model selection}
\end{figure}

\begin{figure}
    \centering
    \includegraphics[width=\linewidth]{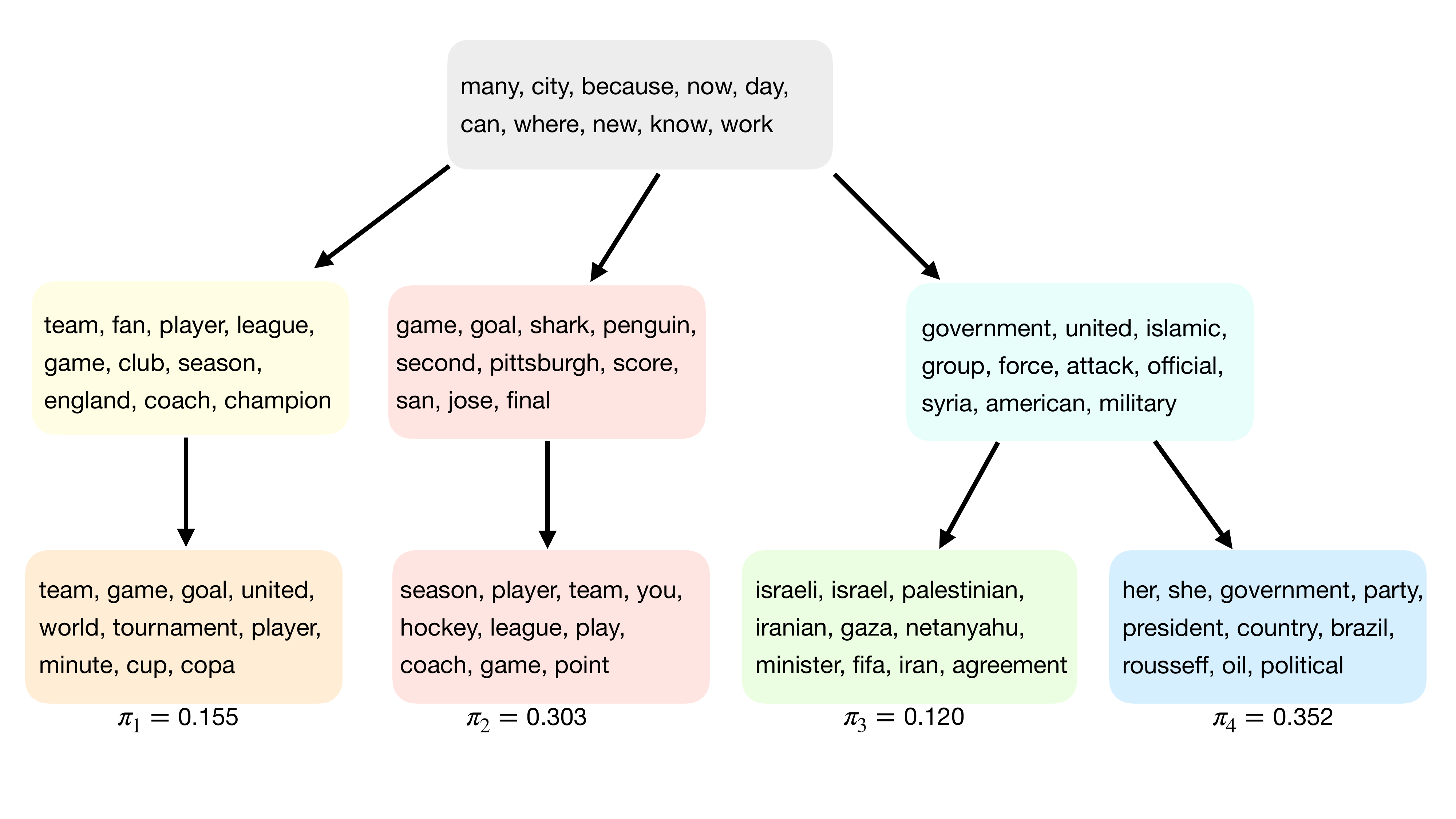}
    \caption{Topic hierarchy for Tree 8}
    \label{fig: nyt hierarchy 8}
\end{figure}

\subsubsection*{DRT Selection:} Model selection in this case was done using held-out log-likelihood along with few other geometric considerations from the model. Table \ref{tab: nyt DRTs} shows the results from the DRTs (where there are at least two component polytopes) and Table \ref{tab: nyt LDAs} shows the corresponding results for the LDA model. Note that since the LDA can be seen as special case with a DRT with $I=1$, there is only one topic polytope - hence the other columns in Table \ref{tab: nyt DRTs} do not make sense for LDA models. All the LDA models have lower held-out log likelihood compared to most of the DRT models. For the DRT models, first we filtered only models where the minimum path probability was more than $0.1$, then we further filtered to cases where the minimum width and the minimum projection distance of topics to other component polytopes was more than the median of all the values. This was to ensure that the components are well-separated and each is given sufficient weight. This left us with trees \textit{Tree8} and \textit{Tree20} (shown in Figure \ref{fig: nyt model selection}, with the latter having the better held-out log likelihood. We chose this tree as the most appropriate for this dataset. However, we also include \textit{Tree8} and \textit{Tree7} (which respects the topics structure based on the corpus news category knowledge) for comparison of the topics. This is not a very in-depth model selection process, which is not the focus of this paper - but, we should keep in mind that for practical applications, interpretation of topics are not often aligned with performance measures like perplexity. We already presented the topic hierarchies for \textit{Tree20} and \textit{Tree7} in the main text, here we include the estimated topic hierarchy from \textit{Tree8}, shown in Figure \ref{fig: nyt hierarchy 8}, which can be interpreted similar to \textit{Tree20} or \textit{Tree7}, as discussed in the paper.

\begin{figure}
    \centering
    \includegraphics[width=\linewidth]{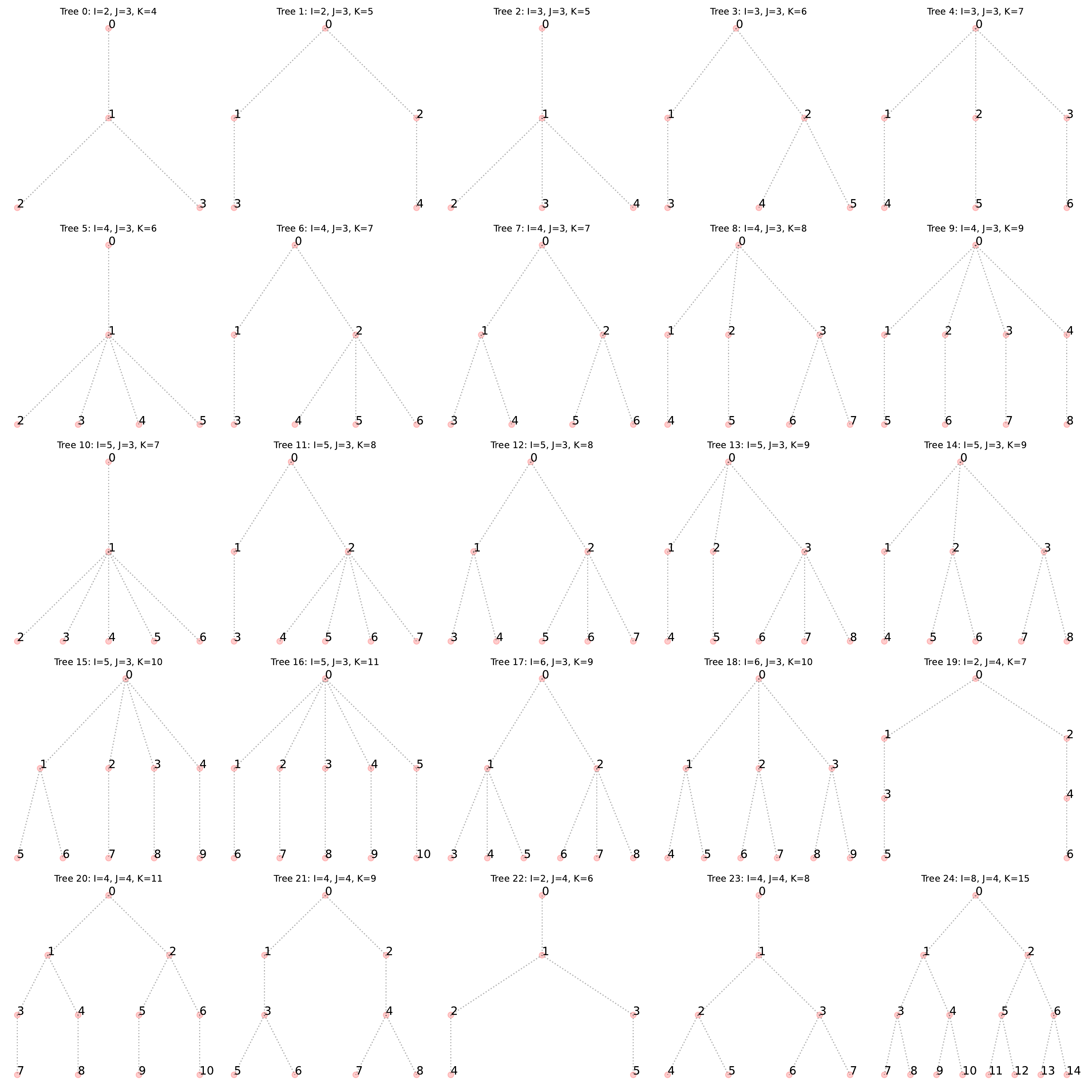}
    \caption{Different DRTs used for NYT dataset}
    \label{fig: nyt trees}
\end{figure}

\begin{table}[]
    \centering
    \begin{tabular}{rrrrrrrr}
Tree & loglik & min dist & min path & min width & min MM & min Gr & min proj \\
\hline
0 & -384.478119 & 0.095829 & 0.435556 & 0.081677 & 0.116825 & 1.413106 & 0.091344 \\
1 & -371.339691 & 0.097044 & 0.475556 & 0.086559 & 0.124632 & 1.967437 & 0.107878 \\
2 & -371.380646 & 0.098162 & 0.170732 & 0.092740 & 0.098162 & 0.827112 & 0.088639 \\
3 & -367.955231 & 0.105501 & 0.152993 & 0.089274 & 0.118787 & 1.088370 & 0.091389 \\
4 & -366.091553 & 0.110773 & 0.128603 & 0.095683 & 0.127080 & 1.470876 & 0.091933 \\
5 & -367.597961 & 0.100855 & 0.154867 & 0.087850 & 0.101692 & 0.838693 & 0.092132 \\
6 & -364.029053 & 0.107039 & 0.150442 & 0.095817 & 0.138978 & 1.346883 & 0.100305 \\
7 & -364.049042 & 0.109462 & 0.141593 & 0.095683 & 0.117186 & 1.070265 & 0.092187 \\
8 & -361.803009 & 0.110093 & 0.154867 & 0.099673 & 0.126911 & 1.296238 & 0.100837 \\
9 & -358.632141 & 0.115678 & 0.134956 & 0.098613 & 0.120622 & 1.463362 & 0.089122 \\
10 & -363.557709 & 0.093382 & 0.139073 & 0.082163 & 0.112251 & 0.869724 & 0.097159 \\
11 & -361.415192 & 0.106969 & 0.083885 & 0.093637 & 0.134366 & 1.307429 & 0.100872 \\
12 & -359.805176 & 0.108728 & 0.143488 & 0.096028 & 0.113834 & 1.082977 & 0.092621 \\
13 & -364.706207 & 0.107942 & 0.019868 & 0.093678 & 0.136534 & 1.140199 & 0.093300 \\
14 & -363.315582 & 0.110165 & 0.039735 & 0.095599 & 0.138494 & 1.101638 & 0.102580 \\
15 & -356.794220 & 0.117294 & 0.134658 & 0.100107 & 0.128460 & 1.058989 & 0.096342 \\
16 & -355.755402 & 0.106236 & 0.134658 & 0.100043 & 0.142102 & 1.645063 & 0.084705 \\
17 & -356.826508 & 0.110622 & 0.103524 & 0.097887 & 0.110622 & 0.973003 & 0.094247 \\
18 & -358.865204 & 0.108103 & 0.046256 & 0.092319 & 0.131394 & 1.065186 & 0.101542 \\
19 & -364.944122 & 0.108742 & 0.460000 & 0.093873 & 0.151502 & 2.445448 & 0.113184 \\
20 & -353.887177 & 0.113148 & 0.143805 & 0.095182 & 0.151048 & 1.769003 & 0.101841 \\
21 & -357.689148 & 0.106585 & 0.150442 & 0.088401 & 0.145929 & 1.375201 & 0.099952 \\
22 & -368.640930 & 0.100454 & 0.486667 & 0.087086 & 0.135501 & 2.027920 & 0.114793 \\
23 & -361.434509 & 0.105401 & 0.152655 & 0.089929 & 0.123659 & 1.075447 & 0.098708 \\
24 & -348.942963 & 0.113665 & 0.015351 & 0.093306 & 0.145915 & 0.623937 & 0.106803 \\
\end{tabular}

    \caption{Results for the different DRTs used: the columns are (i) index of the tree corresponding to Figure \ref{fig: nyt trees}, (ii) held-out log  likelihood per document, (iii) minimum distance between distinct topics, (iv) minimum path probability, (v) minimum width of the component polytopes, (vi) minimum minimum-matching distance among distinct component polytopes, (vii) minimum Grassmanian distance between affine hulls of distinct component polytopes and (viii) minimum projection distance of topics to other component polytopes, not sharing that topic.}
    \label{tab: nyt DRTs}
\end{table}

\begin{table}[]
    \centering
    \begin{tabular}{rrrr}
K & loglik & min topic distance & min width \\
\hline
2 & -397.795074 & 0.082901 & 0.082901 \\
3 & -389.209595 & 0.078906 & 0.067211 \\
4 & -378.412842 & 0.079232 & 0.065750 \\
5 & -376.222534 & 0.100496 & 0.084834 \\
6 & -375.433838 & 0.101611 & 0.085654 \\
7 & -375.300140 & 0.105880 & 0.092065 \\
8 & -374.194305 & 0.115331 & 0.092785 \\
9 & -373.980713 & 0.118708 & 0.078634 \\
10 & -381.322723 & 0.132685 & 0.100577 \\
11 & -374.463074 & 0.125991 & 0.092925 \\
12 & -376.339691 & 0.137340 & 0.099181 \\
\end{tabular}
    \caption{Results for LDA fit on NYT data with different number of topics ($K$): computed the (i) held-out log likelihood per document, (ii) minimum distance between distinct topics and (iii) minimum width of the topic polytope.}
    \label{tab: nyt LDAs}
\end{table}
\end{appendices}

\end{document}